\newtheorem{theorem}{Theorem}[section]
\newtheorem{claim}{}[theorem]
\newtheorem{lemma}[theorem]{Lemma}
\newtheorem{corollary}[theorem]{Corollary}
\newtheorem{conjecture}[theorem]{Conjecture}
\theoremstyle{definition}
\newcommand{\cR}{\mathcal{R}}
\newcommand{\cF}{\mathcal{F}}
\newcommand{\cM}{\mathcal{M}}
\newcommand{\cS}{\mathcal{S}}
\newcommand{\cT}{\mathcal{T}}
\newcommand{\cU}{\mathcal{U}}
\newcommand{\cX}{\mathcal{X}}
\newcommand{\cY}{\mathcal{Y}}
\newcommand{\cV}{\mathcal{V}}
\newcommand{\cW}{\mathcal{W}}
\newcommand{\cH}{\mathcal{H}}
\newcommand{\fX}{\mathfrak{X}}
\DeclareMathOperator{\cl}{cl}
\DeclareMathOperator{\PG}{PG}
\DeclareMathOperator{\GF}{GF}
\DeclareMathOperator{\wt}{wt}
\newcommand{\elem}{\epsilon}
\newcommand{\del}{\setminus}
\newcommand{\con}{/}
\newcommand{\dcon}{\con}
\newcommand{\oldcon}{\con}
\newcommand{\fU}{\cU}
\title[Exponentially Dense Matroids]{Projective Geometries in Exponentially Dense Matroids. I}
\author[Geelen]{Jim Geelen}
\author[Nelson]{Peter Nelson}		
\begin{document}	
	\maketitle
	\begin{abstract}
	We show for each positive integer $a$ that, if $\cM$ is a minor-closed class of matroids not 
	containing all rank-$(a+1)$ uniform matroids, then there exists an integer $n$ such that either 
	every rank-$r$ matroid in $\cM$ can be covered by at most $r^n$ sets of rank at most $a$, or $\cM$ contains the
	$\GF(q)$-representable matroids for some prime power $q$, and every rank-$r$ matroid in $\cM$ can be 
	covered by at most $r^nq^r$ sets of rank at most $a$. This determines the maximum density of the matroids in $\cM$ 
	up to a polynomial factor. 
	\end{abstract}	
			
	\section{Introduction}
	
	If $M$ is a matroid, and $a$ is a positive integer, then $\tau_a(M)$ denotes the 
	\emph{$a$-covering number} of $M$, the minimum number of sets of rank at most $a$ 
	in $M$ required to cover $E(M)$. We will prove the following theorem: 
	
	\begin{theorem}\label{mainnice}
		Let $a$ be a positive integer. If $\cM$ is a minor-closed class of matroids, then either
		\begin{enumerate}
			\item $\tau_a(M) \le r(M)^{n_{\cM}}$ for all $M \in \cM$, or
			\item\label{mii} there is a prime power $q$ so that $\tau_a(M) \le r(M)^{n_{\cM}}q^{r(M)}$ 
			for all $M \in \cM$, 
				and $\cM$ contains all $\GF(q)$-representable matroids, or
			\item\label{miii} $\cM$ contains all rank-$(a+1)$ uniform matroids. 
		\end{enumerate}
	\end{theorem}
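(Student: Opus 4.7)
The proof proceeds by taking the contrapositive of (3): assume some integer $\ell$ satisfies $U_{a+1,\ell}\notin\cM$, and show that either $\tau_a(M)\le r(M)^{n_{\cM}}$ for all $M\in\cM$, or there is a prime power $q$ such that $\tau_a(M)\le r(M)^{n_{\cM}}q^{r(M)}$ for all $M\in\cM$ and $\cM$ contains every $\GF(q)$-representable matroid. The governing template is the Growth Rate Theorem of Geelen, Kung and Whittle, which delivers precisely this trichotomy for $\tau_1(M)=\epsilon(M)$ under the assumption that $\cM$ excludes some $U_{2,n}$; that theorem is the $a=1$ case of Theorem \ref{mainnice}.

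For $a\ge 2$ the Growth Rate Theorem cannot be invoked as a black box: excluding $U_{a+1,\ell}$ places no bound on the number of points on a rank-$(a-1)$ flat, so $\epsilon(M)$ may be unbounded while $\tau_a(M)$ is small. Rank-$a$ flats, rather than points, must therefore play the role of atoms, and the entire Growth-Rate-Theorem machinery must be rebuilt with $\tau_a$ in place of $\epsilon$.

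The plan has three phases. First, a \emph{regularisation} step: starting from $M\in\cM$ with large $\tau_a(M)$, extract a bounded-rank minor $N$ inside which the rank-$a$ flats have roughly the same point count, so that $\tau_a(N)$ matches $\epsilon(N)$ divided by the common flat size up to a polynomial factor. The enabling fact is that a rank-$(a+1)$ minor whose rank-$a$ flats vary too wildly in size contains $U_{a+1,\ell}$ as a minor (by a Ramsey-style argument among the flats), contradicting the hypothesis. Second, an \emph{extremal dichotomy}: apply a Growth-Rate-Theorem-style analysis to $N$, yielding either a polynomial upper bound on $\tau_a(N)$, or an exponential bound with base $q$ together with arbitrarily large projective-geometry minors $\PG(k-1,q)$; minor-closedness of $\cM$ then promotes these minors to the full class of $\GF(q)$-representable matroids. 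Third, a \emph{lift} step: transfer the bound from $N$ back to $M$, absorbing the loss into the polynomial factor $r(M)^{n_{\cM}}$.

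The main obstacle is the first phase: establishing, from the sole hypothesis $U_{a+1,\ell}\notin\cM$, a minor-closed-class theory for rank-$a$ flats --- analogues of density-increment, Ramsey, and projective-extraction arguments working at the flat level rather than at the point level --- before the Growth-Rate skeleton can be reused. I expect this flat-level machinery to consume the bulk of the paper, with phase~(iii)'s polynomial-factor bookkeeping a secondary but routine source of work once the phase~(i) machinery is in place.
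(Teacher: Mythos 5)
There is a genuine gap, in fact two. First, your ``enabling fact'' for the regularisation phase is false: excluding $U_{a+1,\ell}$ places no restriction whatsoever on the point counts of rank-$a$ flats, and a rank-$(a+1)$ matroid whose rank-$a$ flats vary wildly in size need not contain $U_{a+1,\ell}$. For example, $U_{a,N}\oplus U_{1,1}$ has rank $a+1$, one rank-$a$ flat with $N$ points and others with only $a$ points, yet it has no $U_{a+1,\ell}$-minor for any $\ell\ge a+2$ (every $(a+1)$-element subset avoiding the coloop is dependent, and contraction drops the rank below $a+1$). For the same reason the intended link between $\tau_a(N)$ and $\epsilon(N)$ divided by a ``common flat size'' cannot be established: $\epsilon$ can be arbitrarily large while $\tau_a$ is tiny, so no regularisation of point counts can convert point-counting bounds into $\tau_a$ bounds. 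The paper's substitute for points is not flats of uniform size but \emph{thick} rank-$a$ sets (sets that cannot be covered by few sets of smaller rank, Section~\ref{u1s1}), together with weighted ($d$-minimal) covers; the relevant fact is that a sufficiently thick set of rank exceeding $a$ yields a $U_{a+1,b}$-minor (Lemma~\ref{thickuniform}), which is quite different from your Ramsey claim about flat sizes.

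Second, your phase two --- ``apply a Growth-Rate-Theorem-style analysis'' to the regularised minor --- is precisely the content that needs to be proved, and it is not reducible to rerunning the $a=1$ argument on a bounded-rank minor. The paper proves a quantitative threshold theorem (Theorem~\ref{halfwaypoint}): if $M$ has no $U_{a+1,b}$-minor and $\tau_a(M)\ge r(M)^m q^{r(M)}$, then $M$ has a $\PG(n-1,q')$-minor with $q'>q$. Its proof occupies Sections~\ref{u1s1}--\ref{u1s7} and runs through new intermediate structures (firm and scattered families, and ``pyramids'' of thick rank-$a$ sets), with the Geelen--Kabell theorems for $a=1$ entering only as a base case inside Lemma~\ref{easycase}, not as a template applied to a regularised minor. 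Your outer skeleton (negate outcome (3), obtain $\cM\subseteq\cU(a,b)$, and then get either a polynomial bound or an exponential bound plus all $\GF(q)$-representable matroids via excluded projective geometries and minimality of $q$) does match the paper's derivation of Theorem~\ref{mainnice} from Theorem~\ref{halfwaypoint}, but as it stands your proposal asserts rather than proves the dichotomy, and the one concrete mechanism you do propose for handling general $a$ does not work.
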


	Here, $n_{\cM}$ denotes an integer constant depending only on $\cM$.  In [\ref{part2}], the second author will 
	refine the bound $r(M)^{n_{\cM}}q^{r(M)}$ in (\ref{mii}) by a polynomial factor to $c_{\cM} q^{r(M)}$ for some 
	constant $c_{\cM}$; it is routine to show that this improved bound is best-possible up to a constant factor. 
	Both these results also appear in [\ref{thesis}]. 
	
	The above theorem and its improvement in [\ref{part2}] are contained in the following larger conjecture of 
	the first author [\ref{openprobs}]:
	
	\begin{conjecture}[Growth Rate Conjecture]\label{grc}
		Let $a \ge 1$ be an integer. If $\cM$ is a minor-closed class of matroids, then either
		\begin{enumerate}
			\item $\tau_a(M) \le c_{\cM}r(M)$ for all $M \in \cM$, or
			\item $\tau_a(M) \le c_{\cM}r(M)^2$ for all $M \in \cM$, and $\cM$ contains all graphic matroids or
				all bicircular matroids, or
			\item there is a prime power $q$ such that $\tau_a(M) \le c_{\cM}q^{r(M)}$ for all $M \in \cM$ 
				and $\cM$ contains all $\GF(q)$-representable matroids, or
			\item\label{mciv} $\cM$ contains all rank-$(a+1)$ uniform matroids. 
		\end{enumerate}
	\end{conjecture}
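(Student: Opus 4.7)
The plan is to combine Theorem~\ref{mainnice} with the polynomial-factor improvement announced for [\ref{part2}], and then to refine the output by a structural analysis in the spirit of the classical Growth Rate Theorem of Geelen, Kung and Whittle for $a=1$. First I would apply Theorem~\ref{mainnice}. Its case (3) gives case (4) of the conjecture verbatim; its case (2), together with the refinement $\tau_a(M)\le c_\cM q^{r(M)}$ announced in [\ref{part2}], yields case (3) of the conjecture. Everything therefore reduces to case (1) of Theorem~\ref{mainnice}: one assumes $\tau_a(M)\le r(M)^{n_\cM}$ for all $M\in\cM$ and that $\cM$ contains no $\GF(q)$-representable matroids of arbitrarily high rank for any prime power $q$, and must show that either $\tau_a(M)=O(r(M))$, or $\tau_a(M)=O(r(M)^2)$ together with all graphic or all bicircular matroids as minors.

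The main step is a reduction to the case $a=1$, where the Growth Rate Theorem is available. Concretely, I would aim to prove a lemma of the following shape: there are functions $f,g:\bN\to\bN$ such that if $M\in\cM$ and $\tau_a(M)\ge k\cdot r(M)$ with $k$ large, then $M$ has a minor $N$ with $r(N)\ge r(M)/f(a,k)$ and $|\si(N)|\ge g(a,k)\cdot r(N)$; and analogously in the super-quadratic regime. Candidate minors $N$ arise by contracting a carefully chosen rank-$(a-1)$ flat inside each of many pairwise skew rank-$(a+1)$ flats witnessing the covering lower bound, and then simplifying. Once such a reduction is in place, applying the $a=1$ theorem to the minor-closed hull of $\{\si(N):N\text{ a minor of some }M\in\cM\}$ delivers all graphic, all bicircular, or all $\GF(q)$-representable matroids as minors of $\cM$; the last possibility is excluded by the standing assumption, and the first two transfer upward to $\cM$ because the class is minor-closed.

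The hard part, which I expect to be the main obstacle, is the reduction itself. The $a$-covering number is not well behaved under contraction: a single contraction can decrease $\tau_a$ by an uncontrolled amount, so flats to be contracted must be chosen globally rather than greedily. A likely intermediate step is a Ramsey-type statement asserting that any matroid of high $\tau_a$ relative to its rank either contains a large projective geometry, or contains a long book of pairwise skew rank-$(a+1)$ flats in sufficiently general position to allow simultaneous contraction without merging. The first outcome is forbidden by our assumption, and the second gives exactly the room needed to pass to the $a=1$ setting with the $a$-covering density faithfully recorded by the point count. Establishing such a Ramsey statement with ranks polynomial in the target parameters appears to require new matroid structure theory beyond what enters Theorem~\ref{mainnice}, and this is where the bulk of the work would lie.
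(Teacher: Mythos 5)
The statement you are addressing is Conjecture~\ref{grc}, which the paper explicitly presents as an \emph{open conjecture} of the first author; the paper proves only Theorem~\ref{mainnice}, the polynomial-versus-exponential part of the dichotomy, and even there the polynomial regime is controlled only by $r(M)^{n_{\cM}}$ rather than by $c_{\cM}r(M)$ or $c_{\cM}r(M)^2$. So there is no proof in the paper to compare against, and your proposal does not supply one: the entire content of the conjecture beyond Theorem~\ref{mainnice} is concentrated in the ``reduction to $a=1$'' lemma that you describe but do not prove, and which you yourself flag as requiring new structure theory. That lemma --- produce from a matroid with $\tau_a(M)\ge k\,r(M)$ (resp.\ $\ge k\,r(M)^2$) a minor $N$ of comparable rank with $\tau_1(N)\ge g(a,k)\,r(N)$ (resp.\ $\ge g(a,k)\,r(N)^2$), without introducing projective-geometry minors not already present --- is precisely the open problem. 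Your candidate construction (contract a rank-$(a-1)$ flat inside each of many pairwise skew rank-$(a+1)$ flats, then simplify) does not obviously work: the contractions must be performed simultaneously, and after contracting one such flat the images of the others need not remain skew, need not retain their rank, and the resulting points can collapse into few parallel classes, so the lower bound on $\tau_a$ is not faithfully recorded by the point count of the simplification. Nothing in the paper's machinery (pyramids, firmness, scatteredness) distinguishes linear from quadratic growth; those tools are calibrated only to separate $r^{O(1)}$ from $q^r$.

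A smaller issue: even in the exponential case your argument leans on the refinement $\tau_a(M)\le c_{\cM}q^{r(M)}$, which this paper only announces for the forthcoming [\ref{part2}]; within the present paper one has only the weaker bound of Theorem~\ref{mainnice}(\ref{mii}). In short, your outline correctly identifies where the difficulty lies, but the statement remains a conjecture, and the proposal is a research plan rather than a proof.
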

	
	When $a = 1$, the parameter $\tau_a(M)$ is just the number of points in $M$, sometimes
	written as $\elem(M)$, and (\ref{mciv}) corresponds to $\cM$ containing all simple rank-$2$ 
	matroids. The conjecture in this case was proved by 
	work of Geelen, Kabell, Kung and Whittle [\ref{gk},\ref{gkw},\ref{gw}], and stated in [\ref{gkw}]
	as the `Growth Rate Theorem'.
	
	For general $a$, if (\ref{mciv}) holds then there is no bound on $\tau_a(M)$ as a function of 
	$r(M)$ for all $M \in \cM$, as a rank-$(a+1)$ uniform matroid can require arbitrarily many 
	rank-$a$ sets to cover. Thus, we derive bounds on $\tau_a$ relative to some particular 
	rank-$(a+1)$ uniform matroid that is excluded as a minor. We prove Theorem~\ref{mainnice} 
	as a consequence of the following result:
	
	\begin{theorem}\label{halfwaypoint}
		For all integers $1 \le a < b$, $q \ge 1$ and $n \ge 1$, there exists
		an integer $m$ so that, if $M$ is a matroid
		of rank at least $2$ with no $U_{a+1,b}$-minor, 
		and $\tau_a(M) \ge r(M)^{m}q^{r(M)}$, 
		then $M$ has a $\PG(n-1,q')$-minor for some prime power $q' > q$. 
	\end{theorem}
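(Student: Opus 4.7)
I would proceed by induction on $a$. For the base case $a = 1$, we have $\tau_1(M) = \elem(M)$, and the hypothesis becomes $\elem(M) \ge r(M)^m q^{r(M)}$ with no $U_{2,b}$-minor. For $m$ sufficiently large, this density contradicts Kung's bound unless $M$ has a $U_{2, q+2}$-minor, and the exponentially dense version of the Growth Rate Theorem [\ref{gkw}] then yields a $\PG(n-1, q')$-minor with $q' > q$.

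For the inductive step, assume the theorem for $a - 1$. Given $M$ with no $U_{a+1, b}$-minor and $\tau_a(M) \ge r(M)^m q^{r(M)}$, I would select a point $f \in E(M)$ and pass to the contraction $M/f$, aiming to apply the induction hypothesis at level $a - 1$ with modified parameters $(a-1, b', q, n)$. Two things must be verified. First, for a suitable Ramsey-type constant $b' = b'(a, b)$, $M/f$ should have no $U_{a, b'}$-minor: a $U_{a, b'}$-minor in $M/f$ lifts to $b'$ elements of $M$ whose $(a+1)$-subsets each have rank $a$ or $a+1$ in $M$, and a $2$-coloring argument on these subsets (``independent'' versus ``collapsed into a rank-$a$ flat'') either produces a $U_{a+1, b}$-minor of $M$, contradicting the hypothesis, or forces $b$ elements to lie in a common rank-$a$ flat --- a case that requires separate handling. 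Second, $\tau_{a-1}(M/f) \ge r(M/f)^{m'} q^{r(M/f)}$ must hold for some $m' = m - O(1)$: rank-$\le a$ flats of $M$ containing $f$ project under contraction to rank-$\le (a-1)$ flats of $M/f$, so a pigeonhole argument on a minimum rank-$a$ cover of $M$ should yield an $f$ through which super-exponentially many cover flats pass.

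The main obstacle is simultaneously satisfying both conditions on $f$. Naive averaging gives density preservation up to a polynomial factor per inductive step, which is tolerable, but the Ramsey constant $b'$ grows rapidly in $a$, and the ``collapsed'' case of the Ramsey argument --- where $b$ elements all lie in a common rank-$a$ flat of $M$ --- is not directly excluded by the hypothesis. Handling this case likely requires a preliminary reduction to a simple, vertically well-connected $M$ with controlled flat sizes, or a refined choice of $f$ that exploits the $U_{2, q+2}$-minor structure that the density hypothesis already forces via Kung's theorem.
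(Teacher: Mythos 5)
Your plan founders exactly at the step you flag as ``requiring separate handling,'' and that case is not a corner case --- it is the whole problem. There is no Ramsey constant $b'=b'(a,b)$ for which ``no $U_{a+1,b}$-minor in $M$'' yields ``no $U_{a,b'}$-minor in $M/f$'': excluding $U_{a+1,b}$ places no bound at all on rank-$a$ uniform minors, since arbitrarily large rank-$a$ sets (huge ``thick'' flats, long lines when $a=2$, etc.) are perfectly consistent with the exclusion. For instance $U_{a,N}\oplus M_1$ has no $U_{a+1,b}$-minor for any $b\ge a+2$ whatever $N$ is, and every contraction by a point of $M_1$ retains a $U_{a,N}$-minor; more to the point, in the extremal matroids the quantity $\tau_a$ is carried precisely by such huge rank-$a$ sets, so the ``collapsed'' branch of your two-colouring is the generic outcome and cannot be removed by a cleverer choice of $f$ or a preliminary connectivity reduction. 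Consequently the induction hypothesis at level $a-1$ is simply never applicable to $M/f$, and the induction does not get started. (Incidentally, the second condition you worry about is the easy one: for \emph{any} nonloop $f$, a cover of $M/f$ by sets of rank at most $a-1$ lifts to a cover of $M$ by sets of rank at most $a$, so $\tau_{a-1}(M/f)\ge\tau_a(M)-1$ with no pigeonhole needed; the sole obstruction is the minor exclusion.)

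This is precisely the difficulty the paper's machinery is built to circumvent: it never attempts to exclude $U_{a,b'}$ in a contraction. Instead, the role of that exclusion is played by quantitative invariants attached to \emph{collections} of thick rank-$a$ sets --- $d$-thickness, $d$-firmness, $d$-minimal weighted covers and scatteredness --- and the induction on $a$ occurs only inside Lemma~\ref{easycase}, after Lemma~\ref{findskew} has produced a single point $e$ lying in the closures of many of these thick sets; contracting that $e$ converts them into thick rank-$(a-1)$ sets and firmness, not a uniform-minor exclusion, is what gets passed to the inductive call. Globally, the theorem is reduced to finding a pyramid minor (Corollary~\ref{halfwayreduction}), which is then ``upgraded'' (Lemmas~\ref{pyramidbootstrap} and~\ref{pyramidpromote}) until either a $\PG(n-1,q')$-minor appears or a $d$-thick set of rank greater than $a$ is found, the latter contradicting the $U_{a+1,b}$-exclusion via Lemma~\ref{thickuniform}. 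Your base case matches the paper's (Theorems~\ref{gkpoly} and~\ref{gkexp}, from [\ref{gk}]), but as stated the inductive step has a genuine gap at its central point, and filling it requires structures of roughly the sophistication of the thick-set/firmness/pyramid apparatus rather than a refined choice of $f$.
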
	
	
	Our proof is loosely based on ideas in [\ref{gk}], and uses its main results as a base case. 
	The next five sections are used to define the terminology and intermediate structures we need, 
	and the bulk of the argument rests on the lemmas in Sections~\ref{u1s5},~\ref{u1s6} and~\ref{u1s7}.

\section{Preliminaries}

	We follow the notation of Oxley [\ref{oxley}]. Two sets $X$ and $Y$ are \emph{skew} in a matroid $M$ 
	if $r_M(X \cup Y) = r_M(X) + r_M(Y)$, and a collection of sets $\cX$ in $M$ is \emph{mutually skew} if 
	$r_M\left(\cup_{X \in \cX} X \right) = \sum_{X \in \cX} r_M(X)$. Very often, the atomic objects in our 
	proof are sets in $M$ rather than elements; to this end, we also define some new notation. 
	
	A common object is a collection of sets of the same rank. If $M$ is a matroid, and $a \ge 1$ is an integer, then
	$\cR_a(M)$ denotes the set $\{X \subseteq E(M): r_M(X) = a\}$. 
	
	Generalising the notion of parallel elements, if $X,X' \subseteq E(M)$, then we write $X \equiv_M X'$ if 
	$\cl_M(X) = \cl_M(X')$; we say that $X$ and $X'$ are \textit{similar} in $M$. We
	write $[X]_M = \{X' \subseteq E(M): X \equiv_M X'\}$ for the `similarity class' of $X$ in $M$. 
	
	We also extend existing notation in straightforward ways. If $\cX \subseteq 2^{E(M)}$ is a collection of sets, 
	then we write 	$M|\cX$ for $M|(\cup_{X \in \cX} X)$, $\cl_M(\cX)$ for $\cl_M(\cup_{X \in \cX} X)$, 
	and $r_M(\cX)$ for 
	$r(M|\cX)$. Two sets $\cX, \cX' \subseteq 2^{E(M)}$ are \textit{similar} in $M$ if $\cl_M(\cX) =\cl_M(\cX')$. 
	
	Analogously to the notion of a simple matroid, we say that $\cX \subseteq 2^{E(M)}$ is \emph{simple} in $M$ if the
	sets in $\cX$ are pairwise dissimilar in $M$. Note that any collection of flats of $M$ is simple. We write
	$\elem_M(\cX)$ for the maximum size of a subset of $\cX$ that is simple in $M$, or
	equivalently the number of different similarity classes of $2^{E(M)}$ containing a set in $\cX$. If $\cX$ 
	just contains nonloop singletons, then $\elem_M(\cX) = \tau_1(M|\cX)$.

	For integers $a$ and $b$ with $1 \le a < b$, we write $\cU(a,b)$ for the class of matroids 
	with no $U_{a+1,b}$-minor. 
	The first tool in our proof is a theorem of Geelen and Kabell [\ref{gkb}], which shows that the 
	parameter $\tau_a$ is bounded as a function of rank across $\cU(a,b)$. 
	
	\begin{theorem}\label{kdensity}
			Let $a$ and $b$ be integers with $1 \le a < b$. If $M \in \cU(a,b)$ satisfies $r(M) > a$,
			then $\tau_a(M) \le \binom{b-1}{a}^{r(M)-a}$. 
		\end{theorem}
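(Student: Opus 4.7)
The plan is to induct on $r(M)-a$, reducing everything to the key base case $r(M)=a+1$. For the inductive step with $r(M) \ge a+2$, I would pick any non-loop element $e \in E(M)$ and apply the inductive hypothesis to $M/e \in \cU(a,b)$, whose rank $r(M)-1$ still exceeds $a$. This yields a collection $\cC$ of at most $\binom{b-1}{a}^{r(M)-1-a}$ rank-$\le a$ sets covering $E(M/e)$. For each $X \in \cC$, the set $Y_X := \cl_M(X \cup \{e\})$ has rank at most $a+1$ in $M$, and the family $\{Y_X\}_{X \in \cC}$ collectively covers $E(M)$: each $Y_X$ contains $e$, and for $f \ne e$ we have $f \in X \subseteq Y_X$ for some $X$. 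Applying the base case to each $M|Y_X$ would then produce a rank-$\le a$ cover of $Y_X$ of size at most $\binom{b-1}{a}$, and combining gives
\[
\tau_a(M) \;\le\; |\cC| \cdot \binom{b-1}{a} \;\le\; \binom{b-1}{a}^{r(M)-a}.
\]

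For the base case $r(M) = a+1$, the target is $\tau_a(M) \le \binom{b-1}{a}$. I would pick $S \subseteq E(M)$ maximal subject to $M|S \cong U_{a+1,|S|}$; equivalently, every $(a+1)$-subset of $S$ is independent. Any basis of $M$ witnesses that such $S$ exists with $|S| \ge a+1$, while the hypothesis $M \in \cU(a,b)$ forbids $|S| \ge b$ (as that would give a $U_{a+1,b}$-restriction), so $a+1 \le |S| \le b-1$. I then claim the $\binom{|S|}{a}$ rank-$a$ flats $\{\cl_M(A) : A \subseteq S,\ |A|=a\}$ cover $E(M)$. Elements of $S$ are covered tautologically, and for $f \in E(M)\setminus S$, maximality forces $M|(S \cup \{f\}) \not\cong U_{a+1,|S|+1}$, so $S \cup \{f\}$ contains a dependent $(a+1)$-subset. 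This subset cannot lie entirely in $S$ (every $(a+1)$-subset of $S$ is independent), so it must contain $f$, yielding $f \in \cl_M(A)$ for some $a$-subset $A \subseteq S$.

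The substantive content is the base case: the trick is to choose a maximal ``uniform-like'' restriction $S$ so that both the bound $|S| \le b-1$ and the maximality condition translate directly into the covering property. Once that step is in place, the inductive contraction is routine and the bound telescopes to the desired $\binom{b-1}{a}^{r(M)-a}$.
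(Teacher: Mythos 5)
Your proof is correct and follows essentially the same route as the paper: the identical base case (a maximal set $S$ with $M|S \cong U_{a+1,|S|}$, bounded by $b-1$, whose $a$-subsets span everything outside $S$), and the same inductive step of contracting a nonloop and covering each resulting rank-$(a+1)$ set via the base case. The only cosmetic point is that some $Y_X$ may have rank at most $a$, in which case it needs no further covering rather than an appeal to the base case.
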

		\begin{proof}
			We first prove the result when $r(M) = a+1$, then proceed by induction. If $r(M) = a+1$, observe that 
			$M|B \cong U_{a+1,a+1}$ for any basis $B$ of $M$; let $X \subseteq E(M)$ be maximal such that
			$M|X \cong U_{a+1,|X|}$. We may assume that $|X| < b$, and by maximality of $X$ every $e \in E(M)-X$ 
			is spanned by a rank-$a$ set of $X$. Therefore, $\tau_a(M) \le \binom{|X|}{a} \le \binom{b-1}{a}$. 
			
			Suppose that $r(M) > a+1$, and inductively assume that the result holds for matroids of smaller rank. 
			Let $e$ be a nonloop of $M$. We have $\tau_{a+1}(M) \le \tau_a(M \con e) \le \binom{b-1}{a}^{r(M)-a-1}$ by induction, 
			and by the base case, each rank-$(a+1)$ set in $M$ admits a cover with at most $\binom{b-1}{a}$ sets of 
			rank at most $a$. Therefore $\tau_a(M) \le \binom{b-1}{a} \tau_{a+1}(M) \le \binom{b-1}{a}^{r(M)-a}$, 
			as required. 
		\end{proof}

	This theorem has two simple corollaries concerning the density of matroids in $\cU(a,b)$
	relative to that of their minors:
	\begin{lemma}\label{kdensitycon}
		Let $a$ and $b$ be integers with $1 \le a < b$. If $M \in \cU(a,b)$ and $C \subseteq E(M)$, then
		$\tau_a(M) \le \binom{b-1}{a}^{r_M(C)}\tau_a(M \con C)$.  
	\end{lemma}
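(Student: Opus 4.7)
The plan is to pull a covering of $M/C$ back to $M$, using Theorem~\ref{kdensity} to refine each pre-image into rank-$a$ sets. Let $\cA$ be an optimal covering of $M/C$ by rank-$a$ sets, so $|\cA| = \tau_a(M/C)$ and $\bigcup_{A \in \cA} A = E(M/C) = E(M) \setminus C$. Then $\bigcup_{A \in \cA}(A \cup C) = E(M)$, so it suffices to cover each $A \cup C$ cheaply in $M$.

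For each $A \in \cA$, the standard contraction rank formula gives $r_M(A \cup C) = r_{M/C}(A) + r_M(C) = a + r_M(C)$. The restriction $M|(A\cup C)$ is a minor of $M$, hence lies in $\cU(a,b)$, and (assuming $r_M(C)\ge 1$) has rank $a+r_M(C) > a$. Theorem~\ref{kdensity} applied to this restriction yields
\[
\tau_a\bigl(M|(A\cup C)\bigr) \le \binom{b-1}{a}^{r_M(C)}.
\]
Unioning these covers over all $A \in \cA$ gives a cover of $E(M)$ by at most $\binom{b-1}{a}^{r_M(C)} \tau_a(M/C)$ sets of rank at most $a$, as required.

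The only case not handled above is $r_M(C) = 0$, i.e.\ when $C$ is a set of loops. Here the bound reduces to $\tau_a(M) \le \tau_a(M/C)$, which holds trivially since any cover of $M/C = M \setminus C$ by rank-$a$ sets extends to a cover of $M$ by adjoining the loops of $C$ to a single block without increasing its rank. I do not expect any real obstacle: the argument is essentially just the contraction-rank identity plus Theorem~\ref{kdensity}, and the only thing that requires care is verifying that the pre-image $A \cup C$ really has rank $a + r_M(C)$ (rather than less) so that the hypothesis of Theorem~\ref{kdensity} is met.
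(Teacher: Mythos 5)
Your argument is correct and is essentially the intended one: the paper states this lemma as an immediate corollary of Theorem~\ref{kdensity} without giving a proof, and pulling back an optimal cover of $M \con C$ and refining each $M|(A\cup C)$ via that theorem is exactly the natural route. The worry you flag at the end is harmless: since $r_M(A\cup C)=r_{M \con C}(A)+r_M(C)\le a+r_M(C)$ always holds, any $A\cup C$ of rank greater than $a$ is covered by at most $\binom{b-1}{a}^{r_M(A\cup C)-a}\le\binom{b-1}{a}^{r_M(C)}$ sets of rank at most $a$ by Theorem~\ref{kdensity}, while any $A\cup C$ of rank at most $a$ can simply be kept as a single set of the cover, so the bound goes through in every case.
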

	
	\begin{lemma}\label{kdensityrel}
		Let $a$ and $b$ be integers with $1 \le a < b$. If $M \in \cU(a,b)$ and $N$ is a minor of $M$,
		then $\tau_a(M) \le \binom{b-1}{a}^{r(M)-r(N)}\tau_a(N)$. 
	\end{lemma}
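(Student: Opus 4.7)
The plan is to reduce Lemma~\ref{kdensityrel} to Lemma~\ref{kdensitycon} by isolating the contraction part of the minor. I would write $N = M \con C \del D$ for disjoint $C, D \subseteq E(M)$, choosing the representation so that $C$ is independent in $M$ with $|C| = r_M(C) = r(M) - r(N)$. Such a choice exists by extending a basis of $N$ (viewed as a subset of $E(M)$) to a basis of $M$ and letting $C$ be the added elements; with this choice, $N$ becomes a spanning restriction of $M \con C$, so $r(M \con C) = r(N)$ and $D \subseteq \cl_{M \con C}(E(N))$.

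Applying Lemma~\ref{kdensitycon} to this $C$ then gives
\[
\tau_a(M) \le \binom{b-1}{a}^{r_M(C)} \tau_a(M \con C) = \binom{b-1}{a}^{r(M)-r(N)} \tau_a(M \con C),
\]
so it suffices to prove $\tau_a(M \con C) \le \tau_a(N)$. Since every $d \in D$ lies in $\cl_{M \con C}(E(N))$, the strategy is to take a minimum rank-$\le a$ cover of $N$ and absorb each such $d$ into a cover set whose closure already contains it, producing a rank-$\le a$ cover of $M \con C$ of the same size.

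The main obstacle is this last absorption step. Although $d \in \cl_{M \con C}\!\left(\bigcup_i X_i\right)$ for the cover sets $X_1, \ldots, X_t$ of $N$, the element $d$ need not lie in the closure of any single $X_i$, so a direct absorption can fail. Overcoming this—either by exploiting the hypothesis $M \in \cU(a,b)$ to limit how the dependence of $d$ on the cover can be spread out, or by re-choosing the cover of $N$ to make the elements of $D$ absorbable—is the technical heart of the argument. An alternative route is to induct on $r(M) - r(N)$, applying the single-element case of Lemma~\ref{kdensitycon} (contracting one nonloop of $C$ at a time) and passing to the inductive hypothesis for the minor $N$ inside $M \con e$; this cleanly yields the exponent $r(M) - r(N)$ once the deletion-only base case is handled.
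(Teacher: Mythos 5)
Your opening reduction is sound (the standard minor representation $N = M\con C\del D$ with $C$ independent and $D$ coindependent in $M\con C$ gives exactly $r_M(C)=r(M)-r(N)$ with $E(N)$ spanning $M\con C$), and it matches the route the paper intends: Lemmas~\ref{kdensitycon} and~\ref{kdensityrel} are offered without proof as ``simple corollaries'' of Theorem~\ref{kdensity}, the idea being to take an optimal cover of the minor, blow each covering set $X$ up to $\cl_M(X\cup C)$, and re-cover each such set using Theorem~\ref{kdensity}. But that argument only accounts for contraction, and the step you defer --- $\tau_a(M\con C)\le\tau_a(N)$ when $N$ is a spanning restriction of $M\con C$ --- is a genuine gap, not a technicality: the inequality is false, and with it the lemma as literally stated fails for deletion-minors. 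Take $a=1$, $b=5$, $M=U_{2,4}\in\cU(1,5)$, and let $N=U_{2,2}$ be obtained by deleting two elements; then $r(M)=r(N)$, so the claimed bound is $\tau_1(M)\le\binom{4}{1}^{0}\tau_1(N)=2$, while $\tau_1(M)=4$. The hypothesis $M\in\cU(a,b)$ gives no slack here, because the factor $\binom{b-1}{a}^{r(M)-r(N)}$ is $1$ when no rank is lost; so neither absorption, nor re-choosing the cover of $N$, nor your alternative induction on $r(M)-r(N)$ (whose deletion-only base case is precisely the false statement) can close the gap.

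What can be salvaged: for contraction-minors the statement is exactly Lemma~\ref{kdensitycon}, since $r_M(C)=r(M)-r(M\con C)$; if $r(N)\le a$ it holds because $\tau_a(N)=1$ and Theorem~\ref{kdensity} bounds $\tau_a(M)$ by $\binom{b-1}{a}^{r(M)-a}$ directly; and for an arbitrary minor the same theorem gives only the weaker form $\tau_a(M)\le\binom{b-1}{a}^{r(M)-a}\tau_a(N)$ when $r(M)>a$. So your instinct that the deleted elements are the crux is exactly right, but the proposal is not a proof, and the missing step cannot be proved in the stated generality; any complete argument must either restrict to contraction-minors or weaken the exponent.
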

	
	The next two theorems, also due to Geelen and Kabell, were proved in [\ref{gk}] to resolve the 
	`polynomial-exponential' part of the Growth Rate Theorem, both finding a large projective geometry 
	in a sufficiently dense matroid without some line as a minor:
	
	\begin{theorem}\label{gkpoly}
		There is an integer-valued function $f_{\ref{gkpoly}}(\ell,n)$ such that, for any integers $\ell \ge 2$ and
		$n \ge 2$, if $M \in \cU(1,\ell)$ satisfies $\tau_1(M) \ge r(M)^{f_{\ref{gkpoly}}(\ell,n)}$,
		then $M$ has a rank-$n$ projective geometry minor.
	\end{theorem}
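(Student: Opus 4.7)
The plan is to proceed by induction on $n$, with $f_{\ref{gkpoly}}(\ell, n)$ defined recursively and with Theorem~\ref{kdensity} together with Lemmas~\ref{kdensitycon}--\ref{kdensityrel} as the main density-transfer tools.

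For the base case $n = 2$, a rank-$2$ projective geometry minor is a $U_{2,q+1}$-minor with $q$ a prime power, so it suffices to produce a $U_{2,3}$-minor (the case $q = 2$). I claim that any matroid $M$ with no $U_{2,3}$-minor satisfies $\tau_1(M) \le r(M)$. Replacing $M$ by $\si(M)$ (which preserves both $\tau_1$ and the absence of a $U_{2,3}$-minor), we may assume $M$ is simple, so $\tau_1(M) = |E(M)|$. For any non-loop $e$, the absence of three collinear points in $M$ means $M/e$ contains no parallel pair and no loop, so $M/e$ is itself simple and still $U_{2,3}$-free; induction on $r(M)$ gives $|E(M)| = 1 + |E(M/e)| \le 1 + (r(M) - 1) = r(M)$. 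Hence $f_{\ref{gkpoly}}(\ell, 2) = 2$ suffices: whenever $r(M) \ge 2$, the hypothesis $\tau_1(M) \ge r(M)^2 > r(M)$ forces a $U_{2,3}$-minor.

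For the inductive step, assume the result for $n-1$. Given $M \in \cU(1, \ell)$ with $\tau_1(M) \ge r(M)^{f_{\ref{gkpoly}}(\ell, n)}$, my plan has three substeps. First, use an averaging argument (exploiting that every line of $M$ has at most $\ell - 1$ points, so contracting any single non-loop reduces $\tau_1$ by a factor of at most $\ell - 1$, while the rank drops by exactly one) to find a small contraction set $C$ with $\tau_1(M/C) \ge r(M/C)^{f_{\ref{gkpoly}}(\ell, n-1)}$. Second, apply the inductive hypothesis to $M / C$ to obtain a $\PG(n-2, q)$-minor $N'$ of $M/C$ for some prime power $q \ge 2$. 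Third, lift $N'$ to a $\PG(n-1, q)$-minor of $M$ by identifying an additional rank direction inside $M$ that completes $N'$ to a rank-$n$ projective geometry over the same field.

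The main obstacle is the third substep. There is no a priori guarantee that a rank-$n$ extension of $N'$ produces a projective geometry, let alone one over the same prime power $q$. My strategy is to exploit residual density: after fixing $N'$, the remaining elements of the ambient flat $\cl_M(E(N') \cup C)$ project (via appropriate further contraction) to points of $N' = \PG(n-2, q)$, and since $|\PG(n-2, q)| = (q^{n-1} - 1)/(q - 1)$ is bounded, pigeonhole forces many elements to share a common projection. Contracting within these preimages, or iterating the extension, should expose the next coordinate axis needed to assemble $\PG(n-1, q)$. Calibrating $f_{\ref{gkpoly}}(\ell, n)$ so that enough density survives both the initial contraction in (i) and the pigeonholing in (iii) should give $f_{\ref{gkpoly}}(\ell, n)$ growing polynomially in $n$; making substep (iii) rigorous, and in particular controlling the field size $q$ so that it persists from the inductive step into the extension, is where I expect the real work to lie.
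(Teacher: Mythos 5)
You should first note that the paper does not prove Theorem~\ref{gkpoly} at all: it is imported as a black box from Geelen and Kabell [\ref{gk}] (it is one of the two main inputs from that paper, alongside Theorem~\ref{gkexp}), and it carries essentially the full weight of the ``polynomial'' half of the Growth Rate Theorem. So your proposal is not an alternative route through the paper's argument; it is an attempt to reprove a deep cited result. Your base case is fine (a simple matroid with no $U_{2,3}$-minor has at most $r$ elements, by exactly the contraction argument you give), and substeps (i)--(ii) of your induction are routine density bookkeeping of the kind the paper itself does via Lemma~\ref{kdensitycon}.

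The genuine gap is substep (iii), and it is not a technical loose end -- it is the entire theorem. Knowing that $M/C$ has a $\PG(n-2,q)$-minor $N'$ and that $M$ is dense does not give you a handle for growing $N'$ by one rank: a rank-$n$ extension of $\PG(n-2,q)$ is a projective geometry only if \emph{every} line through the new point acquires exactly $q+1$ points and the whole extension is modular and $\GF(q)$-coordinatizable, and nothing in your pigeonhole step points toward this. Pigeonholing elements of $\cl_M(E(N')\cup C)$ by their projection onto the $(q^{n-1}-1)/(q-1)$ points of $N'$ only tells you that some fibre is large; that fibre sits inside a set of bounded rank (the preimage of a single point), and all the density there buys you, even after more work, is a long line -- it does not align the new elements with the $\GF(q)$-structure of $N'$, and the extension you get could just as well be a free or otherwise unstructured one with no larger geometry minor. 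There is also no mechanism controlling the field: the $q$ produced by the inductive hypothesis can change from step to step, and you cannot ``iterate the extension'' over a fixed field without an argument that the same $q$ recurs. This is precisely why the known proof in [\ref{gk}] does not proceed by extending a projective geometry one rank at a time; it builds different intermediate objects (long-line/pyramid-type structures, as echoed in Section~\ref{u1s3} of this paper, and round restrictions) and extracts the geometry at the end by a separate representability argument. As written, your plan assumes the hard step rather than proving it.
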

	
	\begin{theorem}\label{gkexp}
		There is a real-valued function $\alpha_{\ref{gkexp}}(\ell,n,q)$ so that, for any integers $q \ge 2$,
		$\ell \ge 2$ and $n \ge 1$, if $M \in \cU(1,\ell)$ satisfies $\tau_1(M) \ge \alpha_{\ref{gkexp}}
		(\ell,n,q) q^{r(M)}$, then $M$ has a $\PG(n-1,q')$-minor for some prime power $q' > q$. 
	\end{theorem}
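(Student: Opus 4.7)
The plan is to prove this by induction on $n$, using Theorem~\ref{gkpoly} as a base step that produces \emph{some} projective-geometry minor over \emph{some} prime power, and then a density-increment argument to upgrade the field size above $q$.

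The base case $n=1$ is trivial: a nonempty matroid has a $\PG(0,q'')$-minor for every prime power $q''$. For the inductive step I would choose the constant $\alpha = \alpha_{\ref{gkexp}}(\ell,n,q)$ large in terms of $\ell$, $n$, $q$, the polynomial-bound function $f_{\ref{gkpoly}}(\ell,m)$ for a parameter $m$ much larger than $n$, and $\binom{\ell-1}{1}=\ell-1$ (with $\alpha$ also inflated to handle small $r(M)$ by absorbing cases of bounded rank into the constant). Under the hypothesis $\tau_1(M) \ge \alpha q^{r(M)}$, the exponential density dominates any polynomial and hence exceeds $r(M)^{f_{\ref{gkpoly}}(\ell,m)}$, so Theorem~\ref{gkpoly} yields a $\PG(m-1,q^*)$-minor for some prime power $q^*$. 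If $q^* > q$, taking $m \ge n$ gives the desired $\PG(n-1,q^*)$-minor directly.

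It remains to handle the case $q^* \le q$. Here I would use Lemma~\ref{kdensityrel} and Lemma~\ref{kdensitycon} to pass to a minor $N$ in which the $\PG(m-1,q^*)$-minor is spanning and the density ratio $\tau_1(N)/q^{r(N)}$ still far exceeds a fixed constant. Inside $N$ the spanning projective-geometry skeleton has only $(q^*)^{r(N)} \le q^{r(N)}$ elements, while $\tau_1(N)$ is enormously larger, so many elements of $N$ lie ``above'' the skeleton. Projecting along well-chosen flats of the skeleton and averaging should then produce a minor of strictly smaller rank whose density ratio has been \emph{strictly amplified}. Iterating this increment while carefully bookkeeping using $\tau_1(M/C) \ge \tau_1(M)/(\ell-1)^{r_M(C)}$ eventually either boosts the prime power of the projective geometry past $q$ (whence the inductive hypothesis or the direct conclusion applies) or it drives the rank down to a small value while the density remains exponential in the original rank, contradicting the Kung-style upper bound of Theorem~\ref{kdensity}.

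The hardest step, and where the quantitative heart of the proof sits, is converting excess density over a $\GF(q^*)$-skeleton into an actual $\PG(n-1,q'')$-minor with $q'' > q$, rather than merely into a longer line. Producing a long line is comparatively easy, but upgrading a long line together with a projective-geometry skeleton to a full higher-rank projective geometry over a \emph{strictly larger} prime power requires a nontrivial matroid-theoretic construction, likely via induction on the ``defect'' $q - q^*$ or via a secondary extremal argument tailored to $\GF(q')$-representability. Managing the constants so that the increment can be iterated enough times without exhausting the density budget is the technical crux.
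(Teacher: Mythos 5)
This theorem is not proved in the paper at all: it is quoted as a known result of Geelen and Kabell (reference [2], ``Projective geometries in dense matroids''), so there is no in-paper argument to compare against. Judged on its own terms, your proposal has a genuine gap exactly where you say the ``quantitative heart'' sits. The opening moves are fine (absorbing bounded rank into $\alpha$ via Theorem~\ref{kdensity}, and invoking Theorem~\ref{gkpoly} to get some $\PG(m-1,q^*)$-minor), but the entire content of the theorem is the case $q^* \le q$, and there your ``density increment'' is asserted rather than proved. The specific claim that ``projecting along well-chosen flats of the skeleton and averaging'' produces a lower-rank minor whose ratio $\tau_1/q^{r}$ is \emph{strictly amplified} is not justified and is false for a naive choice of contraction: for $M \in \cU(1,\ell)$ one only has $\tau_1(M \con e) \ge \tau_1(M)/(\ell-1)$, so contracting a point multiplies the ratio by at least $q/(\ell-1)$, which is typically much less than $1$. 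Getting any amplification requires exploiting the excess density to find points whose contraction creates many coincidences (long lines), and controlling this through an induction is precisely the hard construction you defer. Likewise, ``upgrading a long line plus a $\GF(q^*)$-skeleton to a $\PG(n-1,q'')$ with $q''>q$'' is named as a to-do item, not carried out; the induction on $n$ you announce is never actually used to discharge it.

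For reference, the Geelen--Kabell proof does not run a skeleton/increment scheme at all. It builds a \emph{pyramid} (the same object this paper generalises in Section~6): a sequence of points $e_1,\dotsc,e_h$ such that each contraction collapses $q+1$ pairwise non-parallel points onto one, obtained by repeatedly locating a point whose contraction identifies many rank-one sets, which is where the hypothesis $\tau_1(M) \ge \alpha q^{r(M)}$ (as opposed to a polynomial bound) enters. One then passes to a suitable round minor in which every line has at least $q+2$ points and applies a Kung-type theorem to extract $\PG(n-1,q')$ with $q'>q$. So your proposal is not a proof, and its missing step is not a routine verification but the main theorem in disguise.
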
	
	
\section{Thickness and Firmness}\label{u1s1}
	
	Two density-related notions that will feature frequently in our proof are those of \textit{thickness} and
	\textit{firmness}, which we define and explain in this section.
	
	If $d$ is a positive integer and $M$ is a matroid, then $M$ is \textit{$d$-thick} if 
	$\tau_{r(M)-1}(M) \ge d$. A set $X \subseteq E(M)$ is \textit{$d$-thick in $M$} if $M|X$ is $d$-thick. 
	
	Note that every matroid is $2$-thick, and that thickness is monotone in the sense that if $d' \ge d$ and $M$ is 
	$d'$-thick, then $M$ is $d$-thick. The following lemma is fundamental and we use it freely and frequently in our
	proof. 
	
	\begin{lemma}\label{maintainthickness}
		Let $d \ge 1$ be an integer. If $M$ is a matroid, $N$ is a minor of $M$, and $X \subseteq E(N)$ is 
		$d$-thick in $M$, then $X$ is $d$-thick in $N$. 
	\end{lemma}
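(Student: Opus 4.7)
The plan is to reduce to the case of a single contraction and then argue that a small cover of $X$ by non-spanning subsets in $N$ automatically yields a small cover of $X$ by non-spanning subsets in $M$. First, writing $N = M \con C \del D$ with $C, D \subseteq E(M)$ disjoint and both disjoint from $X$, we have $N|X = (M \con C)|X$, since deletion of elements outside $X$ does not affect $M|X$. Thickness is an intrinsic property of the restricted matroid, so the $d$-thickness of $X$ in $N$ coincides with its $d$-thickness in $M \con C$; thus it suffices to treat the case $N = M \con C$.

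The crux is the implication: for any $Y \subseteq X$, if $r_M(Y) = r_M(X)$, then $r_N(Y) = r_N(X)$. I would prove this using the standard closure identity $\cl_{M \con C}(Y) = \cl_M(Y \cup C) \setminus C$. Assuming $X \subseteq \cl_M(Y)$, monotonicity gives $X \subseteq \cl_M(Y \cup C)$, and since $X \cap C = \emptyset$, we deduce $X \subseteq \cl_M(Y \cup C) \setminus C = \cl_N(Y)$, i.e.\ $r_N(Y) = r_N(X)$. Taking the contrapositive, every $Y \subseteq X$ with $r_N(Y) \le r_N(X) - 1$ also satisfies $r_M(Y) \le r_M(X) - 1$.

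With this in hand, I would finish by contradiction. If $X$ were not $d$-thick in $N$, then some family $Y_1, \ldots, Y_k \subseteq X$ with $k < d$ covers $X$ and satisfies $r_N(Y_i) \le r_N(X) - 1$ for each $i$. By the contrapositive above, $r_M(Y_i) \le r_M(X) - 1$ as well, and so this same family witnesses $\tau_{r_M(X) - 1}(M|X) < d$, contradicting the $d$-thickness of $X$ in $M$.

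I do not anticipate a serious obstacle here: the only step meriting any justification is the closure identity for contraction, which is routine. Conceptually, the lemma is the principle that contracting elements outside $X$ can only collapse potential covers of $X$, never refine them, so $d$-thickness is inherited downwards along minors that retain $X$.
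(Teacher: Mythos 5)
Your proof is correct and follows essentially the same route as the paper's: both arguments dispose of deletions outside $X$ as harmless and then pull a hypothetical small cover of $X$ by non-spanning sets in the minor back to a cover by non-spanning sets in $M$, contradicting thickness there. The only cosmetic difference is that you handle the whole contraction set $C$ at once via the identity $\cl_{M \con C}(Y) = \cl_M(Y \cup C) \setminus C$, whereas the paper contracts one nonloop of $\cl_M(X)$ at a time and invokes $\tau_{r(M)-2}(M \con e) \ge \tau_{r(M)-1}(M)$.
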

	\begin{proof}
		Deleting an element of $M$ outside $X$, or contracting an element outside $\cl_M(X)$ does not change $M|X$, 
		so it suffices to show that contracting a nonloop $e \in \cl_M(X)$ does not destroy $d$-thickness of $X$. 
		This follows from the fact that $\tau_{r(M)-2}(M \con e) \ge \tau_{r(M)-1}(M)$.
	\end{proof}
	
	Any rank-$1$ or rank-$0$ matroid is clearly arbitrarily thick. Convenient examples of thick matroids are uniform
	matroids - no rank-$a$ set in the matroid $U_{a+1,b}$ contains more than $a$ elements, so $U_{a+1,b}$ is 
	$\lceil \frac{b}{a} \rceil$-thick. Indeed, sufficient thickness and rank ensure a large uniform minor:
	
	\begin{lemma}\label{thickuniform}
		Let $a$ and $b$ be integers with $1 \le a < b$. If $M$ is $\binom{b}{a}$-thick and $r(M) > a$,
		then $M$ has a $U_{a+1,b}$-minor. 
	\end{lemma}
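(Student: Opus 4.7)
My plan is to prove the contrapositive by induction on $r(M)$: if $M \in \cU(a,b)$ has $r(M) > a$, then $\tau_{r(M)-1}(M) < \binom{b}{a}$, so $M$ fails to be $\binom{b}{a}$-thick.

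For the base case $r(M) = a+1$, I intend to reuse essentially verbatim the rank-$(a+1)$ argument inside the proof of Theorem~\ref{kdensity}. Pick $X \subseteq E(M)$ maximal subject to $M|X \cong U_{a+1,|X|}$; the no-$U_{a+1,b}$-minor hypothesis forces $|X| < b$, and by maximality every element of $E(M)$ lies in the closure of some $a$-subset of $X$. Covering $E(M)$ by the $\binom{|X|}{a} \le \binom{b-1}{a}$ rank-$a$ flats $\cl_M(Y)$ with $Y \in \binom{X}{a}$ then gives $\tau_a(M) \le \binom{b-1}{a}$. Since $r(M) - 1 = a$, the strict inequality $\binom{b-1}{a} < \binom{b}{a}$ finishes the base case.

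For the inductive step $r(M) > a+1$, I would contract any nonloop $e \in E(M)$. The key observation is that $\binom{b}{a}$-thickness transfers from $M$ to $M \con e$: given a cover $\{S_1,\dots,S_k\}$ of $E(M \con e)$ by sets of $(M \con e)$-rank at most $r(M)-2$, the sets $S_i \cup \{e\}$ cover $E(M)$ and each has $M$-rank at most $r(M)-1$, whence $\tau_{r(M \con e)-1}(M \con e) \ge \tau_{r(M)-1}(M) \ge \binom{b}{a}$. Since $r(M \con e) = r(M) - 1 > a$, induction produces a $U_{a+1,b}$-minor of $M \con e$, which is also a minor of $M$.

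I do not expect any genuine obstacle here. The only subtle point is the thickness-under-contraction step in the inductive argument, but this is precisely the ingredient behind Lemma~\ref{maintainthickness}; and the base case reduces to the rank-$(a+1)$ case of Theorem~\ref{kdensity} together with the elementary gap $\binom{b-1}{a} < \binom{b}{a}$, which provides exactly the slack needed between the Geelen--Kabell density bound and the thickness hypothesis.
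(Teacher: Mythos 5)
Your argument is correct and is essentially the paper's own proof with the two cited ingredients unpacked inline: the thickness-transfer step in your induction is exactly Lemma~\ref{maintainthickness} (which the paper uses to contract down to rank $a+1$), and your base case is the rank-$(a+1)$ case of Theorem~\ref{kdensity} combined with the gap $\binom{b-1}{a} < \binom{b}{a}$, just as in the paper.
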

	\begin{proof}
		By Lemma~\ref{maintainthickness}, $d$-thickness of $M$ is preserved by contraction, so by 
		contracting points if needed, we may assume that $r(M) = a+1$. Now, 
		$\binom{b-1}{a} < \binom{b}{a} \le \tau_a(M)$, so the result follows from 
		Theorem~\ref{kdensity}.
	\end{proof}
	
	This lemma tells us that, qualitatively, searching for a $U_{a+1,b}$-minor is equivalent to searching 
	for an appropriately thick minor of rank greater than $a$. We take this approach hereon; in fact, nearly 
	all the uniform minors we find will be constructed by implicit use of this lemma. 
	
	We now turn to a definition of firmness. If $d \ge 1$ is an integer and $M$ is a matroid, then a set
	$\cX \subseteq 2^{E(M)}$ is \textit{$d$-firm} in $M$ if all $\cX' \subseteq \cX$ with 
	$|\cX'| > d^{-1}|\cX|$ satisfy $r_M(\cX') = r_M(\cX)$. 

	Firmness is a measure of how `evenly spread' a collection of sets is. The set of points in a
	$d$-point line is $d$-firm; more generally, the set of $a$-subsets of $E(U_{a+1,b})$ is 
	$\binom{b}{a}$-firm. Firmness is clearly monotone in the sense that $d$-firmness implies 
	$(d-1)$-firmness. 
	
	Our first lemma precisely relates firmness to thickness:
	
	\begin{lemma}\label{thicknessfirmness}
		Let $a \ge 1$ and $d \ge 1$ be integers, and $M$ be a matroid. 
		If $\cX \subseteq \cR_a(M)$ is $d$-firm in $M$, and each $X \in \cX$ is $d$-thick in $M$, 
		then $\cl_M(\cX)$ is $d$-thick in $M$.
	\end{lemma}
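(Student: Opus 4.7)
My plan is to proceed by contradiction. Write $r = r_M(\cX)$ and suppose that $\cl_M(\cX)$ is not $d$-thick in $M$: there exist sets $Y_1,\ldots,Y_k$ with $k < d$, each of rank at most $r-1$ in $M$, whose union covers $\cl_M(\cX)$. In particular, each $X \in \cX$ satisfies $X \subseteq Y_1 \cup \cdots \cup Y_k$.

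The first step is to use the $d$-thickness of each individual $X \in \cX$ to show that some single $Y_i$ must span $X$. Fix $X \in \cX$; then $X = \cup_{i=1}^{k}(X \cap Y_i)$, and each $X \cap Y_i$ has rank at most $a = r_M(X)$. If every $X \cap Y_i$ had rank at most $a-1$, these $k < d$ sets would cover $X$ by rank-$(a-1)$ sets in $M|X$, contradicting $\tau_{a-1}(M|X) \ge d$. Hence some $X \cap Y_i$ has rank exactly $a$, which gives $\cl_M(X) \subseteq \cl_M(Y_i)$.

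The second step combines a pigeonhole argument with the firmness hypothesis. For $i \in \{1,\ldots,k\}$, set $\cX_i = \{X \in \cX : \cl_M(X) \subseteq \cl_M(Y_i)\}$; by the first step, $\cX_1 \cup \cdots \cup \cX_k = \cX$, so some $i$ satisfies $|\cX_i| > d^{-1}|\cX|$. The $d$-firmness of $\cX$ then forces $r_M(\cX_i) = r_M(\cX) = r$, while $\cX_i \subseteq \cl_M(Y_i)$ gives $r_M(\cX_i) \le r_M(Y_i) \le r - 1$, the desired contradiction.

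I do not anticipate a real obstacle: the argument is essentially a two-step reduction from $d$-thickness of $\cl_M(\cX)$ to $d$-thickness of the individual sets in $\cX$, mediated by $d$-firmness. The only subtle point is recognising in the first step that a hypothetical small cover of $\cl_M(\cX)$ by rank-$(r-1)$ sets restricts to a cover of $X$ by rank-$(a-1)$ sets precisely when no single $Y_i$ spans $X$.
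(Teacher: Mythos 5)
Your proof is correct and is essentially the paper's own argument in contrapositive form: the paper shows directly that any cover of $\cl_M(\cX)$ by sets (flats) of smaller rank has size at least $d$, using $d$-thickness of each $X \in \cX$ to force each $X$ to be spanned by a single covering set, and then $d$-firmness together with the same counting/pigeonhole step. No gaps.
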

	
	\begin{proof}
		Let $\cF$ be a cover of $M|\cl_M(\cX)$ with flats of smaller rank; we wish to show that 
		$|\cF| \ge d$. If a set $X \in \cX$ is not contained in any flats in $\cF$, then 
		$\{X \cap F: F \in \cF\}$ is a cover of $M|X$ with sets of smaller rank, of size at
		most $|\cF|$, so $|\cF| \ge d$ by $d$-thickness of $X$. We may therefore assume 
		every $X \in \cX$ is contained in some $F \in \cF$. Now, since $\cX$ is $d$-firm in 
		$M$ and no flat in $\cF$ is spanning in $M|\cl_M(\cX)$, each flat in $\cF$ contains 
		at most $d^{-1}|\cX|$ different sets in $\cX$. We thus have 
		$|\cF| \ge \frac{|\cX|}{d^{-1}|\cX|} = d$, as required. 
	\end{proof}

	We will use this lemma to construct the thick sets of rank greater than $a$ that we are frequently
	seeking. Thus, we often consider a set $\cX \subseteq \cR_a(M)$ that has no firm subset of rank exceeding $a$ 
	in a minor of $M$; we are `excluding' a minor with this structure from $\cX$ and $M$ in lieu 
	of excluding $U_{a+1,b}$. 
	
	This exclusion allows us to control the number of sets in $\cX$ in useful ways; the first of the
	next two lemmas tells us about the `absolute' density of $\cX$ in $M$, and the second about the
	`relative' density of $\cX$ in $M$ as compared to in a minor of $M$. 
	
	\begin{lemma}\label{firmdensity}
		Let $a \ge 1$ and $d \ge 2$ be integers, $M$ be a matroid with $r(M) > a$, 
		and $\cX \subseteq \cR_a(M)$. If $\elem_M(\cX) \ge d^{r(M)-a}$, then there 
		is a set $\cY \subseteq \cX$ such that $r_M(\cY) > a$, and $\cY$ is $d$-firm in $M$. 
	\end{lemma}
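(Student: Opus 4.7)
The plan is to reduce to a simple subcollection and then repeatedly pass to a subcollection witnessing failure of firmness. Since $\elem_M(\cX)$ is the maximum size of a subset of $\cX$ that is simple in $M$, I can pick $\cX^* \subseteq \cX$ simple in $M$ with $|\cX^*| = \elem_M(\cX) \ge d^{r(M)-a}$. The sets in $\cX^*$ then have pairwise distinct closures in $M$, each of which is a rank-$a$ flat.

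I then set $\cY_0 := \cX^*$ and iterate: as long as $\cY_i$ is not $d$-firm in $M$, the negation of the firmness definition furnishes $\cY_{i+1} \subseteq \cY_i$ with $|\cY_{i+1}| > d^{-1}|\cY_i|$ and $r_M(\cY_{i+1}) \ne r_M(\cY_i)$; since $\cY_{i+1} \subseteq \cY_i$, this forces $r_M(\cY_{i+1}) < r_M(\cY_i)$. Because the rank strictly drops at each step, the process terminates at some $\cY_k$ which is $d$-firm and satisfies the bounds $|\cY_k| > d^{-k}|\cX^*| \ge d^{r(M)-a-k}$ and $r_M(\cY_k) \le r(M) - k$.

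The main obstacle — and the whole reason for passing to a simple subcollection first — is to rule out the possibility that termination occurs only after $r_M(\cY_k)$ has collapsed to $a$. Suppose for contradiction that $r_M(\cY_k) = a$. Then $k \le r(M) - a$, which combined with the size bound yields $|\cY_k| > d^{r(M)-a-k} \ge 1$, and hence $|\cY_k| \ge 2$. But any two distinct rank-$a$ sets in $\cY_k$ would both lie in the rank-$a$ flat $\cl_M(\cY_k)$, forcing each of their closures to equal this flat and hence to coincide, contradicting the simplicity of $\cX^* \supseteq \cY_k$. Therefore $r_M(\cY_k) > a$ and $\cY := \cY_k$ is the required collection.
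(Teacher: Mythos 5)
Your proof is correct and takes essentially the same approach as the paper: the paper runs the identical descent as an induction on $r(M)$ (reduce to a simple subcollection, and while $d$-firmness fails pass to a subcollection of proportion at least $d^{-1}$ with strictly smaller rank), and it rules out the rank collapsing to $a$ by the same simplicity-plus-counting observation you use, just verified at each step rather than once at the end. The only nitpick is that for $k=0$ your strict bound $|\cY_k| > d^{-k}|\cX^*|$ is really an equality, but since $d^{r(M)-a} \ge d \ge 2$ the needed conclusion $|\cY_k| \ge 2$ still holds, so this is harmless.
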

	\begin{proof}
		We may assume that $\cX$ is simple. If $r(M) = a+1$, then the union of any two sets 
		in $\cX$ is spanning in $M$, and $|\cX| \ge d$, so $\cX$ is $d$-firm; we assume that 
		$r(M) > a+1$, and proceed by induction on $r(M)$.  If $\cX$ is not $d$-firm, then 
		there is some $\cX' \subseteq \cX$ with $r_M(\cX') < r_M(\cX)$, and 
		$|\cX'| \ge d^{-1}|\cX| \ge d^{r(M)-1-a} \ge d^{r_M(\cX')-a}$. Moreover, 
		$|\cX'| \ge d^{r(M)-1-a} \ge d \ge 2$, so $r_M(\cX') > a$. The result follows
		by applying the inductive hypothesis to $\cX'$ in $M|\cl_M(\cX')$. 
	\end{proof}
	
	\begin{lemma}\label{firmdensityrel}
		Let $a \ge 1$ and $d \ge 2$ be integers, $M$ be a matroid, $N$ be a minor of $M$, 
		and $\cX \subseteq \cR_a(M) \cap \cR_a(N)$. If $\elem_M(\cX) > d^{r(M)-r(N)}\elem_N(\cX)$, 
		then there is a set $\cY \subseteq \cX$ such that $r_M(\cY) > a$, and $\cY$ is $d$-firm in $M$. 
	\end{lemma}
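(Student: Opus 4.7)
The plan is to reduce to the absolute version, Lemma~\ref{firmdensity}, by a pigeonhole argument on $N$-similarity classes within $\cX$. The key observation is that, for subsets of $E(N)$, $M$-similarity refines $N$-similarity: writing $N = M / C \setminus D$, if $\cl_M(X) = \cl_M(X')$ then $\cl_M(X \cup C) = \cl_M(X' \cup C)$, and hence $\cl_N(X) = \cl_N(X')$. Consequently, each $N$-similarity class represented in $\cX$ is a disjoint union of $M$-similarity classes, so the hypothesis $\elem_M(\cX) > d^{r(M) - r(N)} \elem_N(\cX)$ yields by pigeonhole some $X_0 \in \cX$ for which $\cX_0 := [X_0]_N \cap \cX$ satisfies $\elem_M(\cX_0) > d^{r(M) - r(N)}$.

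Next I would bound $r_M(\cX_0)$. Since $\cl_N(X) = \cl_N(X_0)$ for every $X \in \cX_0$, the union of the sets in $\cX_0$ is contained in $\cl_N(X_0)$. Using the standard minor-rank inequality $r_M(S) \le r_N(S) + (r(M) - r(N))$ for $S \subseteq E(N)$ (which follows from $r_M(S) \le r_M(S \cup C) = r_N(S) + r_M(C)$ together with $r_M(C) \le r(M) - r(N)$), we obtain $r_M(\cX_0) \le r_M(\cl_N(X_0)) \le a + r(M) - r(N)$.

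Finally, I would pass to the restriction $M' := M | \cl_M(\cX_0)$, noting that ranks, closures, and hence $\elem$ and $d$-firmness agree between $M$ and $M'$ on subsets of $\cX_0$. By the previous paragraph $r(M') - a \le r(M) - r(N)$, giving $\elem_{M'}(\cX_0) > d^{r(M) - r(N)} \ge d^{r(M') - a}$; and $\elem_{M'}(\cX_0) \ge 2$ rules out $r(M') = a$, so $r(M') > a$. Lemma~\ref{firmdensity} applied to $\cX_0$ in $M'$ then produces the required $\cY \subseteq \cX_0$. The main conceptual step, and the only place the hypothesis is used in a non-trivial way, is the refinement of similarity classes combined with the minor-rank bound; once these are in place the invocation of Lemma~\ref{firmdensity} is essentially immediate.
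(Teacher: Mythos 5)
Your proposal is correct and follows essentially the same route as the paper: a pigeonhole argument over the $N$-similarity classes of $\cX$ (using that $M$-similarity refines $N$-similarity), followed by locating the chosen class inside a flat of rank at most $a + r(M)-r(N)$ and invoking Lemma~\ref{firmdensity} there. The only cosmetic difference is that you restrict to $\cl_M(\cX_0)$ rather than $\cl_M(X\cup C)$, and you handle the degenerate case $r(M')=a$ slightly more explicitly than the paper does.
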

	\begin{proof}
		Let $N = M \con C \del D$, where $r_M(C) = r(M)-r(N)$. Suppose that 
		$\elem_M(\cX) > d^{r_M(C)}\elem_N(\cX)$. By a majority argument applied 
		to the similarity classes of $\cX$ in $N$, there is some $X \in \cX$ such 
		that $\elem_M([X]_N \cap \cX) \ge d^{r_M(C)} = d^{r_M(X \cup C)-a}$. Now, 
		every set in $[X]_N \cap \cX$ is contained in $\cl_M(X \cup C)$, so 
		applying Lemma~\ref{firmdensity} to $M|(\cl_M(X \cup C))$ gives the result.
	\end{proof}
	
	\section{Arrangements}\label{u1s2}
	
	We prove two lemmas related to how collections of sets in a matroid `fit together'. 
	This first lemma shows that, given $\cX \subseteq \cR_a(M)$, we can contract a 
	point of $M$ so that the rank of most sets in $\cX$ is unchanged:
	
	\begin{lemma}\label{pickcontract}
		Let $M$ be a matroid of rank at least $1$, $a \ge 1$ be an integer, and $\cX \subseteq \cR_a(M)$. 
		There exists a nonloop $e \in E(M)$ so that 
		\[\elem_{M}(\cX \cap \cR_a(M \con e)) \ge \left(1 - \tfrac{a}{r(M)}\right)\elem_M(\cX).\]  
	\end{lemma}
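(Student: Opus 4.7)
The plan is a straightforward averaging argument over a basis. First, replace $\cX$ by a simple (in $M$) subset $\cX_0 \subseteq \cX$ of size $\elem_M(\cX)$; it suffices to find the required $e$ with respect to $\cX_0$, since the subset of $\cX_0$ that survives will be automatically simple in $M$ and contained in $\cX \cap \cR_a(M\con e)$.

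The key observation is the equivalence: for any $X \in \cR_a(M)$ and any nonloop $e$, we have $X \in \cR_a(M\con e)$ if and only if $e \notin \cl_M(X)$, since otherwise $r_{M\con e}(X) = r_M(X \cup \{e\}) - 1 = a-1$. So we want a nonloop $e$ that lies outside $\cl_M(X)$ for at least a $(1 - a/r(M))$-fraction of the $X \in \cX_0$.

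Now pick any basis $B$ of $M$, which has $r(M)$ nonloops. For each $X \in \cX_0$, the set $\cl_M(X) \cap B$ is independent (as a subset of $B$) and contained in the rank-$a$ flat $\cl_M(X)$, so $|\cl_M(X) \cap B| \le a$. Double-counting the set $\{(X,e) \in \cX_0 \times B : e \in \cl_M(X)\}$ gives
\[
\sum_{e \in B} |\{X \in \cX_0 : e \in \cl_M(X)\}| \;=\; \sum_{X \in \cX_0} |\cl_M(X) \cap B| \;\le\; a\,|\cX_0|.
\]
By averaging, some $e \in B$ satisfies $|\{X \in \cX_0 : e \in \cl_M(X)\}| \le (a/r(M))|\cX_0|$, so the complementary set $\cY := \{X \in \cX_0 : e \notin \cl_M(X)\}$ has size at least $(1 - a/r(M))|\cX_0|$. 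Then $\cY \subseteq \cX \cap \cR_a(M\con e)$, $\cY$ is simple in $M$, and $e$ is a nonloop, giving the bound $\elem_M(\cX \cap \cR_a(M\con e)) \ge |\cY| \ge (1 - a/r(M))\elem_M(\cX)$.

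There is no real obstacle here; the only subtlety is ensuring the chosen subset remains simple in $M$ (not in $M\con e$), which is automatic because we pick $\cY \subseteq \cX_0$ and $\cX_0$ is already simple in $M$.
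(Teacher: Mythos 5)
Your proof is correct and follows essentially the same route as the paper's: pass to a maximal simple subset of $\cX$, average over a basis $B$ using $|\cl_M(X)\cap B|\le a$, and take the basis element spanned by the fewest sets, noting the surviving subfamily is simple in $M$ and lies in $\cX\cap\cR_a(M\con e)$. No gaps.
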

	\begin{proof}
		Let $\cX'$ be a maximal simple subset of $\cX$, and $B$ be a basis of $M$. 
		Each set in $\cX'$ has at most $a$ elements of $B$ in its closure, so 
		$\sum_{f \in B} |\{X \in \cX': f \in \cl_M(X)\}| \le a|\cX'|$. There is 
		therefore some $e \in B$ such that $|\{X \in \cX': e \in \cl_M(X)\}| \le \frac{a}{|B|}|\cX'|$. 
		Every set in $\cX'$ that does not span $e$ is in $\cR_a(M \con e)$, so 
		\begin{align*}
			\elem_M(\cX \cap \cR_a(M \con e)) &\ge |\cX' \cap \cR_a(M \con e)|\\ 
			&\ge |\cX'| - \tfrac{a}{|B|}|\cX'|\\ &= \elem_M(\cX) - \tfrac{a}{r(M)} \elem_M(\cX),
		\end{align*}
		and the result follows.
	\end{proof}

	This second lemma relates to the fact that a graph with many edges contains 
	either a vertex of large degree or a large matching. Recall that $\cW \subseteq 2^{E(M)}$ 
	is mutually skew in $M$ if $r_M(\bigcup_{W \in \cW}W) = \sum_{W \in \cW} r_M(W)$.
	
	\begin{lemma}\label{findskew}
		Let $M$ be a matroid, $a \ge 1$ and $t \ge 1$ be integers, and let $\cX \subseteq \cR_a(M)$. Either
		\begin{enumerate}[(i)]
			\item there exists $\cW \subseteq \cX$ so that $|\cW| = t$, and $\cW$ is mutually skew in $M$, or
			\item there is a minor $N$ of $M$, a set $\cY \subseteq \cX \cap \cR_a(N)$, and a nonloop $e$ of $N$
				such that $r(N) \ge r(M)-at$,  and $|\cY| \ge (at)^{-1}|\cX|$, and $e \in \cl_{N}(Y)$ for all $Y \in \cY$. 
		\end{enumerate}
	\end{lemma}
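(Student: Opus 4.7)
The plan is a greedy packing argument followed by a pigeonhole on a rank-chain. First, take $\cW \subseteq \cX$ to be a maximal mutually skew subcollection. If $|\cW| \ge t$, any $t$-subset of $\cW$ witnesses (i); so assume $s := |\cW| \le t-1$. Let $C = \bigcup_{W \in \cW} W$, so $r_M(C) = as$ by mutual skewness, and note that maximality of $\cW$ forces $r_M(X \cup C) \le a(s+1)-1$ for every $X \in \cX$: this is immediate when $X \in \cW$ (then $X \subseteq C$), and when $X \notin \cW$ it follows because $\cW \cup \{X\}$ cannot be mutually skew.

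Fix any basis $B_C = \{b_1,\ldots,b_{as}\}$ of $\cl_M(C)$, and for each $X \in \cX$ consider the sequence $\rho_i(X) := r_M(X \cup \{b_1,\ldots,b_i\})$. Its increments are in $\{0,1\}$, with $\rho_0(X) = a$ and $\rho_{as}(X) \le a + as - 1$, so at least one of the $as$ increments is zero. Let $i_X \in \{0,\ldots,as-1\}$ be the smallest index where the increment vanishes. Pigeonhole then yields some $i^*$ with $\cY := \{X \in \cX : i_X = i^*\}$ of size at least $|\cX|/(as) \ge |\cX|/(at)$.

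Set $N := M/\{b_1,\ldots,b_{i^*}\}$ and $e := b_{i^*+1}$. Then $r(N) = r(M) - i^* \ge r(M) - at$, and $e$ is a nonloop of $N$ because $B_C$ is $M$-independent. The crucial observation is that if some $b_{i_0} \in X$ with $i_0 \le i^*$, then $b_{i_0} \in \cl_M(X \cup \{b_1,\ldots,b_{i_0-1}\})$ would make position $i_0 - 1$ a flat step, contradicting the minimality of $i_X = i^*$; so every $X \in \cY$ is disjoint from $\{b_1,\ldots,b_{i^*}\}$ and hence $X \subseteq E(N)$. Minimality of $i_X$ also gives $\rho_{i^*}(X) = a + i^*$, so $r_N(X) = a$; finally, the flat step at $i^*$ yields $e \in \cl_N(X)$.

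The main obstacle I anticipate is the bookkeeping needed to check that each $Y \in \cY$ really lies in $\cR_a(N)$ in the literal sense (a subset of $E(N)$ of rank $a$ there), not merely that some truncation of $Y$ does. What makes this work is the choice of $i_X$ as the \emph{first} flat step: this forces $X$ to avoid the contracted elements of $B_C$, so $X \subseteq E(N)$ comes for free. Once that point is settled, the rank bound on $N$, the size bound on $\cY$, and the conclusion $e \in \cl_N(X)$ all follow mechanically from the chain construction.
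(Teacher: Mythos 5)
Your proof is correct and is essentially the paper's own argument: take a maximal mutually skew subfamily, fix a basis of (the closure of) its union, pigeonhole on the first index at which the rank chain has a flat step (equivalently, the first contraction that collapses $X$), and contract the preceding prefix. Your explicit check that each $X \in \cY$ avoids the contracted elements $\{b_1,\dotsc,b_{i^*}\}$ is a detail the paper's proof leaves implicit, and it is handled correctly by your choice of the \emph{first} flat step.
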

	\begin{proof}
		Let $\cW$ be a maximal mutually skew subset of $\cX$; we may assume that 
		$k < t$. Let $e_1, \dotsc, e_{a|\cW|}$ be a basis for $\bigcup_{W \in \cW} W$.  
		For each $1 \le i \le a|\cW|$, let $M_i = M \con \{e_1, \dotsc, e_i\}$. 
		By maximality of $\cW$, each $X \in \cX-\cW$ satisfies $r_{M_{a|\cW|}}(X) < r_M(X) = a$, 
		and this inequality clearly also holds for all $X \in \cW$, so for each $X \in \cX$ 
		there is some $i_X$ such that $M|X = M_{i_X-1}|X$, and $X$ spans $e_{i_X}$ in $M_{i_X-1}$.
		By a majority argument, there is some $1 \le i_0 \le a|\cW|$ and $\cY \subseteq \cX$ 
		such that $|\cY| \ge (a|\cW|)^{-1}|\cX|$, and $i_Y = i_0$ for all $Y \in \cY$. 
		Since $|\cW| < t$, the minor $N = M \oldcon \{e_1, \dotsc, e_{i_0-1}\}$, 
		along with $\cY$ and $e_{i_0}$, will satisfy the second outcome. 
	\end{proof}
	
	\section{Weighted Covers and Scatteredness}\label{u1s4}
		
	Our main theorem concerns upper bounds on the parameter $\tau_a$. It is therefore 
	natural to consider minimum-sized covers of a matroid with sets of rank at most $a$. 
	However, such a cover has few useful properties. We will therefore change the parameter we are considering to one 
	that considers minimal `weighted' covers. This tweak will force a minimal cover to 
	have many properties that we exploit at length. 
	
	If $M$ is a matroid, and $\cX,\cF \subseteq 2^{E(M)}$, then $\cF$ is a 
	\textit{cover of $\cX$} in $M$ if every set in 
	$\cX$ is contained in a set in $\cF$. A \textit{cover of $M$}
	is a cover of $\{\{e\}: e \in E(M)\}$. 
	
	If $d$ is a positive integer and $\cF \subseteq 2^{E(M)}$, then we write 
	$\wt^d_M(\cF)$ for the sum $\sum_{F \in \cF}d^{r_M(F)}$, which we call
	the \textit{weight} of $\cF$. 
	Thus, the `weight' of a point in $\cF$ is $d$, the `weight' of a line is $d^2$, etc. 
	$\cF$ is a \textit{$d$-minimal cover of $\cX$} 
	if $\cF$ minimizes $\wt_M^d(\cF)$ subject to being a cover of $\cX$.
	We write $\tau^d(M)$ for the weight of a $d$-minimal cover of $M$. 
	The parameter $\tau^d$ will not drop too dramatically in a minor:
	
	\begin{lemma}\label{dcoverdensity}
		Let $d \ge 1$ be an integer. If $N$ is a minor of a matroid $M$, 
		then $\tau^d(N) \ge d^{r(N)-r(M)}\tau^d(M)$. 
	\end{lemma}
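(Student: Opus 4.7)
The plan is to prove the equivalent inequality $\tau^d(M) \le d^{r(M)-r(N)} \tau^d(N)$ by lifting a $d$-minimal cover of $N$ to a cover of $M$. Let $\cF$ be a $d$-minimal cover of $N$, so $\wt^d_N(\cF) = \tau^d(N)$, and write $N = M/C \setminus D$ for disjoint $C, D \subseteq E(M)$. Set $k := r(M) - r(N)$.

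First I would handle the contraction part. For each $F \in \cF$, define $F^{\ast} := \cl_M(F \cup C)$. By the contraction rank formula applied to $F \subseteq E(N) \subseteq E(M/C)$, one has $r_M(F^{\ast}) = r_{M/C}(F) + r_M(C) = r_N(F) + r_M(C)$, and therefore
\[
\wt^d_M(\{F^{\ast} : F \in \cF\}) \;=\; d^{r_M(C)}\,\wt^d_N(\cF) \;=\; d^{r_M(C)}\,\tau^d(N).
\]
The collection $\{F^{\ast} : F \in \cF\}$ covers $E(N) \cup C$ because $F \subseteq F^{\ast}$ for each $F \in \cF$ and $C \subseteq F^{\ast}$.

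For the deletion part, I would set $k_2 := r(M) - r_M(E(N) \cup C)$, so that $k = r_M(C) + k_2$, and choose $D_0 \subseteq D$ with $|D_0| = k_2$ and $E(N) \cup C \cup D_0$ spanning $M$. Enlarging each $F^{\ast}$ to $\widetilde{F} := \cl_M(F \cup C \cup D_0)$ gives, by subadditivity, $r_M(\widetilde{F}) \le r_M(F \cup C) + |D_0| = r_N(F) + r_M(C) + k_2 = r_N(F) + k$. Hence
\[
\wt^d_M(\{\widetilde{F} : F \in \cF\}) \;\le\; d^{k}\,\wt^d_N(\cF) \;=\; d^{r(M)-r(N)}\,\tau^d(N).
\]
Every element of $E(N) \cup C \cup D_0$ is covered; for $e \in D \setminus D_0$, one exploits $\cl_M(E(N) \cup C \cup D_0) = E(M)$ and absorbs such $e$ into a suitable $\widetilde{F}$, where it contributes no additional rank.

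The main obstacle is this last coverage check: while the rank control via subadditivity is routine, guaranteeing that each $e \in D \setminus D_0$ lies in a single $\widetilde{F}$, rather than merely in $\cl_M(\bigcup_F \widetilde{F})$, requires coordinating the choice of $D_0$ with the internal structure of $\cF$. The resolution is to absorb $D \setminus D_0$ into an appropriately chosen $\widetilde{F}$ (for instance, one coming from a cover set whose union with $C \cup D_0$ spans $M$), exploiting that these elements lie in $\cl_M(E(N) \cup C \cup D_0)$ and so contribute no rank beyond what $D_0$ has already accounted for.
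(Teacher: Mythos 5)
Your contraction step is fine, and it is essentially the paper's argument: lifting each set $F$ of a $d$-minimal cover of $N$ to $\cl_M(F\cup C)$ raises its rank by exactly $r_M(C)$, which is all the paper's own proof does (it reduces to showing $\tau^d(M\con e)\ge d^{-1}\tau^d(M)$ for a single nonloop $e$, via the cover $\{\cl_M(F\cup\{e\}):F\in\cF\}$, and never addresses deletions). The genuine gap is precisely the step you flag as the ``main obstacle'': there is no way to absorb the elements of $D\setminus D_0$ into single sets $\widetilde F$. The sets $F\cup C\cup D_0$ typically have small rank, so none of them need span $M$, and an element $e\in D\setminus D_0$ lies only in $\cl_M(E(N)\cup C\cup D_0)$, not in $\cl_M(F\cup C\cup D_0)$ for any particular $F$; forcing such elements into some $\widetilde F$ can raise that set's rank, and with many such elements the bound $\wt^d_M(\{\widetilde F\})\le d^{r(M)-r(N)}\wt^d_N(\cF)$ is lost. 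Your parenthetical fix (``a cover set whose union with $C\cup D_0$ spans $M$'') assumes a spanning cover set that need not exist.

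Moreover, this gap cannot be repaired, because the inequality as literally stated fails once deletions are involved. Take $M=U_{2,3}$, $N=M\del e\cong U_{2,2}$ and $d=3$: then $r(N)=r(M)=2$, and $\tau^d(N)=2d=6$ (cover by the two points), while $\tau^d(M)=\min(3d,d^2)=9$, so $\tau^d(N)<d^{r(N)-r(M)}\tau^d(M)$. What is true, and what the paper's displayed argument actually proves by iterating over nonloops of $C$, is the contraction statement $\tau^d(M\con C)\ge d^{-r_M(C)}\tau^d(M)$; your first paragraph already gives a complete proof of that case, essentially identical to the paper's. So the correct resolution is to restrict attention to contraction minors (or to handle deleted elements by noting the statement is false for them), not to look for a cleverer choice of $D_0$.
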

	\begin{proof}
		It suffices to show that, for a nonloop $e \in E(M)$, we have 
		$\tau^d(M \con e) \ge d^{-1}\tau^d(M)$. If $\cF$ is a $d$-minimal cover of $M \con e$, 
		then $\cF' = \{\cl_M(F \cup \{e\}): F \in \cF\}$ is a cover of $M$, so 
		$\tau^d(M) \le \wt^d_M(\cF') = \sum_{F \in \cF}d^{r_M(F \cup \{e\})} 
		= \sum_{F \in \cF} d^{r_{M \con e}(F)+1} = d \wt_{M \con e}^d(\cF) = d \tau^d(M \con e)$, 
		giving the result.
	\end{proof}
	
	A concept that we will soon use to build highly structured  minors is that of \textit{scatteredness}, 
	another measure of how `spread out' a collection of sets is. A set $\cX \subseteq 2^{E(M)}$ 
	is \textit{$d$-scattered} in a matroid $M$ if all sets in $\cX$ 
	are $d$-thick in $M$, and $\{\cl_M(X): X \in \cX\}$ is a $d$-minimal cover of $\cX$ in $M$. 

	A scattered set is a collection of thick sets that cannot be more efficiently covered with sets of larger rank. Again, we 
	use the symbol $d$; this same parameter will be passed around our proofs in measures of thickness, firmness and scatteredness. 
	
	Our first lemma establishes some nice properties in the case where a minimal cover of a 
	set $\cX \subseteq \cR_a(M)$ is just the ground set of $M$:
	
	\begin{lemma}\label{maximalminimalcover}
		Let $a \ge 1$ and $d \ge 1$ be integers, $M$ be a matroid with $r(M) > a$, and 
		$\cX \subseteq \cR_a(M)$. If all sets in $\cX$ are $d$-thick in $M$, and $\{E(M)\}$ 
		is a $d$-minimal cover of $\cX$ in $M$, then  $\elem_M(\cX) \ge d^{r(M)-a}$ and 
		$M$ is $d$-thick. 
	\end{lemma}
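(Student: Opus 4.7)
The plan is to prove both claims by contradiction. In each case I would exhibit a cover of $\cX$ in $M$ of weight strictly less than $d^{r(M)} = \wt_M^d(\{E(M)\})$, which would contradict the assumed $d$-minimality of $\{E(M)\}$. Since $r(M) > a$, the quantity $d^{r(M)}$ exceeds $d^a$, so there is room to find such a cover.

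For the bound $\elem_M(\cX) \ge d^{r(M)-a}$, I would assume the opposite and pick a maximal simple subset $\cX' \subseteq \cX$, so that $|\cX'| = \elem_M(\cX) < d^{r(M)-a}$. Every $X \in \cX$ is similar in $M$ to some $X' \in \cX'$ and hence is contained in $\cl_M(X')$, so $\{\cl_M(X') : X' \in \cX'\}$ is a cover of $\cX$ in $M$. Its weight is $|\cX'| \cdot d^a < d^{r(M)-a}\cdot d^a = d^{r(M)}$, giving the required contradiction.

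For $d$-thickness of $M$, I would assume for contradiction that there is a cover $\cF$ of $E(M)$ by sets of rank at most $r(M)-1$ with $|\cF| < d$; replacing each member by its closure, I may take each $F \in \cF$ to be a flat of rank at most $r(M)-1$. The key step is to show that $\cF$ is automatically a cover of $\cX$. Fix $X \in \cX$: the family $\{F \cap X : F \in \cF\}$ covers $X$ with fewer than $d$ sets, so by $d$-thickness of $X$ at least one intersection $F \cap X$ has rank equal to $r_M(X) = a$. Since $F$ is a flat containing $F \cap X$, we then have $F \supseteq \cl_M(F \cap X) = \cl_M(X) \supseteq X$. Thus $\cF$ covers $\cX$ with weight at most $|\cF| \cdot d^{r(M)-1} < d \cdot d^{r(M)-1} = d^{r(M)}$, contradicting the $d$-minimality of $\{E(M)\}$.

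I do not anticipate any serious obstacle here; both parts are essentially immediate from the definition of a $d$-minimal cover, and they do not require the more delicate machinery developed in the previous sections. The only point needing a moment's care is the promotion step in the second claim: the $d$-thickness hypothesis on the individual $X \in \cX$, combined with the fact that the covering flats $F$ are closed, is precisely what lifts a hypothetical economical cover of $E(M)$ to a cover of $\cX$ of the same weight.
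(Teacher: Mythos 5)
Your proof is correct and follows essentially the same route as the paper: the first inequality by comparing the weight of the cover $\{\cl_M(X): X\in\cX\}$ (one flat per similarity class) against $d^{r(M)}$, and thickness of $M$ by taking a small cover of $E(M)$ by flats of rank less than $r(M)$ and using $d$-thickness of each $X$ together with $d$-minimality of $\{E(M)\}$. The only difference is presentational: you argue by contradiction and make explicit the step (left implicit in the paper) that a full-rank intersection $F\cap X$ with a flat $F$ forces $X\subseteq F$.
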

	\begin{proof}
		$\{\cl_M(X): X \in \cX\}$ is a cover of $\cX$ in $M$; since $\{E(M)\}$ is a $d$-minimal 
		cover of $\cX$, we have $\wt_M^d(\{\cl_M(X): X \in \cX\}) \ge \wt_M^d(\{E(M)\})$, so 
		$d^{a}\elem_M(\cX) \ge d^{r(M)}$, giving the first part of the lemma. 
		
		We will now show that $M$ is $d$-thick. Let $\cF$ be a cover of $M$ with flats of 
		smaller rank. If some $X \in \cX$ is not contained in $F$ for any set $F$ in $\cF$, 
		then $\{X \cap F: F \in \cF\}$ is a cover of $M|X$ of size at most $|\cF|$ with sets 
		of smaller rank than $X$, so $|\cF| \ge d$ by $d$-thickness of $X$. Otherwise, 
		$\cF$ is a $\cX$-cover, so $\wt_M^d(\cF) \ge \wt_M^d(\{E(M)\})$.  Therefore 
		$|\cF|d^{r(M)-1} \ge d^{r(M)}$, so $|\cF| \ge d$.  
	\end{proof}
	
	Our means of constructing scattered sets is the following lemma:. 
		
	\begin{lemma}\label{getscattered}
		Let $d \ge 1$ be an integer, $M$ be a matroid, and 
		$\cX \subseteq 2^{E(M)}$. If all sets in $\cX$ are $d$-thick in $M$, 
		and $\cF$ is a $d$-minimal cover of $\cX$ in $M$, then every subset 
		of $\cF$ is $d$-scattered in $M$. 
	\end{lemma}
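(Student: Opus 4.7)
The plan is to verify the two defining conditions of $d$-scatteredness for an arbitrary $\cF' \subseteq \cF$: first, that every $F \in \cF'$ is $d$-thick in $M$, and second, that the ``identity'' cover $\{\cl_M(F):F \in \cF'\}$ is a $d$-minimal cover of $\cF'$ in $M$. Both steps will be proved by contradicting $d$-minimality of $\cF$ as a cover of $\cX$.

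For the second condition, suppose for contradiction some $\cG \subseteq 2^{E(M)}$ covers $\cF'$ with $\wt^d_M(\cG) < \sum_{F \in \cF'} d^{r_M(F)}$. I would then consider $\cF^{*} = \cG \cup (\cF \setminus \cF')$. Every $X \in \cX$ lies in some $F \in \cF$; if $F \in \cF \setminus \cF'$ then $F$ itself covers $X$, and if $F \in \cF'$ then there is $G \in \cG$ containing $F$, hence containing $X$. Thus $\cF^{*}$ covers $\cX$, and
\[
\wt^d_M(\cF^{*}) \le \wt^d_M(\cG) + \wt^d_M(\cF \setminus \cF') < \sum_{F \in \cF'} d^{r_M(F)} + \wt^d_M(\cF \setminus \cF') = \wt^d_M(\cF),
\]
contradicting $d$-minimality of $\cF$.

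For the first (thickness) condition, suppose some $F_0 \in \cF$ is not $d$-thick. Then there are flats $F_1,\dotsc,F_k$ of $M$ of rank strictly less than $r_M(F_0)$, with $k < d$, whose union contains $F_0$. The key step, and the main obstacle, is to verify that replacing $F_0$ by $\{F_1,\dotsc,F_k\}$ still gives a cover of $\cX$: concretely, for each $X \in \cX$ with $X \subseteq F_0$, I need to show $X$ is contained in some $F_i$, not merely covered by them. For such an $X$ the sets $X \cap F_i$ cover $X$; if some $F_i$ spans $X$ then $X \subseteq \cl_M(F_i) = F_i$ and we are done, while otherwise every $X \cap F_i$ has rank strictly less than $r_M(X)$, giving a cover of $M|X$ by fewer than $d$ sets of smaller rank and contradicting $d$-thickness of $X$.

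Having justified this replacement, the new family $(\cF \setminus \{F_0\}) \cup \{F_1,\dotsc,F_k\}$ covers $\cX$ and has weight at most
\[
\wt^d_M(\cF) - d^{r_M(F_0)} + k \cdot d^{r_M(F_0)-1} < \wt^d_M(\cF),
\]
since $k < d$. This contradicts $d$-minimality of $\cF$, proving that every $F \in \cF$ (and hence every $F \in \cF'$) is $d$-thick. Together with the first step, this completes the verification that $\cF'$ is $d$-scattered in $M$.
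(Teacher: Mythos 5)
Your proof is correct and takes essentially the same route as the paper: both parts are exchange arguments against the $d$-minimality of $\cF$, with $d$-thickness of the sets in $\cX$ used exactly as you do to show that a family of smaller-rank flats covering some $F_0\in\cF$ must in fact contain each $X\in\cX$ with $X\subseteq F_0$. The only cosmetic difference is that the paper delegates the thickness step to Lemma~\ref{maximalminimalcover} (applied to $M|F$, after noting that $\{F\}$ is a $d$-minimal cover of $\{X\in\cX: X\subseteq F\}$), whereas you inline that same dichotomy and weight computation directly.
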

	\begin{proof}
		Let $\cF' \subseteq \cF$. It is clear from $d$-minimality of $\cF$ that $\cF'$ is simple, 
		and that $\cF'$ is a $d$-minimal cover of $\cF'$. For each $F \in \cF'$, the set 
		$\{F\}$ is a $d$-minimal cover of $\{X \in \cX: X \subseteq F\}$ by $d$-minimality of 
		$\cF$, so by applying Lemma~\ref{maximalminimalcover} to $M|F$, we see that $F$ is 
		$d$-thick in $M$. Therefore $\cF'$ is $d$-scattered in $M$. 	
	\end{proof}
	
	In particular, if $\cF$ is a $d$-minimal cover of $M$ itself, then every subset of $\cF$ is 
	$d$-scattered in $M$, as the singleton $\{e\}$ is $d$-thick in $M$ for any $e \in E(M)$. 
					  
	\begin{lemma}\label{densityabsscattered}
		Let $a \ge 1$ and $d \ge 1$ be integers. If $M$ is a matroid, and $\cX \subseteq \cR_a(M)$ 
		is $d$-scattered in $M$, then $\elem_M(\cX) \le d^{r(M)-a}$. 
	\end{lemma}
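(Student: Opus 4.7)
The plan is to directly compare the cover $\{\cl_M(X) : X \in \cX\}$ supplied by the $d$-scattered hypothesis against the trivial cover $\{E(M)\}$ of $\cX$. By the definition of $d$-scatteredness, $\{\cl_M(X) : X \in \cX\}$ is a $d$-minimal cover of $\cX$ in $M$; since $\{E(M)\}$ is also a cover of $\cX$, this will immediately give
\[\wt^d_M(\{\cl_M(X): X \in \cX\}) \le \wt^d_M(\{E(M)\}) = d^{r(M)}.\]

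The next step will be to evaluate the left-hand side. Because every $X \in \cX$ lies in $\cR_a(M)$, the flat $\cl_M(X)$ has rank $a$ and contributes weight $d^a$. The key observation is that $\{\cl_M(X): X \in \cX\}$ is a \emph{set}, not a multiset: two sets $X, X' \in \cX$ contribute the same flat exactly when $X \equiv_M X'$, so the number of distinct flats appearing is precisely the number of similarity classes meeting $\cX$, which by definition equals $\elem_M(\cX)$. Hence the left-hand side equals $\elem_M(\cX) \cdot d^a$, and the inequality rearranges to $\elem_M(\cX) \le d^{r(M)-a}$, as required.

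There is no real obstacle; the statement is essentially an unwinding of the definitions of $d$-scatteredness, the $d$-weight, and the $\elem_M$ parameter. It is worth noting that this bound is the complement of the first conclusion of Lemma~\ref{maximalminimalcover}, which gave the reverse inequality $\elem_M(\cX) \ge d^{r(M)-a}$ under the stronger hypothesis that $\{E(M)\}$ itself is a $d$-minimal cover; together the two results pin $\elem_M(\cX)$ down to exactly $d^{r(M)-a}$ in that extremal situation.
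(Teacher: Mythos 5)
Your proposal is correct and follows exactly the argument in the paper: compare the weight of the $d$-minimal cover $\{\cl_M(X): X \in \cX\}$ with that of the trivial cover $\{E(M)\}$, and identify the number of distinct rank-$a$ flats with $\elem_M(\cX)$ via similarity classes. The added remark about Lemma~\ref{maximalminimalcover} is a nice observation but not needed.
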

	\begin{proof}
		 $\{E(M)\}$ is a cover of $\cX$ in $M$, so $d$-scatteredness of $\cX$ gives $d^{a}\elem_M(\cX) 
		 = \wt^d_M(\{\cl_M(X): X \in \cX\}) \le \wt^d_M(\{E(M)\}) = d^{r(M)}$, giving the result. 	
	\end{proof}
	
	The parameter $\tau^d$, for an appropriate $d$, is what we use to gain traction towards 
	Theorem~\ref{halfwaypoint}. Considering this parameter instead of $\tau_a$ is not a major 
	change in the setting of excluding $U_{a+1,b}$; indeed, these two parameters differ by at 
	most a constant factor. 
	
	\begin{lemma}\label{coveringcompare}
	If $a,b,d$ are integers with $1 \le a < b$ and $d \ge \tbinom{b}{a}$, and $M \in \fU(a,b)$, then no $d$-minimal cover of $M$ contains a set of rank greater than $a$, 
	and $\tau_a(M) \le \tau^d(M) \le d^{a}\tau_a(M)$. 
	\end{lemma}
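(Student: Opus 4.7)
The plan is to first prove the structural claim (no $d$-minimal cover contains a set of rank exceeding $a$), and then derive the two inequalities as easy consequences. The heart of the argument is an exchange move powered by Theorem~\ref{kdensity} and the hypothesis $d \ge \binom{b}{a}$.

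Let $\cF$ be a $d$-minimal cover of $M$, and suppose for contradiction that some $F \in \cF$ has $r_M(F) > a$. Since $M \in \cU(a,b)$, so is $M|F$, and $r(M|F) > a$, so Theorem~\ref{kdensity} gives a cover $\cG$ of $M|F$ by sets of rank at most $a$ with $|\cG| \le \binom{b-1}{a}^{r_M(F)-a}$. Replacing $F$ in $\cF$ by $\cG$ still yields a cover of $M$, and its total weight changes by at most
\[
\wt^d_M(\cG) - d^{r_M(F)} \le \binom{b-1}{a}^{r_M(F)-a} d^{a} - d^{r_M(F)}.
\]
Because $a \ge 1$ and $b > a$, Pascal's identity gives $\binom{b-1}{a} < \binom{b}{a} \le d$, and since $r_M(F) - a \ge 1$, we get $\binom{b-1}{a}^{r_M(F)-a} < d^{r_M(F)-a}$, so the displayed quantity is strictly negative. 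This contradicts the $d$-minimality of $\cF$, and the structural claim follows.

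For the first inequality, note that $d \ge 1$, so every set in $\cF$ contributes at least $1$ to $\wt^d_M(\cF)$. Since, by the structural claim, every $F \in \cF$ has $r_M(F) \le a$, the collection $\cF$ is an $a$-cover of $M$ and therefore $\tau_a(M) \le |\cF| \le \wt^d_M(\cF) = \tau^d(M)$. For the second inequality, take any $a$-cover $\cG$ of $M$ with $|\cG| = \tau_a(M)$. Then $\cG$ is in particular a cover whose weight satisfies
\[
\tau^d(M) \le \wt^d_M(\cG) = \sum_{G \in \cG} d^{r_M(G)} \le d^{a}|\cG| = d^{a}\tau_a(M).
\]

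The only step requiring genuine thought is the exchange argument, and the main obstacle there is verifying the strict inequality $\binom{b-1}{a} < d$; this is where the precise hypothesis $d \ge \binom{b}{a}$ (rather than $d \ge \binom{b-1}{a}$) is used.
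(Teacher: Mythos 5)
Your proof is correct, but it takes a different route from the paper's at the key structural step. You rule out a high-rank set $F$ in a $d$-minimal cover by an explicit exchange: cover $M|F$ using Theorem~\ref{kdensity} with at most $\binom{b-1}{a}^{r_M(F)-a}$ sets of rank at most $a$, and check that the replacement strictly lowers the weight because $\binom{b-1}{a} < \binom{b}{a} \le d$ and $r_M(F)-a \ge 1$; this is a self-contained, quantitative argument whose only input is the density bound for $\cU(a,b)$. The paper instead reuses its scatteredness machinery: by Lemma~\ref{getscattered} every set in a $d$-minimal cover of $M$ is $d$-thick, so a set of rank greater than $a$ would be $\binom{b}{a}$-thick of rank exceeding $a$ and hence yield a $U_{a+1,b}$-minor by Lemma~\ref{thickuniform}, contradicting $M \in \cU(a,b)$. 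The paper's route is shorter given the framework already built (and it in fact exhibits the stronger property that all sets in the cover are $d$-thick, which is the form used elsewhere), whereas yours is more elementary and makes transparent exactly where the hypothesis $d \ge \binom{b}{a}$, as opposed to $d \ge \binom{b-1}{a}$, is needed. The final two inequalities are handled the same way in both arguments.
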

	\begin{proof}
		Let $\cF$ be a $d$-minimal cover of $M$. By Lemma~\ref{getscattered}, every set in $\cF$ 
		is $d$-thick, so by Lemma~\ref{thickuniform} and definition of $d$, there is no set of 
		rank greater than $a$ in $\cF$. Therefore $\tau_a(M) \le |\cF| \le \wt^d_M(\cF) = \tau^d(M)$.
		Moreover, if $\cH$ is a minimum-sized cover of $M$ with sets of rank at most $a$, then
		$\tau^d(M) \le d^{a}|\cH| = d^{a}\tau_a(M)$. 
	\end{proof}	
	
	\section{Pyramids}\label{u1s3}

	We now define the intermediate structure that is vital to our proof. Let $a \ge 1$, $d \ge 1$, $q \ge 1$ 
	and $h \ge 0$ be integers, $M$ be a matroid, $\cS \subseteq \cR_a(M)$,  and 
	$\{e_1, \dotsc, e_h\}$ be an independent set of size $h$ in $M$.  For each $i \in \{0,1, \dotsc, h\}$, 
	let $M_i = M \dcon \{e_1, \dotsc, e_i\}$. 
	
	We say $(M, \cS; e_1, \dotsc, e_h)$ is an \textit{$(a,q,h,d)$-pyramid} if 
	\begin{itemize}
		\item $\cS \ne \varnothing$ and $S$ is skew to $\{e_1, \dotsc, e_h\}$ 
			for all $S \in \cS$, 
		\item for each $i \in \{0,1,\dotsc, h-1\}$ and $S \in \cS$, there are sets 
			$S_1, \dotsc, S_q \in \cS$, pairwise dissimilar in $M_i$ and each similar to 
			$S$ in $M_{i+1}$, and
		\item $S$ is $d$-thick in $M$ for all $S \in \cS$. 
	\end{itemize}	
	
	 A pyramid is a structured exponential-sized collection of thick rank-$a$ sets. 
	 For each $i \in \{0, \dotsc, h-1\}$ and each $S \in \cS$, contracting $e_{i+1}$ in $M_i$ `collapses'
	 the dissimilar $d$-thick sets $S_1, \dotsc, S_q$ onto the single $d$-thick set $S$ in $M_{i+1}$, 
	 without changing their rank.
	 %A geometric view of $M_i$, and a generic set in $\cS$, is shown in Figure~\ref{pyramidpi}.
	 
	 When $a = 1$, the set $\cS$ simply contains points; in this case, the value of $d$ is 
	 irrelevant and the structure described in the second condition is a set of $q$ other points
	 on a line through $e_{i+1}$. Pyramids are based on objects of the same 
	 name used by Geelen and Kabell in [\ref{gk}]; a pyramid in their sense is a special
	 sort of pyramid in our sense, with $a = 1$.
	
	%\begin{figure}
	%\begin{center}\includegraphics[width=100mm]{pyramid.pdf}\end{center}
	%\caption{A geometric illustration of $M_i$ for each $i$, in a pyramid with $q = 3$. The sets
	%$S_1$, $S_2$ and $S_3$ are pairwise dissimilar in $M_i$, but each is similar to $S$ in 
	%$M_{i+1} = M_i \con e_{i+1}$.}
	%\label{pyramidpic}
	%\end{figure}
	
	The structure of a pyramid is self-similar, and the next two easily proved lemmas 
	concern smaller pyramids inside a pyramid:
	
	\begin{lemma}\label{shrinkpyramid}
		If $(M, \cS; e_1, \dotsc, e_h)$ is an $(a,q,h,d)$-pyramid, and $i$ and $j$ 
		are integers with $0 \le i \le j \le h$, then 
		\[(M \con \{e_{i+1}, \dotsc, e_j\}, \cS; e_1, \dotsc, e_i, e_{j+1}, \dotsc, e_h)\] 
		is an $(a,q,h-(j-i),d)$-pyramid.
	\end{lemma}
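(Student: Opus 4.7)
Set $T = \{e_{i+1},\dotsc,e_j\}$, $M^* = M \con T$, and for each $k \in \{0,\dotsc,h-(j-i)\}$ write $M^*_k$ for the matroid obtained from $M^*$ by contracting the first $k$ terms of the reduced sequence $e_1,\dotsc,e_i,e_{j+1},\dotsc,e_h$. The plan is to verify the three defining conditions of an $(a,q,h-(j-i),d)$-pyramid. Nonemptiness of $\cS$ is inherited. Each $S \in \cS$ remains $d$-thick in $M^*$ by Lemma~\ref{maintainthickness}. That $S$ is skew to the reduced sequence in $M^*$ is a one-line rank computation, using that $\{e_1,\dotsc,e_h\}$ is independent and $r_M(S \cup \{e_1,\dotsc,e_h\}) = a+h$.

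For the branching axiom one unravels contractions to see that $M^*_k = M_{k+(j-i)}$ whenever $k \ge i$, so every new step $k$ with $k \ge i$ is handled by invoking the original pyramid's branching at step $k+(j-i)$ (which lies in the valid range $\{0,\dotsc,h-1\}$) and reusing the same witnesses. The real work is for $k < i$, where $M^*_k = M_k \con T$ is a proper minor of $M_k$ and, in principle, the additional contraction of $T$ could collapse pairs that were dissimilar in $M_k$. My plan there is to start from the witnesses $S_1,\dotsc,S_q$ produced by the original pyramid at step $k$ (pairwise dissimilar in $M_k$, each similar to $S$ in $M_{k+1}$) and argue that both properties survive. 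Similarity is preserved by contraction in general, so each $S_\ell$ is similar to $S$ in $M^*_{k+1} = M_{k+1} \con T$; the content of the case is that pairwise dissimilarity persists.

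The tool I would use is the following small sub-lemma: if $X$ and $Y$ have equal rank in a matroid $N$ and $X \cup Y$ is skew to $B$ in $N$, then $X \equiv_N Y$ iff $X \equiv_{N \con B} Y$. This is immediate from $r_{N \con B}(X) = r_N(X)$, $r_{N \con B}(Y) = r_N(Y)$, and $r_{N \con B}(X \cup Y) = r_N(X \cup Y)$, together with the characterization that two equal-rank sets are similar if and only if their union has the same rank. The main obstacle is then to verify its hypothesis, namely that $T$ is skew to $S_\ell \cup S_m$ in $M_k$ for $\ell \ne m$. For this I would exploit that $S, S_\ell, S_m$ all lie in the rank-$(a+1)$ flat $F = \cl_{M_k}(S \cup \{e_{k+1}\})$, because $S_\ell \equiv_{M_{k+1}} S \equiv_{M_{k+1}} S_m$; this together with dissimilarity forces $r_{M_k}(S_\ell \cup S_m) = a+1$. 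If $T$ failed to be skew to $S_\ell \cup S_m$ in $M_k$, then $S_m$ would sit inside $F \cap \cl_{M_k}(S_\ell \cup T)$; a modular rank calculation, using that the skewness of $S$ to $\{e_1,\dotsc,e_h\}$ keeps $e_{k+1}$ out of $\cl_{M_k}(S_\ell \cup T)$, shows this intersection has rank exactly $a$ and hence equals $\cl_{M_k}(S_\ell)$, contradicting $S_\ell \not\equiv_{M_k} S_m$. I expect this modular rank computation to be the only genuinely non-routine step of the proof.
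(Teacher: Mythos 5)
The paper does not actually give a proof of this lemma (it is one of the two "easily proved" facts whose verification is omitted), so the only question is whether your verification is sound, and it is. You correctly isolate the only non-routine case: for contraction steps $k < i$ one must check that the witnesses $S_1,\dotsc,S_q$ for step $k$ of the original pyramid stay pairwise dissimilar after contracting $T = \{e_{i+1},\dotsc,e_j\}$, and your sub-lemma (equal-rank sets are similar iff their union has the same rank, so similarity/dissimilarity is unaffected by contracting a set skew to the union) together with the skewness of $T$ to $S_\ell \cup S_m$ in $M_k$ does the job; the remaining conditions (independence of the reduced sequence, skewness, rank $a$, $d$-thickness via Lemma~\ref{maintainthickness}, and the $k \ge i$ steps via $M^*_k = M_{k+(j-i)}$) are routine exactly as you say. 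Two small touch-ups: in the contradiction step, the implication "not skew $\Rightarrow S_m \subseteq \cl_{M_k}(S_\ell \cup T)$" needs the fact that $S_\ell$ itself is skew to $T$ in $M_k$ (so that $r_{M_k}(S_\ell\cup T)=a+|T|$ already forces $\cl_{M_k}(S_\ell\cup S_m\cup T)=\cl_{M_k}(S_\ell\cup T)$), and the skewness keeping $e_{k+1}$ out of $\cl_{M_k}(S_\ell\cup T)$ is that of $S_\ell$ rather than of $S$ -- both are available since every member of $\cS$ is skew to $\{e_1,\dotsc,e_h\}$. In fact you can avoid the contradiction entirely: dissimilarity of $S_\ell,S_m$ in $M_k$ gives $\cl_{M_k}(S_\ell\cup S_m)=\cl_{M_k}(S\cup\{e_{k+1}\})$, and the skewness of $S$ to $\{e_{k+1},\dotsc,e_h\}$ in $M_k$ then yields $r_{M_k}(S_\ell\cup S_m\cup T)=a+1+|T|$ directly, which is the hypothesis of your sub-lemma.
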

	
	\begin{lemma}\label{minorpyramid}
		Let $(M,\cS; e_1, \dotsc, e_h)$ be an $(a,q,h,d)$-pyramid and let $N$ be a minor of 
		$M \con \{e_1, \dotsc, e_h\}$. If $\cY \subseteq \cS \cap \cR_a(N)$, then there is a 
		minor $M'$ of $M$ and an $(a,q,h,d)$-pyramid $(M',\cS';e_1, \dotsc, e_h)$, so that 
		$\cY \subseteq \cS' \subseteq \cS$ and $N|\cY = (M' \con \{e_1, \dotsc, e_h\})|\cY$. 
	\end{lemma}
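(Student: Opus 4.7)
The plan is to set $M' := M \con C \del D$, where $N = (M \con \{e_1,\dotsc,e_h\}) \con C \del D$ with $C$ chosen to be an independent set of $M \con \{e_1,\dotsc,e_h\}$. This makes $\{e_1,\dotsc,e_h\}$ independent in $M'$ and forces $M' \con \{e_1,\dotsc,e_h\} = N$, so the agreement $N|\cY = (M' \con \{e_1,\dotsc,e_h\})|\cY$ holds automatically. I take $\cS' := \cS \cap \cR_a(N)$, which contains $\cY$ and is a subset of $\cS$.

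For each $S \in \cS'$, the equation $r_N(S) = a$ together with submodularity forces $r_{M'}(S) = a$ and $S$ skew to $\{e_1,\dotsc,e_h\}$ in $M'$. The equality $r_{M'}(S) = r_M(S)$ then implies $S$ is skew to $C$ in $M$; and since subsets of skew sets are skew, every $T \subseteq S$ satisfies $r_{M'}(T) = r_M(T)$, so $M'|S = M|S$ and $S$ inherits $d$-thickness. This handles all of the pyramid conditions except the branching one.

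For branching, given $S \in \cS'$ and $i \in \{0, \dotsc, h-1\}$, the pyramid on $M$ supplies $S_1,\dotsc,S_q \in \cS$, pairwise dissimilar in $M_i$ and each similar to $S$ in $M_{i+1}$. Since closure equalities pass to minors (if $\cl_M(X) = \cl_M(Y)$ with $X, Y \subseteq E(M')$, then $\cl_{M'}(X) = \cl_{M'}(Y)$), the similarity $S_j \equiv_{M_{i+1}} S$ can be extended by taking closures with $\{e_{i+2},\dotsc,e_h\}$ to give $\cl_M(S_j \cup \{e_1,\dotsc,e_h\}) = \cl_M(S \cup \{e_1,\dotsc,e_h\})$, which passes to $M'$. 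Combined with the rank and skewness of $S$ established above, this forces $r_{M'}(S_j) = a$ and $S_j$ skew to $\{e_1,\dotsc,e_h\}$ in $M'$, so $S_j \in \cS'$ and $S_j \equiv_{M'_{i+1}} S$.

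The principal obstacle is preserving dissimilarity in $M'_i$: showing that $S_j \not\equiv_{M'_i} S_k$ for $j \ne k$. The $S_j$ sit as pairwise distinct rank-$a$ flats inside the rank-$(a+1)$ flat $\cl_{M_i}(S \cup \{e_{i+1}\})$ of $M_i$; using the skewness of each $S_j$ to $C$ in $M$ (inherited from $S$ via the similarity established above), I expect to verify by a careful rank and closure computation on this rank-$(a+1)$ flat and its image in $M'_i$ that the corresponding rank-$a$ flats remain distinct, so that the $q$ sets continue to witness the branching condition in $M'$. This local analysis inside the rank-$(a+1)$ flat is the technical heart of the verification.
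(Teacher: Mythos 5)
There is a genuine gap, and it occurs at the very first step rather than at the ``principal obstacle'' you flag at the end. By insisting on $M' = M \con C \del D$ (so that $M' \con \{e_1,\dotsc,e_h\} = N$ exactly) and taking $\cS' = \cS \cap \cR_a(N)$, you have deleted $D$, and with it, in general, the branching witnesses. For $S \in \cS'$ and a level $i$, the sets $S_1,\dotsc,S_q \in \cS$ supplied by the pyramid on $M$ lie in $\cl_{M_h}(S)$ (writing $M_h = M \con \{e_1,\dotsc,e_h\}$) but need not be subsets of $E(N)$ at all: $N$ may delete every element of $\cl_{M_h}(S) \setminus S$. Your closure-and-rank argument that $S_j \in \cS'$ silently assumes $S_j \subseteq E(M')$, which you never ensure and cannot ensure within this framework. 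Concretely, let $M = U_{2,4}$ on $\{e_1,x,y,z\}$ with $\cS = \{\{x\},\{y\},\{z\}\}$; this is a $(1,2,1,d)$-pyramid, and for $N = (M \con e_1)\del\{y,z\}$ and $\cY = \{\{x\}\}$, any $M'$ with $M' \con \{e_1\} = N$ has ground set $\{e_1,x\}$, so the only available member of $\cS$ is $\{x\}$ and the branching condition ($q = 2$ pairwise dissimilar witnesses) cannot be met --- although the lemma's conclusion holds trivially with $M' = M$, $\cS' = \cS$. This is exactly why the statement asks only for $N|\cY = (M' \con \{e_1,\dotsc,e_h\})|\cY$ rather than $M' \con \{e_1,\dotsc,e_h\} = N$: one should perform the contraction but \emph{not} the deletion. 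Take $C$ independent in $M_h$, set $M' = M \con C$, and let $\cS'$ be the union over $Y \in \cY$ of the classes $\{S \in \cS : S \equiv_{M_h} Y\}$. This $\cS'$ contains $\cY$, is closed under taking branching witnesses (a witness similar to $S$ in $M_{i+1}$ is similar to $S$, hence to some $Y \in \cY$, in $M_h$), and is disjoint from $C$: since $r_N(Y) = a$, each $Y \in \cY$ is skew to $C$ in $M_h$, so $\cl_{M_h}(Y) \cap C = \varnothing$ because $C$ is independent; finally, omitting the deletion of $D$ does not affect the restriction to $\bigcup \cY$.

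Separately, the step you describe as the technical heart --- that the witnesses remain pairwise dissimilar after contracting $C$ --- is only announced, not carried out, so the proposal is incomplete even apart from the issue above. With the corrected $\cS'$ it goes through cleanly: for $S \in \cS'$ one has $\cl_M(S \cup \{e_1,\dotsc,e_h\}) = \cl_M(Y \cup \{e_1,\dotsc,e_h\})$ for some $Y \in \cY$, and $Y$ is skew to $C \cup \{e_1,\dotsc,e_h\}$ in $M$, whence $r_M(S \cup \{e_1,\dotsc,e_h\} \cup C) = a + h + |C|$ and so $S$, $\{e_1,\dotsc,e_{i+1}\}$ and $C$ are mutually skew in $M$. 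Since dissimilar witnesses $S_j, S_k$ for $S$ at level $i$ satisfy $\cl_{M_i}(S_j \cup S_k) = \cl_{M_i}(S \cup \{e_{i+1}\})$, contracting $C$ leaves $r(S_j \cup S_k) = a+1 > a$, so they stay dissimilar in $M_i \con C$; the same skewness gives $r_{M'}(S) = a$ and skewness of $S$ to $\{e_1,\dotsc,e_h\}$ in $M'$, while $d$-thickness is immediate from Lemma~\ref{maintainthickness} once $S \subseteq E(M')$. The paper omits the proof as routine, but the routine argument is the one just described, not the construction you set up.
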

	
	The next lemma is our means of adding a 	`level' to a pyramid. In accordance with the 
	definition, it requires a point $e$ and a smaller pyramid on $M \con e$ such that $e$ 
	`lifts' each set in the pyramid into $q+1$ distinct sets. The proof, which we omit, is 
	cumbersome but routine. 
	
	\begin{lemma}\label{augmentpyramid}
		Let $M$ be a matroid, $e \in E(M)$ be a nonloop, $a,d,q,h$ be integers with 
		$q,a,d \ge 1$ and $h \ge 0$, and $\cX \subseteq \cR_a(M)$ be simple in $M$. 
		Let $\cX_{> q} = \{X \in \cX: |[X]_{M \con e} \cap \cX| > q\}$. If $M \con e$ 
		has an $(a,q+1,h,d)$-pyramid minor $P$ such that $\cS_P \subseteq \cX_{>q}$, then 
		$M$ has an $(a,q+1,h+1,d)$-pyramid minor $P'$ such that $\cS_{P'} \subseteq \cX$. 
	\end{lemma}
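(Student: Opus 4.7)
The plan is to prepend $e$ to the element sequence of $P$, supplying the new bottom-level branching from the hypothesis $\cS_P\subseteq\cX_{>q}$: each $S\in\cS_P$ has at least $q+1$ partners in $\cX$ similar to it in $M\con e$, and these partners become the $q+1$ sets branching under $e$ at the top of $P'$.

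Write $P = (M',\cS_P;e_1,\dotsc,e_h)$ with $M' = (M\con e)\con C\del D$ for disjoint $C,D\subseteq E(M)\setminus\{e\}$. For each $S\in\cS_P$ pick distinct sets $T_0^S = S, T_1^S,\dotsc,T_q^S\in[S]_{M\con e}\cap\cX$ (available since $|[S]_{M\con e}\cap\cX|>q$). Set $\cS^* = \{T_j^S : S\in\cS_P,\ 0\le j\le q\}$, $D^* = D\setminus\bigcup_{T\in\cS^*}T$, and $M^* = M\con C\del D^*$, so that $e\in E(M^*)$ is a nonloop, every $T\in\cS^*$ is a subset of $E(M^*)$, and $M^*\con e$ extends $M'$ by undeleting the elements of $D\cap\bigcup_T T$. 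The candidate is $P' = (M^*,\cS^*;e,e_1,\dotsc,e_h)$.

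The pivotal observation is that every $T\in\cS^*$ is skew to $C$ in $M$: $S$ is skew to $C\cup\{e\}$ in $M$ because $r_{M'}(S) = a$, and since $T\equiv_{M\con e}S$ one checks via submodularity (using $e\notin\cl_M(C)$) that $T$ too is skew to $C$ in $M$. This yields $r_{M^*}(T) = a$ and $M^*|T = M|T$. Skewness of $T$ to $\{e,e_1,\dotsc,e_h\}$ in $M^*$ follows from $S$ skew to $\{e_1,\dotsc,e_h\}$ in $M'$ and $T\equiv_{M\con e}S$. For each level $i\ge 1$ of $P'$ the branching condition for $T\in\cS^*$ reduces directly to the level-$(i-1)$ condition of $P$ for the parent $S$, since the witnessing similarities pass to the contractions $M^*\con\{e,e_1,\dotsc,e_{j-1}\}$; similarly the $d$-thickness of each $T\in\cS^*$ in $M^*$ is inherited (via $M^*|T = M|T$) from the $d$-thickness data of $P$ together with the common closure $\cl_{M^*}(T\cup\{e\}) = \cl_{M^*}(S\cup\{e\})$.

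The main difficulty is the new level-$0$ branching condition: the $q+1$ sets $T_0^S,\dotsc,T_q^S$, pairwise dissimilar in $M$ (by simplicity of $\cX$) and mutually similar in $M^*\con e$, must remain pairwise dissimilar in $M^*$. The key step is a modular-pair calculation inside the rank-$(a+r_M(C)+1)$ flat $\cl_M(S\cup\{e\}\cup C)$: submodularity forces $\cl_M(T_j^S\cup C)\cap\cl_M(S\cup\{e\}) = \cl_M(T_j^S)$, so distinct rank-$a$ flats $\cl_M(T_j^S)$ give rise to distinct rank-$(a+r_M(C))$ flats $\cl_M(T_j^S\cup C)$, i.e.\ pairwise dissimilarity in $M\con C$; and the choice of $D^*$ (retaining every element of every $T_j^S$) then rules out further identification in $M^* = M\con C\del D^*$, since $\cl_{M^*}(T_j^S) = \cl_{M^*}(T_k^S)$ would force $T_j^S\subseteq\cl_{M\con C}(T_k^S)$, contradicting the dissimilarity just established. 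All of this is the careful rank-and-closure bookkeeping the authors call ``cumbersome but routine''.
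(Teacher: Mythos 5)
The paper omits its own proof of this lemma (``cumbersome but routine''), so there is no argument to compare against; your construction --- prepend $e$ to the contraction sequence, take for each $S\in\cS_P$ a family $T^S_0=S,T^S_1,\dotsc,T^S_q\in[S]_{M\con e}\cap\cX$, and withhold their elements from the deletion set --- is surely the intended one, and most of your verification is sound. In particular: each $T\in\cS^*$ is skew to $C\cup\{e\}$ in $M$ (since $T\equiv_{M\con e}S$ forces $r_{(M\con e)\con C}(T)=r_{(M\con e)\con C}(S)=a$), which also gives $T\cap C=\varnothing$ and $M^*|T=M|T$; the level-$i$ branching for $i\ge 1$ does transfer, because similarity and dissimilarity are rank conditions unaffected by deleting or undeleting $D\setminus D^*$; and your modular-intersection argument for the new level-$0$ condition is correct --- the skewness of $T^S_j$ to $C\cup\{e\}$ is exactly what drives the submodularity computation giving $\cl_M(T^S_j\cup C)\cap\cl_M(S\cup\{e\})=\cl_M(T^S_j)$, so the $T^S_j$ stay pairwise dissimilar in $M\con C$, and deletion cannot re-identify them. (You should also say explicitly that $M'$ may be written as $M\con(C\cup\{e\})\del D$ with $C\cup\{e\}$ independent in $M$; that is where $e\notin\cl_M(C)$, which you need both for $e$ to be a nonloop of $M^*$ and in the submodularity step, comes from.)

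The genuine gap is the $d$-thickness of the newly adjoined sets. Saying it is ``inherited from the $d$-thickness data of $P$ together with the common closure $\cl_{M^*}(T\cup\{e\})=\cl_{M^*}(S\cup\{e\})$'' proves nothing: the pyramid only tells you that $M'|S$ is $d$-thick for $S\in\cS_P$, and $T\equiv_{M\con e}S$ places no constraint at all on the internal structure of $M|T=M^*|T$. Indeed this step cannot be completed from the hypotheses as literally stated: take $a=2$, $q=1$, $h=0$, $d=3$, and let $M$ be a rank-$3$ real-representable matroid in which $S=\{s_1,s_2,s_3\}$ is a $3$-point line, $T=\{t_1,t_2\}$ spans a different line, and $e$ lies on neither; with $\cX=\{S,T\}$, the pair $(M\con e,\{S\};)$ is a $(2,2,0,3)$-pyramid with $\cS_P\subseteq\cX_{>1}$, yet any $(2,2,1,3)$-pyramid $P'$ with $\cS_{P'}\subseteq\cX$ must contain two sets that are pairwise dissimilar in its underlying matroid, hence must contain $T$, and a $2$-element rank-$2$ set is never $3$-thick. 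So the lemma needs the additional hypothesis that every set in $\cX$ is $d$-thick in $M$ --- which it has in its one application inside Lemma~\ref{hardtechnical}, where $\cX$ is drawn from a $d$-scattered family --- and once that is added, your own observation $M^*|T=M|T$ settles the thickness condition in one line and the rest of your argument goes through.
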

	
	The following lemma shows that a pyramid can be restricted to have bounded rank:
	
	\begin{lemma}\label{restrictpyramid}
		Let $(M, \cS; e_1, \dotsc, e_h)$ be an $(a,q,h,d)$-pyramid, let 
		$M_h = M \con \{e_1, \dotsc, e_h\}$, and let $S \in \cS$. There is a restriction $M'$ 
		of $M$ such that \[(M', \{S' \in \cS: S' \equiv_{M_h} S\}; e_1, \dotsc, e_h)\] is 
		an $(a,q,h,d)$-pyramid and $r(M') = a+h$.
	\end{lemma}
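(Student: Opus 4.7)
The plan is to set $F := \cl_M(S \cup \{e_1, \dotsc, e_h\})$ and take $M' := M|F$. Since $r_M(S) = a$ and $S$ is skew to the independent set $\{e_1, \dotsc, e_h\}$, the rank of $M'$ is exactly $a + h$, as desired. Let $\cS' = \{S' \in \cS : S' \equiv_{M_h} S\}$; then $S \in \cS'$, and any $S' \in \cS'$ lies inside $F$, since $\cl_{M_h}(S') = \cl_{M_h}(S)$ gives $S' \subseteq \cl_M(S \cup \{e_1, \dotsc, e_h\}) = F$. Each such $S'$ remains skew to $\{e_1, \dotsc, e_h\}$ and of rank $a$ in $M'$ (restriction preserves ranks of subsets), and it is $d$-thick in $M'$ by Lemma~\ref{maintainthickness}.

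It remains to verify the recursive pyramid condition for $(M', \cS'; e_1, \dotsc, e_h)$. Fix $S' \in \cS'$ and an integer $i$ with $0 \le i < h$, and let $S_1, \dotsc, S_q \in \cS$ be the witnesses supplied by the original pyramid structure in $M$: pairwise dissimilar in $M_i$ and each similar to $S'$ in $M_{i+1}$. The key observation is that similarity is preserved under further contraction, since $\cl_N(X) = \cl_N(Y)$ implies $\cl_N(X \cup C) = \cl_N(Y \cup C)$, and hence $\cl_{N \con C}(X) = \cl_{N \con C}(Y)$. Applied to $C = \{e_{i+2}, \dotsc, e_h\}$ this gives $S_j \equiv_{M_h} S' \equiv_{M_h} S$, so each $S_j$ lies in $\cS'$.

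The main obstacle is to check that dissimilarity in $M_i$ and similarity in $M_{i+1}$ persist under passing to the restrictions $M'_i := M' \con \{e_1, \dotsc, e_i\}$ and $M'_{i+1}$. For this I would exploit the fact that, for rank-$a$ sets, (dis)similarity is captured by the rank of their union: two rank-$a$ sets $X, Y$ in a matroid $N$ satisfy $X \equiv_N Y$ iff $r_N(X \cup Y) = a$, and are dissimilar iff $r_N(X \cup Y) > a$. Since each of the $S_j$ and $S'$ is skew to $\{e_1, \dotsc, e_h\}$ in $M$, none of them contains any $e_j$, so they all lie in $F \setminus \{e_1, \dotsc, e_i\} = E(M'_i)$. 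Restriction preserves ranks of subsets, so $r_{M_i}(S_j \cup S_k) = r_{M'_i}(S_j \cup S_k)$ and $r_{M_{i+1}}(S_j \cup S') = r_{M'_{i+1}}(S_j \cup S')$; the required dissimilarity and similarity transfer to $M'_i$ and $M'_{i+1}$ respectively, completing the verification that $(M', \cS'; e_1, \dotsc, e_h)$ is an $(a,q,h,d)$-pyramid of rank $a+h$.
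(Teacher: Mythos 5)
Your proposal is correct and follows essentially the same route as the paper: take $M' = M|\cl_M(S \cup \{e_1,\dotsc,e_h\})$ and $\cS' = \{S' \in \cS : S' \equiv_{M_h} S\}$, observe that the witnessing sets $S_1,\dotsc,S_q$ are similar to $S$ in $M_h$ and hence lie in $\cS'$ and in $E(M')$, and note that ranks, skewness, thickness and (dis)similarity are unaffected by the restriction. Your extra verification that (dis)similarity transfers via ranks of unions is exactly the detail the paper leaves implicit.
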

	\begin{proof}
		Let $M' = M|\cl_M(S \cup \{e_1, \dotsc, e_h\})$ and let 
		$\cS' = \{S' \in \cS: S' \equiv_{M_h} S\}$. Since $r_{M_h}(S) = a$, 
		we have $r(M') = a+h$. Let $0 \le i < h$. Let $S' \in \cS'$, and $S'_1, \dotsc, S'_q$ 
		be the sets for $i$ and $S'$ as given by the definition of a pyramid. Each $S'_j$ is 
		similar to $S'$ in $M_i$ and therefore also in $M_h$, so 
		$\{S'_1, \dotsc, S'_q\} \subseteq \cS'$ and 
		$(S'_1 \cup \dotsc \cup S'_q) \subseteq E(M')$. Therefore, 
		$(M', \cS'; e_1, \dotsc, e_h)$ is an $(a,q,h,d)$-pyramid. 
	\end{proof}
	
	Our penultimate lemma verifies that the set $\cS$ in a pyramid has exponential size:	
	\begin{lemma}\label{sizepyramid}
		If $(M, \cS; e_1, \dotsc, e_h)$ is an $(a,q,h,d)$-pyramid and $M_h = M \con \{e_1, \dotsc, e_h\}$, 
		then $\elem_M(\cS) \ge q^h\elem_{M_h}(\cS)$. 
	\end{lemma}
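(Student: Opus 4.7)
The plan is to induct on $h$, using Lemma~\ref{shrinkpyramid} to strip one level off the pyramid at each step. The base case $h = 0$ is immediate, since $M_0 = M$ and $q^0 = 1$ make the inequality trivial.

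For the inductive step, the first ingredient I need is a basic observation about similarity under contraction: if $X \equiv_M X'$, then $\cl_M(X \cup \{e_1\}) = \cl_M(X' \cup \{e_1\})$, which on passing to $M \con e_1$ yields $X \equiv_{M \con e_1} X'$. Consequently each $(M \con e_1)$-similarity class is a union of $M$-similarity classes, so the assignment $[S]_M \mapsto [S]_{M \con e_1}$ is a well-defined surjection from the $M$-similarity classes meeting $\cS$ onto the $(M \con e_1)$-similarity classes meeting $\cS$.

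The pyramid axioms then force each fiber of this surjection to have size at least $q$. Indeed, applying the second defining property of a pyramid at level $i = 0$ to a representative $S \in \cS$ produces sets $S_1, \dotsc, S_q \in \cS$ pairwise dissimilar in $M_0 = M$ yet all similar to $S$ in $M_1 = M \con e_1$; these witness at least $q$ distinct $M$-similarity classes all mapping to $[S]_{M \con e_1}$. Counting gives $\elem_M(\cS) \ge q \cdot \elem_{M \con e_1}(\cS)$.

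To close, I would apply Lemma~\ref{shrinkpyramid} (with $i=0$ and $j=1$) to conclude that $(M \con e_1, \cS; e_2, \dotsc, e_h)$ is an $(a,q,h-1,d)$-pyramid, and then invoke the inductive hypothesis to obtain $\elem_{M \con e_1}(\cS) \ge q^{h-1}\elem_{M_h}(\cS)$; multiplying through yields $\elem_M(\cS) \ge q^h \elem_{M_h}(\cS)$, as required. There is no real obstacle here — the axioms of a pyramid are tailored precisely so that the per-level bound $\elem_{M_i}(\cS) \ge q \cdot \elem_{M_{i+1}}(\cS)$ falls out in one line once the similarity-under-contraction observation is in hand.
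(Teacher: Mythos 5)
Your proof is correct and follows essentially the same route as the paper: induction on $h$, stripping the first level via Lemma~\ref{shrinkpyramid}, and extracting the factor $q$ per level from the second pyramid axiom (your fiber-counting argument for the map $[S]_M \mapsto [S]_{M\con e_1}$ just makes explicit the step the paper asserts directly).
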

	\begin{proof}
		When $h = 0$, there is nothing to show. Otherwise, suppose that the result holds for 
		a fixed $h$, and let $(M, \cS; e_1, \dotsc, e_{h+1})$ be an $(a,q,h+1,d)$-pyramid. 
		We know that $(M \con e_1, \cS; e_2, \dotsc, e_{h+1})$ is an $(a,q,h,d)$-pyramid; 
		so $\elem_{M \con e_1}(\cS) \ge q^{h} \elem_{M_{h+1}}(\cS)$ by the 
		inductive hypothesis. Moreover, for each $S \in \cS$, there are pairwise dissimilar 
		sets $S_1, \dotsc, S_q \in \cS$, each similar to $S$ in $M \con e_1$. Therefore 
		$\elem_M(\cS) \ge q\elem_{M \con e_1}(\cS) \ge q^{h+1}\elem_{M_{h+1}}(\cS)$, 
		so the lemma holds. 		
	\end{proof}
	
	Finally, we observe that a pyramid has a restriction with bounded rank, containing an
	exponential-size subset of $\cS$. This lemma follows routinely from
	Lemmas~\ref{shrinkpyramid},~\ref{restrictpyramid} and~\ref{sizepyramid}. 
	
	\begin{lemma}\label{boundpyramid}
		If $(M, \cS; e_1, \dotsc, e_h)$ is an $(a,q,h,d)$-pyramid, and $h' \in \{0,1, \dotsc, h\}$ 
		is an integer,
		then there is a rank-$(a+h')$ restriction $M'$ of $M$ and a set $\cS' \subseteq \cS$, 
		so that $(M',\cS'; e_1, \dotsc, e_{h'})$ is an $(a,q,h',d)$-pyramid, and 
		$\elem_{M'}(\cS') \ge q^{h'}$. 
	\end{lemma}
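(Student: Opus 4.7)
The plan is a routine three-step combination of Lemmas~\ref{shrinkpyramid}, \ref{restrictpyramid}, and \ref{sizepyramid}: I will truncate, restrict, and then count.

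First I truncate. The tuple $(M, \cS; e_1, \dotsc, e_{h'})$ is itself an $(a,q,h',d)$-pyramid, essentially by definition: the skewness of each $S \in \cS$ to $\{e_1, \dotsc, e_{h'}\}$ follows a fortiori from its skewness to $\{e_1, \dotsc, e_h\}$, the lifting clause is only demanded for $i \in \{0,\dotsc,h'-1\}$, which is a subrange of $\{0,\dotsc,h-1\}$, and the $d$-thickness condition is intrinsic to each $S$. (This can also be viewed as the $i = j$ degenerate case of Lemma~\ref{shrinkpyramid}.)

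Next I restrict. Fix any $S \in \cS$ and apply Lemma~\ref{restrictpyramid} to the truncated pyramid. This produces the restriction $M' := M|\cl_M(S \cup \{e_1, \dotsc, e_{h'}\})$ of $M$, of rank exactly $a + h'$ because $S$ is skew to $\{e_1, \dotsc, e_{h'}\}$ in $M$, together with an $(a,q,h',d)$-pyramid $(M', \cS'; e_1, \dotsc, e_{h'})$, where $\cS' := \{S' \in \cS : S' \equiv_{M \con \{e_1, \dotsc, e_{h'}\}} S\}$ is contained in $\cS$ and contains $S$. It is important that Lemma~\ref{restrictpyramid} outputs a restriction rather than a more general minor; this is exactly what lets us satisfy the conclusion that $M'$ be a restriction of $M$.

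Finally I count. Applying Lemma~\ref{sizepyramid} to the pyramid just obtained yields
\[ \elem_{M'}(\cS') \ge q^{h'}\,\elem_{M' \con \{e_1,\dotsc,e_{h'}\}}(\cS'). \]
Every $S' \in \cS'$ is similar to $S$ in $M \con \{e_1,\dotsc,e_{h'}\}$ by construction, and because $M'$ is a restriction of $M$, closure in $M' \con \{e_1,\dotsc,e_{h'}\}$ is the trace of closure in $M \con \{e_1,\dotsc,e_{h'}\}$, so the similarity descends and the right-hand factor above equals $1$. Hence $\elem_{M'}(\cS') \ge q^{h'}$, as required. There is no serious obstacle here — the only points that need a moment's attention are the verification that the object supplied by Lemma~\ref{restrictpyramid} is a restriction (not just a minor) of the original $M$, and the descent of similarity from a contraction of $M$ to the corresponding contraction of its restriction.
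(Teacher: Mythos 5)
Your proof is correct and follows exactly the route the paper sketches (the paper omits the proof, saying it follows routinely from Lemmas~\ref{shrinkpyramid}, \ref{restrictpyramid} and~\ref{sizepyramid}): truncate to height $h'$, apply Lemma~\ref{restrictpyramid}, and count via Lemma~\ref{sizepyramid}, with your direct truncation (dropping $e_{h'+1},\dotsc,e_h$ without contracting) being precisely what makes $M'$ a genuine restriction of $M$. One small slip in an aside: the truncation is not the $i=j$ case of Lemma~\ref{shrinkpyramid} (with $i=j$ that lemma removes nothing), but this is harmless since you verify the truncated tuple is a pyramid directly from the definition.
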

	
	\section{Building a Pyramid}\label{u1s5}
	
	In this section, we show that a large $d$-scattered set allows us to either find a 
	$d$-firm subset of large rank in a minor, or a large pyramid. The majority of this 
	argument lies in an ugly technical lemma, which we will adapt into two useful corollaries. 
	To understand this lemma, it may be helpful to read it where $a_0 = 1$ and $a = 2$. In 
	this case, $\cX$ is a dense $d$-scattered set of points; the first outcome corresponds to a 
	$d$-point line minor whose points are in $\cX$, the second to a $(1,q+1,h,d)$-pyramid minor, 
	and the third to a minor containing a $d$-scattered collection of lines built from $\cX$. 
	
	\begin{lemma}\label{hardtechnical}
		There is an integer-valued function $f_{\ref{hardtechnical}}(a,d,h,m)$ so that, for 
		all integers $a_0,a,d,h,q$ with $a \ge a_0 \ge 1$, $d\ge 1$, $q \ge 1$, $h \ge 0$, 
		and $m \ge 0$, if $M$ is a matroid with $r(M) \ge f_{\ref{hardtechnical}}(a,d,h,m)$, 
		and a set $\cX \subseteq \cR_{a_0}(M)$ is $d$-scattered in $M$ and satisfies 
		$\elem_M(\cX) \ge r(M)^{f_{\ref{hardtechnical}}(a,d,h,m)} q^{r(M)}$, then either:
		\begin{enumerate}[(i)]
			\item\label{hti} 
				there is a minor $N$ of $M$ and a set $\cY \subseteq \cX \cap \cR_a(N)$ so that
				$r_N(\cY) > a$, and $\elem_N(\cY) \ge d^{r(N)-a}$, and 
				$\cl_N(\cY)$ is $d$-thick in $N$, or
			\item\label{htii} 
				$M$ has an $(a_0,q+1,h,d)$-pyramid minor $P$ with $\cS_P \subseteq \cX$, or 
			\item\label{htiii} 
				there exists an integer $a_1$ with $a_0 < a_1 \le a$, a minor $M'$ of $M$ 
				with $r(M') \ge m$, and a set $\cX' \subseteq \cR_{a_1}(M')$ so that 
				$\cX'$ is $d$-scattered in $M'$, and $\elem_{M'}(\cX') \ge r(M')^m q^{r(M')}$. 
		\end{enumerate}
	\end{lemma}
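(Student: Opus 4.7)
The proof proceeds by induction on $h$. In the base case $h = 0$, the density bound forces $\cX \ne \varnothing$, and every $S \in \cX$ is $d$-thick since $\cX$ is $d$-scattered in $M$, so a single set yields an $(a_0, q+1, 0, d)$-pyramid and outcome (ii) holds.

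For the inductive step, the strategy is to build the top level of the pyramid via Lemma~\ref{augmentpyramid}, with the shorter pyramid supplied by the inductive hypothesis applied in a well-chosen contraction. Using Lemma~\ref{pickcontract}, select a nonloop $e \in E(M)$ so that $\cX^* := \cX \cap \cR_{a_0}(M \con e)$ retains almost all of $\elem_M(\cX)$. Partition $\cX^*$ into $\cX_{>q}$ (those in $M \con e$-similarity classes of size exceeding $q$) and $\cX_{\le q}$ (the rest); the analysis splits according to which part carries the bulk of $\elem_M(\cX^*)$.

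If $\cX_{>q}$ dominates, I pick one representative from each $M \con e$-similarity class meeting $\cX_{>q}$ to form a simple $\tilde\cX \subseteq \cR_{a_0}(M \con e)$, still exponentially dense in $r(M \con e)$ up to a factor of $q$. A $d$-minimal cover of $\tilde\cX$ in $M \con e$ is $d$-scattered by Lemma~\ref{getscattered}; if this cover contains a set of rank $a_1 > a_0$ carrying substantial weight, that gives outcome (iii) with $M' = M \con e$ directly, and otherwise $\tilde\cX$ itself is $d$-scattered in $M \con e$. Applying the inductive hypothesis with $h$ replaced by $h-1$ to $(M \con e, \tilde\cX)$, outcomes (i) or (iii) there transfer verbatim to $M$, while outcome (ii) yields an $(a_0, q+1, h-1, d)$-pyramid whose sets lie in $\tilde\cX \subseteq \cX_{>q}$, which Lemma~\ref{augmentpyramid} lifts to a height-$h$ pyramid in $M$.

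The awkward branch is when $\cX_{\le q}$ dominates. Taking one representative per small similarity class gives a simple $\cY \subseteq \cR_{a_0}(M \con e)$ whose density is still exponential in $r(M \con e)$, but a $d$-minimal cover of $\cY$ that happens to be pure rank-$a_0$ leaves us essentially in the same situation in $M \con e$, rather than decrementing the induction parameter $h$. I expect this to be the main obstacle: the fix is to use the pair $(h, r(M))$ as the joint induction potential, so that this branch strictly decreases $r(M)$. The nested recursion then terminates either at a $d$-minimal cover containing a set of rank above $a_0$ (outcome (iii)), or at a residual density so large that Lemma~\ref{firmdensity} extracts a $d$-firm subset of rank exceeding $a_0$, whose thickness via Lemma~\ref{thicknessfirmness} supplies outcome (i). Calibrating the growth of $f_{\ref{hardtechnical}}(a,d,h,m)$ to absorb the constant-factor losses at each step of both recursions (Lemma~\ref{pickcontract}, the partition into $\cX_{>q}, \cX_{\le q}$, and the pass to representatives) is the bulk of the technical work.
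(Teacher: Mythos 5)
Your overall architecture matches the paper's (induction on $h$, choosing $e$ by Lemma~\ref{pickcontract}, a $d$-minimal-cover analysis to stay scattered, a split by $M\con e$-similarity class size, and Lemma~\ref{augmentpyramid} to add a level), but the mechanism you propose for the ``awkward branch'' has a genuine quantitative gap. If $\cX_{\le q}$ carries, say, half of the density, the best you get in $M \con e$ is $\elem_{M\con e} \ge \tfrac12(1-\tfrac{a_0}{r})r^pq^{r-1}$, while the same-exponent hypothesis at rank $r-1$ demands $(r-1)^pq^{r-1}$; the required inequality $\tfrac12(1-\tfrac{a_0}{r}) \ge \left(1-\tfrac1r\right)^p$ fails for every $r$ large compared with $p$, since the right side tends to $1$. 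So a joint induction on $(h,r(M))$ with ``constant-factor losses absorbed by calibrating $f$'' cannot close: each unit drop in rank lowers the threshold only by a factor of roughly $q\,(r/(r-1))^p$, the factor $q$ is already consumed by passing from set-counts to similarity-class counts, and the leftover polynomial gain $(r/(r-1))^p$ is too small to pay even a single factor of $2$, let alone losses accumulated over up to $r$ nested rank steps. The paper avoids this by never making a ``which half dominates'' split: it takes $M$ minor-minimal subject to the hypotheses (equivalently, assumes the full-strength rank-$(r-1)$ statement is unavailable), which yields the sharp bound $\elem_{M\con e}(\cY) < (r-1)^pq^{r-1}$, and then subtracts, using $r^p-(r-1)^p \ge p(r-1)^{p-1}$ together with the class-size bound $d$ (from scatteredness in $M$ and Lemma~\ref{densityabsscattered}) to show $\elem_{M\con e}(\cY_{>q}) \ge (r-1)^{p_{h-1}}q^{r-1}$. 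The point is that the losses are charged against the gap between the exponents $p_h$ and $p_{h-1}$ in the $h$-induction, not against rank descent; that is the idea missing from your write-up.

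Two smaller but real issues in your cover analysis: outcome (iii) requires $a_0 < a_1 \le a$ and a \emph{family} of scattered rank-$a_1$ sets of size at least $r(M')^mq^{r(M')}$, so a cover set ``carrying substantial weight'' is neither sufficient nor, when its rank exceeds $a$, even admissible for (iii); cover sets of rank greater than $a$ must be routed to outcome (i) via Lemma~\ref{maximalminimalcover} (which gives the density $d^{r-a_0}$ and the $d$-thickness of the flat), exactly as in the paper's first claim. Also, for Lemma~\ref{augmentpyramid} you must define $\cX_{>q}$ with respect to a family that is simple in $M$ (the paper first passes to a maximal $M$-simple subset $\cY$), and the bound relating $|\cX_{>q}|$ to $\elem_{M\con e}(\cX_{>q})$ is the class-size bound $d$ coming from scatteredness, not $q$; these are fixable, but as written they hide steps where the bookkeeping actually lives.
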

	\begin{proof}
		Let $a_0,a,d,h$ and $q$ be positive integers such that $a \ge a_0$, and let 
		$m \ge 0$ be an integer. Let $p_0 = 0$, and for each $h > 0$, recursively define 
		$p_h$ to be an integer so that 
		\[d^{-1}q^r(r-1)^{p_h-1}\left(p_h-3a(1+d^a)\right) \ge (r-1)^{p_{h-1}}q^{r-1},\]
		for all integers $r \ge 2$, and so that $p_h \ge \max(2,d,m+1)$.	
		
		We will show for all $h$ that if $M$ is a matroid with $r(M) \ge p_h$, and a 
		set $\cX \subseteq \cR_{a_0}(M)$ is $d$-scattered in $M$ and satisfies 
		$\elem_M(\cX) \ge r(M)^{p_h}q^{r(M)}$, then one of the three outcomes holds for $M$; 
		thus, setting  $f_{\ref{hardtechnical}}(a,d,h,m) = p_h$ will satisfy the lemma. 
		Our proof is by induction on $h$. If $h = 0$, then, since $(M,\{X\};)$ is an 
		$(a_0,q+1,0,d)$-pyramid for any $X \in \cX$, the outcome (\ref{htii}) holds. 
		Now fix $h > 0$ and suppose that the result holds for smaller $h$. 
		Let $p = p_h$, and $M$ be minor-minimal so that $r(M) \ge p$ and 
		there exists a $d$-scattered $\cX \subseteq \cR_{a_0}(M)$ such that 
		$\elem_{M'}(\cX) \ge r(M)^{p}q^{r(M)}$. Let $r = r(M)$. If $r = p$, 
		then $\elem_M(\cX) \ge p^pq^p > d^{p-a_0}$; this contradicts $d$-scatteredness 
		of $\cX$ by Lemma~\ref{densityabsscattered}, so we may assume that $r > p$. 
		
		By Lemma~\ref{pickcontract}, there is some $e \in E(M)$ so that 
		$\elem_M(\cX \cap \cR_{a_0}(M \con e)) \ge \left(1 - \tfrac{a_0}{r}\right)\elem_M(\cX)$. 
		Let $\cX' = \cX \cap \cR_{a_0}(M \con e)$ and $\cF$ be a $d$-minimal cover 
		of $\cX'$ in $M \con e$ such that $|\cF|$ is maximized. We may assume that all 
		sets in $\cF$ are flats of $M \con e$. The set $\cF$ is simple in $M \con e$; 
		for each $i \ge 1$, let $\cF_i = \cF \cap \cR_i(M \con e)$, noting that each $\cF_i$ is
		$d$-scattered in $M \con e$ by Lemma~\ref{getscattered}. We will henceforth 
		assume that~(\ref{hti}) and~(\ref{htiii}) do not hold. 
		
		\begin{claim}
			$\cF = \bigcup_{a_0 \le i \le a}\cF_i$.
		\end{claim}
		\begin{proof}[Proof of claim:]
			Every $F \in \cF$ must contain a set in $\cX'$, so $\cF$ contains no set of 
			rank less than $a_0$. If $\cF$ contains a set $F$ of rank greater than $a$, 
			then $\{F\}$ is a $d$-minimal cover of $\{X \in \cX': X \subseteq F\}$ in 
			$M \con e$, so by Lemma~\ref{maximalminimalcover}, the matroid $(M \con e)|F$ 
			and the set $\{X \in \cX': X \subseteq F\}$ satisfy (\ref{hti}), a contradiction.
		\end{proof}
		\begin{claim} 
			There is a set $\cX'' \subseteq \cX'$ that is $d$-scattered in $M \con e$ and 
			satisfies $\elem_M(\cX'') \ge q^r\left(r^p-a(1+d^a)r^{p-1}\right)$		
		\end{claim}
		\begin{proof}[Proof of claim:]			
			Each $X \in \cX'$ is contained in some set in $\cF$; for each $F \in \cF$, let 
			$\cX_F = \{X \in \cX': X \subseteq F\}$. By Lemma~\ref{densityabsscattered}, each 
			$F \in \cF$ satisfies $\elem_M(\cX_F) = \elem_{M|F}(\cX_F) \le d^{r(M|F)-a_0} \le
			 d^{a+1-a_0} \le d^a$. Moreover, each $\cF_i$ is simple and $d$-scattered in 
			 $M \con e$, so we may assume that $|\cF_i| \le r^mq^r$ for all $i > a_0$, as~(\ref{htiii}) does
			 not hold. Since $\cX'$ is the union of the $\cX_F$, we have 
			\begin{align*}
				\sum_{F \in \cF_{a_0}}(\elem_M(\cX_F)) &\ge \elem_{M}(\cX') -
				\sum_{\substack{a_0 < i \le a \\ F \in \cF_i}}\elem_M(\cX_F) \\
				&\ge (1 - \tfrac{a_0}{r})r^{p}q^{r} - d^a\sum_{a_0 < i \le a}|\cF_i| \\
				&\ge (1 - \tfrac{a}{r})r^pq^r - ad^ar^mq^r \\
				&\ge q^r(r^p - a(1+d^a)r^{p-1}),
			\end{align*}
			as $p-1 \ge m$. Let $\cX'' = \bigcup_{F \in \cF_{a_0}}\cX_F$. Now, since 
			$\cF_{a_0}$ is simple in $M \con e$, and every set in $\cF_{a_0}$ and every 
			set in $\cX'$ has rank $a_0$ in $M \con e$, no set in $\cX''$ is contained
			in two different sets in $\cF_{a_0}$. Therefore 
			$\elem_M(\cX'') = \sum_{F \in \cF_{a_0}}(\elem_M(\cX_F))$. Moreover, 
			$d$-minimality of $\cF$ implies that $\cF_{a_0} = \{\cl_{M \con e}(X): X \in \cX''\}$ 
			is a $d$-minimal cover of $\cX''$ in $M \con e$. Therefore $\cX''$ is 
			$d$-scattered in $M \con e$, giving the claim. 
		\end{proof}
		Let $\cY$ be a maximal subset of $\cX''$ that is simple in $M$. Since 
		$\cX''$ is $d$-scattered in both $M$ and $M \con e$, so is $\cY$. We have 
		$r(M \con e) = r-1 \ge p$, so minimality of $M$ gives 
		$|\elem_{M \con e}(\cY) < (r(M \con e))^pq^{r(M \con e)} = (r-1)^pq^{r-1}$. 
		Let $\cY_{> q} = \{Y \in \cY: |[Y]_{M \con e} \cap \cY| > q\}$ and 
		$\cY_{\le q} = \cY - \cY_{> q}$. Since $\cY$ is $d$-scattered and simple in $M$,
		Lemma~\ref{densityabsscattered} gives $|[Y]_{M \con e} \cap \cY| = 
		|\{Y' \in \cY: Y' \subseteq \cl_M(Y \cup \{e\})\}| \le d^{(a_0 +1)-a_0} = d$ 
		for all $Y \in \cY$. Now
		\begin{align*}
			q^r(r^p - a(1+d^a)r^{p-1}) &\le |\cY| \\
			&=|\cY_{> q}| + |\cY_{\le q}| \\
			&\le d \elem_{M \con e}(\cY_{>q}) + q \elem_{M \con e}(\cY_{\le q})\\
			&\le d \elem_{M \con e}(\cY_{>q}) + q \elem_{M \con e}(\cY) \\
			&< d \elem_{M \con e}(\cY_{>q})+ q(r-1)^pq^{r-1}.
		\end{align*}
		Rearranging this inequality yields 
		\begin{align*}
			\elem_{M \con e}(\cY_{>q}) &\ge d^{-1}q^r(r^p - (r-1)^p - a(1+d^a)r^{p-1}) \\
			&\ge d^{-1}q^r(p(r-1)^{p-1} - a(1+d^a)r^{p-1})\\
			&= d^{-1}q^r(r-1)^{p-1}\left(p - a(1+d^a)\left(\tfrac{r}{r-1}\right)^{p-1}\right).
		\end{align*}
		By hypothesis $r \ge p$, so $\left(\tfrac{r}{r-1}\right)^{p-1} \le 
		\left(\tfrac{p}{p-1}\right)^{p-1} \le 2.718\dotsc < 3$. This gives
		\begin{align*}
			\elem_{M \con e}(\cY_{>q}) &> d^{-1}q^r(r-1)^{p-1}\left(p - 3a(1+d^a)\right)\\	
			&\ge r(M \con e)^{p_{h-1}}q^{r(M \con e)}
		\end{align*}
		by definition of $p = p_h$. We may assume that (\ref{hti}) 
		and~(\ref{htiii}) both fail for $M \con e$ and $\cY_{>q}$; thus, by 
		induction on $h$, the matroid $M \con e$ has an $(a_0,q+1,h-1,d)$-pyramid 
		minor $P'$ with $\cS_{P'} \subseteq \cY_{> q}$. By Lemma~\ref{augmentpyramid}, 
		$M$ has an $(a_0,q+1,h,d)$-pyramid minor $P$ with 
		$\cS_P \subseteq \cY_{>q} \subseteq \cX$, which gives (\ref{htii}).
	\end{proof}
	
	Our first corollary, which will be used in the next section, finds a pyramid or a firm 
	set of rank greater than $a$, starting with a collection of thick rank-$a$ sets. The 
	corollary is obtained by specialising to the case where $a = a_0$, thus rendering the 
	third outcome impossible. 
	
	\begin{corollary}\label{buildpyramidcor}
		There is an integer-valued function $f_{\ref{buildpyramidcor}}(a,d,h)$ so that, 
		for any integers $a,d,h,q$ with $h \ge 0$, $a \ge 1$, $d \ge 2$ and $q \ge 1$, 
		if $M$ is a matroid such that $r(M) \ge f_{\ref{buildpyramidcor}}(a,d,h)$, 
		and $\cX \subseteq \cR_a(M)$ is a set such that every $X \in \cX$ is $d$-thick in $M$,
		and $\elem_M(\cX) \ge r(M)^{f_{\ref{buildpyramidcor}}(a,d,h)}q^{r(M)}$, then either
		\begin{enumerate}[(i)]
 			\item\label{bpc1} 
 				there is a minor $N$ of $M$, and a set $\cY \subseteq \cX \cap \cR_a(N)$ so that
 				$r_N(\cY) > a$ and $\cY$ is $d$-firm in $N$, or
 			\item\label{bpc2} 
 				$M$ has an $(a,q+1,h,d)$-pyramid minor $P$, with $\cS_P \subseteq \cX$. 
		\end{enumerate}
	\end{corollary}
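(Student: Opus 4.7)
The plan is to deduce the corollary from Lemma~\ref{hardtechnical} by specialising to $a_0 = a$. With this choice, outcome~(iii) of that lemma (which demands an integer $a_1$ with $a_0 < a_1 \le a$) becomes vacuous, leaving only the two outcomes matching those of the corollary. Accordingly I would set $f_{\ref{buildpyramidcor}}(a,d,h) := f_{\ref{hardtechnical}}(a,d,h,0)$. The main task is then to convert the weaker hypothesis—that each $X \in \cX$ is only $d$-thick—into the $d$-scattered input required by Lemma~\ref{hardtechnical}.

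For this conversion I would let $\cF$ be a $d$-minimal cover of $\cX$ in $M$ consisting of flats. Every $F \in \cF$ contains some $X \in \cX$, so $r_M(F) \ge a$. First suppose some $F \in \cF$ satisfies $r_M(F) > a$. By $d$-minimality of $\cF$, the singleton $\{F\}$ is a $d$-minimal cover, in $M|F$, of $\cX_F := \{X \in \cX : X \subseteq F\}$, so Lemma~\ref{maximalminimalcover} yields $\elem_M(\cX_F) \ge d^{r_M(F)-a}$; then Lemma~\ref{firmdensity} (using $d \ge 2$) extracts a $d$-firm $\cY \subseteq \cX_F$ in $M|F$ with $r_{M|F}(\cY) > a$, and taking $N = M|F$ gives outcome~(\ref{bpc1}).

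In the remaining case, every $F \in \cF$ has $r_M(F) = a$, so $\cF \subseteq \cR_a(M)$ and $\cF$ is $d$-scattered in $M$ by Lemma~\ref{getscattered}. Since a rank-$a$ subset of a rank-$a$ flat spans it, the collections $\cX_F$ partition the similarity classes of $\cX$, giving $\elem_M(\cF) = |\cF| = \elem_M(\cX) \ge r(M)^{f_{\ref{buildpyramidcor}}(a,d,h)}q^{r(M)}$. Invoking Lemma~\ref{hardtechnical} on $\cF$ with $a_0 = a$ and $m = 0$, I would obtain either a minor $N$ with $\cY \subseteq \cF \cap \cR_a(N)$, $r_N(\cY) > a$, and $\elem_N(\cY) \ge d^{r(N)-a}$ (from which Lemma~\ref{firmdensity} applied in $N$ extracts a $d$-firm subset of rank exceeding $a$), or an $(a,q+1,h,d)$-pyramid minor $P$ with $\cS_P \subseteq \cF$.

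To finish, I would translate the output from $\cF$ back to $\cX$ by replacing each $F$ appearing in it with a chosen $X_F \in \cX_F$. Since $X_F \equiv_M F$, this similarity persists in every minor containing both sets, and $d$-thickness of $X_F$ passes to minors by Lemma~\ref{maintainthickness}; every similarity, dissimilarity, and rank condition in the output is therefore preserved under the substitution. The firm subset becomes a $d$-firm subset of $\cX$ in $N$ of rank greater than $a$, giving outcome~(\ref{bpc1}); the pyramid becomes an $(a,q+1,h,d)$-pyramid on the same $P$ with its $\cS$ inside $\cX$, giving outcome~(\ref{bpc2}). The genuine obstacle is just this hypothesis-conversion bookkeeping: introducing the auxiliary cover $\cF$ to fit Lemma~\ref{hardtechnical}, handling the degenerate case of a rank-exceeding flat in $\cF$ directly, and carrying the lemma's conclusions back to a statement purely about $\cX$.
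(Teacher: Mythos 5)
Your proposal is correct and takes essentially the same route as the paper: the same choice $f_{\ref{buildpyramidcor}}(a,d,h)=f_{\ref{hardtechnical}}(a,d,h,0)$, the same application of Lemma~\ref{hardtechnical} with $a_0=a$ and $m=0$ (rendering its third outcome vacuous), and the same handling of a rank-exceeding set in a $d$-minimal cover via Lemmas~\ref{maximalminimalcover} and~\ref{firmdensity}. The only cosmetic difference is that the paper observes that in the remaining case $\cX$ itself is $d$-scattered and feeds $\cX$ directly into Lemma~\ref{hardtechnical}, whereas you feed in the cover $\cF$ and translate the output back to $\cX$; this detour is harmless, since similarity classes and $d$-thickness of the chosen representatives persist in minors.
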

	\begin{proof}
		Let $a,d,h,q$ be integers with $h \ge 0$, $a \ge 1$, $d \ge 2$ and $q \ge 1$. 
		Set $f_{\ref{buildpyramidcor}}(a,d,h) = f_{\ref{hardtechnical}}(a,d,h,0)$. Let 
		$M$ be a matroid such that $r(M) \ge f_{\ref{buildpyramidcor}}(a,d,h)$, and 
		$\cX \subseteq \cR_a(M)$ be a set such that every $x \in \cX$ is $d$-thick in $M$
		and $\elem_M(\cX) \ge r(M)^{f_{\ref{buildpyramidcor}}(a,d,h)}q^{r(M)}$. 
		We consider two cases: 
		
		\emph{Case 1:} $\cX$ is $d$-scattered in $M$.
		
		By definition of $f_{\ref{buildpyramidcor}}$, we can apply Lemma~\ref{hardtechnical} 
		to $\cX$. Since there is no integer $a_1$ with $a < a_1 \le a$, we know that
		~\ref{hardtechnical}(\ref{htiii}) cannot hold. If \ref{hardtechnical}(\ref{htii}) 
		holds, then we have our result. We may thus assume that \ref{hardtechnical}(\ref{hti}) 
		holds; now outcome~(\ref{bpc1}) follows from Lemma~\ref{firmdensity}. 

		\emph{Case 2:} $\cX$ is not $d$-scattered in $M$.
		
		By definition, $\{\cl_M(X): X \in \cX\}$ is not a $d$-minimal cover of $\cX$ in $M$, 
		so any $d$-minimal cover of $\cX$ contains a set $F$ of rank greater than $a$. Let 
		$\cX_F = \{X \in \cX: X \subseteq F\}$. The cover $\{F\}$ must be a $d$-minimal cover 
		of $\cX_F$, so by Lemma~\ref{maximalminimalcover} applied to $M|F$ and $\cX_F$, we
		have $\elem_M(\cX_F) \ge d^{r(M|F)-a}$. Again, outcome~(\ref{bpc1}) follows from
		Lemma~\ref{firmdensity}. 
	\end{proof}
	
	The second corollary essentially reduces Theorem~\ref{halfwaypoint} to the case where
	$M$ is a pyramid:
	
	\begin{corollary}\label{halfwayreduction}
		There is an integer-valued function $f_{\ref{halfwayreduction}}(a,b,d,h)$ so that, 
		for any integers $a,b,d,h,q$ with $q \ge 1$, $d \ge 2$, $h \ge 0$, and $1 \le a < b$, 
		if $M \in \fU(a,b)$ satisfies $r(M) > 1$ and 
		$\tau_a(M) \ge r(M)^{f_{\ref{halfwayreduction}}(a,b,d,h)}q^{r(M)}$,
		then there is some $a_0 \in \{1, \dotsc, a\}$ such that $M$ has an 
		$(a_0,q+1,h,d)$-pyramid minor. 
	\end{corollary}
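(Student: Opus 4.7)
\emph{Proof proposal.} The plan is to first extract a large $d'$-scattered subfamily of $\cR_{a_0}(M)$ for some $a_0\in\{1,\dots,a\}$, then iterate Lemma~\ref{hardtechnical} at most $a$ times, pushing the common rank strictly upward at each step. Each application will yield either the desired pyramid (outcome~(ii)), a contradiction with $M\in\fU(a,b)$ (outcome~(i)), or a passage to the next iteration (outcome~(iii)). Set $d' = \max(d, \binom{b}{a})$, so that by Lemmas~\ref{maintainthickness} and~\ref{thickuniform} no minor of $M$ is $d'$-thick of rank exceeding $a$. Recursively define $m_{a+1} = 0$ and $m_i = f_{\ref{hardtechnical}}(a, d', h, m_{i+1})$ for $i = a, a-1, \dots, 1$, and take $f_{\ref{halfwayreduction}}(a,b,d,h)$ to be $m_1$ plus enough slack to absorb a factor of $a$ from pigeonholing and to rule out the boundary case $r(M) < m_1$ via Theorem~\ref{kdensity}.

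By Lemma~\ref{coveringcompare} applied with $d'$, any $d'$-minimal cover $\cF$ of $M$ satisfies $|\cF| \ge \tau_a(M)$ and consists entirely of sets of rank in $\{1,\dots,a\}$; by Lemma~\ref{getscattered}, every subset of $\cF$ is $d'$-scattered in $M$. Pigeonholing over the rank classes $\cF\cap\cR_i(M)$ produces some $a_0\in\{1,\dots,a\}$ with $|\cF\cap\cR_{a_0}(M)|\ge \tau_a(M)/a \ge r(M)^{m_{a_0}}q^{r(M)}$, the last inequality being guaranteed by the choice of $f_{\ref{halfwayreduction}}$.

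Now iterate. The invariant is: a minor $N$ of $M$, an integer $a_0\in\{1,\dots,a\}$, and a $d'$-scattered set $\cX\subseteq \cR_{a_0}(N)$ with $\elem_N(\cX)\ge r(N)^{m_{a_0}}q^{r(N)}$ and $r(N)\ge m_{a_0}$. Apply Lemma~\ref{hardtechnical} to $(N,\cX)$ with parameters $(a,d',h,m_{a_0+1})$. Outcome~(ii) delivers an $(a_0,q+1,h,d')$-pyramid minor of $N$, which is also an $(a_0,q+1,h,d)$-pyramid minor of $M$ because $d'\ge d$, completing the proof. Outcome~(i) produces a $d'$-thick set $\cl_{N'}(\cY)$ of rank exceeding $a$ in some minor $N'$ of $N$; contracting within it (using Lemma~\ref{maintainthickness}) gives a rank-$(a+1)$, $d'$-thick minor of $M$, and Lemma~\ref{thickuniform} combined with $d'\ge\binom{b}{a}$ then forces a $U_{a+1,b}$-minor, contradicting $M\in\fU(a,b)$. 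Outcome~(iii) yields a minor $N'$ of $N$ and a $d'$-scattered $\cX'\subseteq\cR_{a'}(N')$ with $a_0<a'\le a$ and $\elem_{N'}(\cX')\ge r(N')^{m_{a_0+1}}q^{r(N')}\ge r(N')^{m_{a'}}q^{r(N')}$, which restores the invariant with $(N',\cX',a')$. Since $a_0$ strictly increases and is bounded by $a$, the process must terminate via outcome~(ii) within $a$ rounds.

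The main obstacle is the parameter recursion: one must design the $m_i$ so that every application of Lemma~\ref{hardtechnical} at common rank $a_0$ provides, in case~(iii), a density and a residual rank sufficient to invoke the lemma again at rank $a_0+1$. The nested recurrence $m_i = f_{\ref{hardtechnical}}(a,d',h,m_{i+1})$ is constructed for precisely this purpose; the remaining bookkeeping (the factor of $a$ lost in the initial pigeonhole and the treatment of small $r(M)$) is routine.
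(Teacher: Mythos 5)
Your proposal is correct and follows essentially the same route as the paper: the same choice $d'=\max(d,\binom{b}{a})$, the same recursion of exponents through $f_{\ref{hardtechnical}}$, the same initial step via a $d'$-minimal cover with Lemmas~\ref{coveringcompare} and~\ref{getscattered} plus a pigeonhole over ranks, and the same three-outcome analysis (Lemma~\ref{thickuniform} killing outcome (i), outcome (ii) finishing, outcome (iii) restarting at a strictly larger rank). The only cosmetic difference is that the paper defines $p_i=\max(p_{i+1},f_{\ref{hardtechnical}}(a,d',h,p_{i+1}))$ to guarantee the monotonicity $m_{a_0+1}\ge m_{a'}$ that your outcome-(iii) step quietly uses; adding that max (or noting $f_{\ref{hardtechnical}}$ may be assumed at least its last argument) closes this trivial bookkeeping point.
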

	\begin{proof}
		Let $a,b,d,h,q$ be integers with $q \ge 1$, $d\ge 2$, $h \ge 0$ and $1 \le a < b$. 
		Let $d' = \max(d,\binom{b}{a})$. We define a sequence of integers $p_{a+1}, \dotsc, p_1$; 
		let $p_{a+1} = 0$, and for each $1 \le i \le a$, recursively set 
		$p_i = \max(p_{i+1},f_{\ref{hardtechnical}}(a,d',h,p_{i+1}))$. Note that 
		$p_1 \ge p_2 \ge \dotsc \ge p_{a+1}$. Set $f_{\ref{halfwayreduction}}(a,b,h,d)$ to 
		be an integer $p \ge p_1$ so that $a^{-1}(d')^{-a}r^{p} \ge r^{p_1}$
		for all integers $r \ge p_1$. Let $M$ be a matroid with $r(M) \ge p$ and 
		$\tau_a(M) \ge r(M)^pq^{r(M)}$.
		\begin{claim}
			Let $1 \le i \le a$. If $r(M) \ge p_i$, and $\cX \subseteq \cR_i(M)$ is $d'$-scattered in 
			$M$ and satisfies $\elem_M(\cX) \ge r(M)^{p_i}q^{r(M)}$, then $M$ has an $(a_0,q+1,h,d)$-pyramid 
			minor for some $i \le a_0 \le a$. 
		\end{claim}
		\begin{proof}[Proof of claim:]
			By definition of $p_i$, we can apply Lemma~\ref{hardtechnical} to $\cX$ in $M$. If 
			\ref{hardtechnical}(\ref{hti}) holds, then $M$ has a $d'$-thick minor of rank greater than $a$. 
			Since $d' \ge \binom{b}{a}$, this contradicts $M \in \fU(a,b)$ by Lemma~\ref{thickuniform}. 
			Since $d' \ge d$, \ref{hardtechnical}(\ref{htii}) gives the claim, so we may assume 
			that~\ref{hardtechnical}(\ref{htiii}) holds. If $i = a$, this is impossible, so the claim is proven. 
			Otherwise, we have the hypotheses for a minor of $M$ and some larger $i \le a$, so the 
			claim holds by induction. 
		\end{proof}
		Let $\cF$ be a $d'$-minimal cover of $M$. Clearly $\cF$ is simple. By Lemma~\ref{coveringcompare}, 
		we have $\wt^{d'}_M(\cF) \ge \tau_a(M)$ and every set in $\cF$ has rank at most $a$, so 
		$\elem_M(\cF) = |\cF| \ge (d')^{-a}\wt^{d'}_M(\cF) \ge (d')^{-a}r(M)^{p}q^{r(M)}$.  For each 
		$i \in \{1,\dotsc,a\}$, let $\cF_i = \cF \cap \cR_i(M)$. By a majority argument, some $i$ 
	 satisfies $\elem_M(\cF_i) = |\cF_i| \ge a^{-1}|\cF| \ge a^{-1}(d')^{-a}r(M)^pq^{r(M)} 
		\ge r(M)^{p_1}q^{r(M)} \ge r(M)^{p_i}q^{r(M)}$. The set $\cF_i$ is $d'$-scattered in $M$ by 
		Lemma~\ref{getscattered}, and $r(M) \ge p \ge p_i$, so the result follows from the claim. 
	\end{proof}

	\section{Finding Firmness}\label{u1s6}
	
	This section explores what can be done with a large collection $\cX$ of thick rank-$a$ sets in a 
	matroid $M$ with no large projective geometry as a minor. We prove a single lemma which finds a 
	large subcollection of $\cX$ that is firm in a minor of $M$. When $a = 1$ this is equivalent to 
	finding a large rank-$2$ uniform minor, and thus Theorems~\ref{gkpoly} and~\ref{gkexp} appear in 
	the base case of this lemma. 
		
	\begin{lemma}\label{easycase}
		There is an integer-valued function $f_{\ref{easycase}}(a,d,n,q)$ so that, for any positive 
		integers $a,d,n,q$, if $M$ is a matroid with $r(M) \ge f_{\ref{easycase}}(a,d,n,q)$, and 
		$\cX \subseteq \cR_a(M)$ is a set so that every $X \in \cX$ is $f_{\ref{easycase}}(a,d,n,q)$-thick 
		in $M$ and $\elem_M(\cX) \ge r(M)^{f_{\ref{easycase}}(a,d,n,q)}q^{r(M)}$, then either 
		\begin{enumerate}[(i)]
			\item\label{ec2} $M$ has a $\PG(n-1,q')$-minor for some $q' > q$, or
			\item\label{ec1} there is a minor $N$ of $M$ and a set $\cY \subseteq \cX \cap \cR_a(N)$ 
			so that $r_N(\cY) > a$ and $\cY$ is $d$-firm in $N$.
		\end{enumerate}
	\end{lemma}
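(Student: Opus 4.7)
The plan is to induct on $a$. The base case $a = 1$ is handled by Theorem~\ref{gkexp}: since $\cX$ consists of nonloop singletons and the thickness hypothesis is vacuous in rank one, I pass to $M_0 := \si(M|\bigcup\cX)$, a minor of $M$ with $\tau_1(M_0) = \elem_M(\cX) \ge r(M_0)^{f_{\ref{easycase}}(1,d,n,q)}q^{r(M_0)}$. Fix $\ell$ large compared to both $d$ and the threshold implicit in Theorem~\ref{gkexp}. If $M_0 \in \cU(1,\ell)$, Theorem~\ref{gkexp} produces a $\PG(n-1,q')$-minor with $q' > q$, giving outcome~(\ref{ec2}); otherwise $M_0$ has a $U_{2,\ell}$-minor $N$, whose $\ell$ pairwise non-parallel elements all lie in $\cX$ and form a $d$-firm subset of $\cX \cap \cR_1(N)$ of rank $2$, giving outcome~(\ref{ec1}).

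For the inductive step $a \ge 2$, I apply Corollary~\ref{buildpyramidcor} with pyramid parameters $a$ and $d$ and with height $h$ chosen large in $a, d, n, q$. Either this directly yields outcome~(\ref{ec1}), or it produces an $(a, q+1, h, d)$-pyramid $P = (M', \cS; e_1, \dotsc, e_h)$ with $\cS \subseteq \cX$; by Lemmas~\ref{restrictpyramid} and~\ref{sizepyramid} I may assume $r(M') = a+h$ and $\elem_{M'}(\cS) \ge (q+1)^h$. The aim is to locate a nonloop $e$ of $M'$ and a large subfamily $\cS' \subseteq \cS$ with $e \in \cl_{M'}(S)$ for every $S \in \cS'$: setting $M'' := M'/e$, each $S \in \cS'$ becomes a $d$-thick rank-$(a-1)$ set in $M''$ by Lemma~\ref{maintainthickness}, and for $h$ large enough $\elem_{M''}(\cS')$ still exceeds the inductive threshold $r(M'')^{f_{\ref{easycase}}(a-1,d,n,q)}q^{r(M'')}$. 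The inductive hypothesis applied to $\cS'\subseteq\cR_{a-1}(M'')$ then yields either the required $\PG(n-1,q')$-minor, or a minor $N$ of $M''$ and a $d$-firm $\cY_e\subseteq\cS'\cap\cR_{a-1}(N)$ with $r_N(\cY_e) > a-1$. In the latter case, let $N'$ be the minor of $M'$ obtained by the same operations as $N$ but without contracting $e$; since $e \in \cl_{M'}(S)$ for every $S \in \cS'$, one checks that $r_{N'}(\cY'') = r_N(\cY'')+1$ uniformly for all $\cY''\subseteq\cY_e$, so $\cY_e\subseteq\cX\cap\cR_a(N')$ is $d$-firm in $N'$ with $r_{N'}(\cY_e) > a$.

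The main obstacle is the pigeonhole step identifying the common element $e$: a direct averaging $\sum_{S \in \cS}|\cl_{M'}(S)| \ge a|\cS|$ divided over $|E(M')|$ is useless unless $|E(M')|$ is polynomially bounded in $r(M')$, which the hypotheses do not guarantee. The way around this is to invoke Lemma~\ref{findskew} applied to $\cS$ with a threshold $t$ polynomial in $a$: its second conclusion directly produces the element $e$ lying in the closure of at least $(at)^{-1}|\cS|$ sets in a minor of rank at least $r(M')-at$, while its first conclusion — a mutually skew subfamily of $\cS$ of size $t$ — combined with the $d$-thickness of each summand ought to yield a $d$-firm subfamily of rank greater than $a$ either directly (via an argument tailored to the direct-sum structure) or after a further iteration. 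The final piece is parameter bookkeeping: $h$ must be chosen as a function of $a, d, n, q$ large enough to absorb both the $(at)^{-1}$ pigeonhole loss and the inductive threshold $f_{\ref{easycase}}(a-1,d,n,q)$, which is possible since $(q+1)^h$ grows strictly faster than any polynomial multiple of $q^{a+h}$.
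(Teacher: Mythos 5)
Your overall architecture matches the paper's: induct on $a$, dispatch the base case with the Geelen--Kabell theorems, and in the inductive step build a pyramid via Corollary~\ref{buildpyramidcor}, apply Lemma~\ref{findskew}, and in the ``common element $e$'' branch contract $e$ and invoke the hypothesis at $a-1$, lifting the resulting firm family back up. That branch is essentially the paper's second claim, and it is fine modulo two repairable details: for $q=1$ you must use Theorem~\ref{gkpoly} rather than Theorem~\ref{gkexp} (which requires $q\ge 2$), and the assertion that $\elem$ of the surviving family ``still exceeds the inductive threshold'' after the contractions is not automatic from cardinality --- similarity classes can collapse --- and has to be handled as in the paper via Lemma~\ref{firmdensityrel}, whose alternative outcome is conveniently outcome~(ii) of Lemma~\ref{easycase} itself.

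The genuine gap is your treatment of the first outcome of Lemma~\ref{findskew}. You assert that a mutually skew subfamily of $\cS$ of size $t$, ``combined with the $d$-thickness of each summand, ought to yield a $d$-firm subfamily of rank greater than $a$ either directly \ldots or after a further iteration.'' This is false as stated: mutual skewness is the antithesis of firmness. If $\cW$ is a mutually skew family of rank-$a$ sets then every proper subfamily has rank $a|\cW'| < r_M(\cW)$, so $\cW$ is not even $2$-firm once $|\cW|\ge 3$; likewise the union of skew thick sets is only $2$-thick (it is covered by two of its summands' spans), so no argument ``tailored to the direct-sum structure'' can produce the thick or firm rank-$(>a)$ object, and no further iteration helps because a skew family carries no density at all. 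This case is precisely the technical heart of the paper's proof, and it is where the pyramid is actually used: the pyramid is taken with two height scales $h$ and $h'$ and with thickness $d'\ge d^{h-a}+1$; the skew family $\cW=\{W_1,\dotsc,W_s\}$ with $s=d^{h-a}$ lives in $M'\con J$ with $J=\{e_1,\dotsc,e_h\}$; for each $i$ the sub-pyramid on the similarity class of $W_i$ (Lemma~\ref{restrictpyramid}) yields three mutually skew similar sets $V_i,Z_i,Z_i'$ (again via Lemma~\ref{findskew} plus the inductive hypothesis); and then a delicate sequence of contractions --- each step contracting $a-1$ elements of $Z_i'$ together with an element of $Z_i$ chosen, using the $(s+1)$-thickness of $Z_i$, to avoid the flats $\cl(V_k\cup V_i)$ and $\cl(J)$ --- pushes all $s$ sets $V_i$ into the rank-$h$ flat $\cl(J)$ while keeping them pairwise dissimilar rank-$a$ sets. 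Only then does Lemma~\ref{firmdensity}, applied in that bounded-rank minor where $\elem \ge d^{h-a} \ge d^{r-a}$, produce the firm set of rank exceeding $a$. Nothing in your proposal supplies this compression mechanism, and your choice of pyramid thickness parameter $d$ (rather than roughly $d^{h-a}+1$) would not even permit the choice of the avoiding element; so as written the skew branch of your argument does not go through.
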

	\begin{proof}
		Let $n,q,d$ be positive integers. Set 
		\[f_{\ref{easycase}}(1,d,n,q) = \max(2,f_{\ref{gkpoly}}(d,n),\lceil\alpha_{\ref{gkexp}}(d,n,q)\rceil).\] 
		We now define $f_{\ref{easycase}}(a,d,n,q)$ for general $a$ recursively; for each $a > 1$, suppose 
		that $t = f_{\ref{easycase}}(a-1,d,n,q)$ has been defined. Let $h$ be an integer so that $h \ge 3a + t$ and
		 $(3a)^{-1}d^{-3a}(q+1)^h \ge (h+a)^{t}q^{h+a}.$
		 Let $s = d^{h-a}$ and let $h'$ be an integer so that $h' \ge as + t$ and such that
		 $(as)^{-1}d^{-as}(q+1)^{h'} \ge (h'+a)^{t}q^{h'+a};$
		 Let $d' = \max(d,s+1)$ and set $f_{\ref{easycase}}(a,d,n,q) = \max(d',f_{\ref{buildpyramidcor}}(a,d',h+h')).$
		 
		 Let $a \ge 1$ be an integer, $M$ be a matroid with $r(M) \ge f_{\ref{easycase}}(a,d,n,q)$, 
		 and $\cX \subseteq \cR_a(M)$ be a set whose elements are all $f_{\ref{easycase}}(a,d,n,q)$-thick in 
		 $M$, satisfying $\elem_M(\cX) \ge r(M)^{f_{\ref{easycase}}(a,d,n,q)}$. We may assume that $M = M|\cX$; we show that $M$ satisfies 
		 (\ref{ec2}) or (\ref{ec1}), first resolving the case where $a = 1$, and proceeding by induction on $a$. 
		
		\begin{claim}If $a = 1$, then $M$ satisfies (\ref{ec2}) or (\ref{ec1}). 
		\end{claim}
		\begin{proof}[Proof of claim:]
		 Every $X \in \cX$ is a rank-$1$ set, and therefore 
		 $\tau_1(M)  \ge r(M)^{f_{\ref{easycase}}(1,d,n,q)}q^{r(M)}$. 
		 %Note that since $r(M) \ge 2$, we have $\tau_1(M') > 1$, so $r(M') > 1$.
		 
		   If $q = 1$ then $r(M)^{f_{\ref{easycase}}(1,d,n,q)} \ge r(M)^{f_{\ref{gkpoly}}(d,n)}$, 
		   so if (\ref{ec2}) does not hold, then $M$ has a $U_{2,d}$-minor by Theorem~\ref{gkpoly}. 
		   This minor corresponds to a simple subset of $\cX$ in a rank-$2$ 
		   minor of $M$, containing $d$ pairwise dissimilar rank-$1$ sets. This is a rank-$2$, $d$-firm 
		   subset of $\cX$ in a minor of $M$, giving (\ref{ec1}). 
		   
		   If $q > 1$ then
		   $\tau_1(M) \ge f_{\ref{easycase}}(1,d,n,q)q^{r(M)} \ge \alpha_{\ref{gkexp}}(d,n,q)q^{r(M)},$
		   so the result follows from Theorem~\ref{gkexp} in a similar way to the $q = 1$ case. 
		\end{proof}
		
		Now, assume inductively that $a > 1$, and that $f_{\ref{easycase}}(\hat{a},\hat{d},n,q)$ as defined satisfies 
		the lemma for all $\hat{a} < a$, and all $\hat{d}$. Suppose further that (\ref{ec2}) 
		does not hold for $M$.
		
		\begin{claim}
			$M$ has an $(a,q+1,h+h',d')$-pyramid minor $P$ so that $\cS_P \subseteq \cX$.
		\end{claim}
		\begin{proof}[Proof of claim:]
			By definition of $f_{\ref{easycase}}(a,d,n,q)$, we know that 
			$r(M') \ge f_{\ref{buildpyramidcor}}(a,d,h+h')$, 
			$\elem_{M'}(\cX) \ge r(M)^{f_{\ref{buildpyramidcor}}(a,d',h+h')}q^{r(M)}$, and all sets in 
			$\cX$ are $d'$-thick in $M$; we can therefore apply Corollary~\ref{buildpyramidcor} to $M$. 
			Since $d' \ge d$, outcome~\ref{buildpyramidcor}(\ref{bpc1}) does not hold, 
			giving \ref{buildpyramidcor}(\ref{bpc2}) and hence the claim.
		\end{proof}
		Let $P = (M',\cS; e_1, \dotsc, e_{h+h'})$. By Lemma~\ref{boundpyramid}, we may assume that 
		$r(M') = h' + h+a$. Let $J = \{e_1, \dotsc, e_h\}$. By Lemma~\ref{shrinkpyramid}, 
		$(M' \con J, \cS; e_{h+1}, \dotsc, e_{h+h'})$ is an $(a,q+1,h',d)$-pyramid, so by Lemma~\ref{sizepyramid}, 
		there is a set $\cS' \subseteq \cS$ such that $|\cS'| \ge (q+1)^{h'}$ and $\cS'$ is simple in $M' \con J$. 	
		\begin{claim}
			There is a set $\cW \subseteq \cS'$ so that $|\cW| = s$ and $\cW$ is mutually skew in $M' \con J$. 		
		\end{claim}
		\begin{proof}[Proof of claim:]
			Suppose there is no such $\cW$.  By Lemma~\ref{findskew}, there is a minor $N$ of $M' \con J$ such that 
			$r(N) \ge r(M' \con J) - as$, a set $\cY \subseteq \cS' \cap \cR_a(N)$ such that 
			$|\cY| \ge (as)^{-1}|\cS'|$, and a nonloop $e$ of $N$ so that $e \in \cl_{N}(Y)$ for all $Y \in \cY$.
			We will apply the inductive hypothesis on $a$ to $N \con e$. 
			
			The set $\cY \subseteq \cS'$ is simple in $M \con J$, so by Lemma~\ref{firmdensityrel}, either (\ref{ec1}) holds or we have 
			\begin{align*}
			\elem_N(\cY) &\ge d^{r(N)-r(M' \con J)}\elem_{M' \con J}(\cY) \ge d^{-as}|\cY| \ge (as)^{-1}d^{-as}|\cS'| \\ 
			&\ge (as)^{-1}d^{-as}(q+1)^{h'}  \ge (h'+a)^{t}q^{h'+a}.
			\end{align*} Since 
			$r(N \con e) < r(M' \con J) = a+h'$, this gives 
			$\elem_{N}(\cY) \ge r(N \con e)^{t}q^{r(N \con e)}.$
			Let $\cY_0 = \{Y - \{e\}: Y \in \cY\}$. Because $e \in \cl_N(Y)$ for all $Y \in \cY$, we have $\cY_0 \subseteq \cR_{a-1}(N \con e)$, 
			and $\elem_{N \con e}(\cY_0) = \elem_{N}(\cY)$. Moreover, 
			$r(N \con e) \ge r(M' \con J)-as -1 \ge h'-as \ge t =f_{\ref{easycase}}(a-1,d,n,q)$, so by 
			the inductive hypothesis, there is a minor $N'$ of $N \con e$, and a set 
			$\cY_0' \subseteq \cY_0 \cap \cR_{a-1}(N')$ such that $r_{N'}(\cY') \ge a$, and 
			$\cY_0'$ is $d$-firm in $N'$. Let $\cY' = \{Y_0 \cup \{e\}: Y_0 \in \cY_0'\}$. If $N' = N \con (C \cup \{e\}) \del D$, where $e \notin C$ and
			$C \cup \{e\}$ is independent in $N$, then it is simple to check that 
			$\cY' \subseteq \cR_a(N \con C)$, that $r_{N \con C}(\cY') > a$, and that $\cY'$ is $d$-firm 
			in $N \con C$. This gives (\ref{ec1}). 			

		\end{proof}
		Let $\cW = \{W_1, \dotsc, W_s\}$, and for each $i \in \{1, \dotsc, s\}$, let 
		$\cS_i = \{S \in \cS: S \equiv_{M' \con J} W_i\}$. By Lemma~\ref{restrictpyramid} there is, 
		for each $i \in \{1, \dotsc, s\}$, a rank-$(a+h)$ restriction $M_i$ of $M'$ such that 
		$(M_i, \cS_i; e_1, \dotsc, e_h)$ is an $(a,q+1,h,d')$-pyramid. 
		
		\begin{claim}
			For each $i \in \{1, \dotsc, s\}$ there are distinct sets $V_i,Z_i,Z_i' \in \cS_i$ such that 
			$\{V_i,Z_i,Z_i'\}$ is mutually skew in $M_i$.
		\end{claim}
		\begin{proof}[Proof of claim:]
		By Lemma~\ref{sizepyramid}, $\cS_i$ has a subset $\cS'$ of size $(q+1)^h$ that is 
		simple in $M_i$. If there is a subset of $\cS'_i$ of size $3$ that is skew in $M_i$, 
		then the claim follows. Otherwise, by Lemma~\ref{findskew}, there is a minor $N_i$ of $M_i$, 
		with $r(N_i) \ge r(M_i)-3a$, a set $\cY \subseteq \cS'_i \cap \cR_a(N_i)$ such that 
		$|\cY| \ge (3a)^{-1}d^{-3a}|\cS'_i|$, and a nonloop $e$ of $N_i$ so that $e \in \cl_{N_i}(Y)$ 
		for all $Y \in \cY$. The proof is now very similar to that of the previous claim, 
		following from the definition of $h$.
		\end{proof}
		
		Let $\cV = \{V_1, \dotsc, V_s\}$. Since $V_i \equiv_{M \con J} W_i$ for each $i$, the set 
		$\cV$ is mutually skew in $M' \con J$. This last claim uses $Z_i$ and $Z_i'$ to contract the 
		elements of $\cV$, one by one, into the span of $J$ without reducing their rank, while 
		maintaining the `skewness' and structure of the elements of $\cV$ not yet contracted:
		
		\begin{claim}
			For each $i \in \{0, \dotsc, s\}$, there is a minor $N_i$ of $M$ such that 
		\begin{enumerate}[(a)]
				\item\label{ecca} $\{V_{i+1}, \dotsc, V_s\}$ is mutually skew in $N_i \con J$, 
				\item\label{eccb} $N_i |E(M_j) = M_j$ for each $j \in \{i+1, \dotsc, s\}$, and
				\item\label{eccc} $\{V_1, \dotsc, V_i\} \subseteq \cR_a(N_i|\cl_{N_i}(J))$, 
					and $\{V_1, \dotsc, V_i\}$ is simple in $N_i$.
		\end{enumerate}
		\end{claim}
		\begin{proof}[Proof of claim:]
			When $i = 0$, the claim is clear for $N_0 = M'$. Suppose inductively that $1 \le i \le s$
			and that the claim holds for smaller $i$. We will construct $N_i$ by contracting a rank-$a$ 
			set of $M_i = N_{i-1}|E(M_i)$, choosing one of its elements from $Z_i$ and the remaining $a - 1$ from $Z_i'$. By definition, $Z_i'$ and $V_i$ are similar to $W_i$ in $M_i \con J$, 
			so $r_{M_i \con J}(Z_i') = r_{M_i \con J}(V_i) = a$; Let $I \subseteq Z_{i}'$ be an independent 
			set of size $(a-1)$ in $M_i \con J$. So $\{V_i,Z_i\}$ is a skew pair of rank-$a$ sets in 
			$M_i \con I$, and $r(M_i \con I) = h+a-(a-1) = h+1$. Since $I$ is independent in $M_i \con J$, 
			it is skew to $J$ in $M_i$, so $r_{M_i \con I}(J) = h$. Moreover, 
			$r_{M_i \con (J \cup I)}(V_i) = r_{M_i \con (J \cup I)}(Z_i) = r_{M_i \con (J \cup I)}(Z_i') = 1$, 
			so neither $Z_i$ nor $V_i$ is contained in $\cl_{M_i \con I}(J)$. 
			
			By the inductive hypothesis, $(N_{i-1} \con I)|E(M_i) = M_i \con I$, so we can extend the 
			observations just made about $M_i \con I$ to apply in $N_{i-1} \con I$. Therefore, in the 
			matroid $N_{i-1} \con I$,  $\{V_i,Z_i\}$ is a skew pair of rank-$a$ sets, each contained 
			in the rank-$(h+1)$ set $E(M_i)$, which itself contains the rank-$h$ set $J$, and 
			$\cl_{N_{i-1} \con I}(J)$ does not contain $Z_i$ or $V_i$.

			For each $1 \le k < i$, let $F_k = V_i$ if $r_{N_{i-1} \con I}(V_k \cup V_i) > a+1$, 
			and $F_k = \cl_{N_{i-1} \con I}(V_k \cup V_i)$ otherwise. Since $V_i$ and $Z_i$ are skew sets 
			of rank $a > 1$ in $N_{i-1} \con I$, and $F_k$ is a flat of rank at most $a+1$ containing $V_i$, 
			it follows that $Z_i \not\subseteq F_k$, so $r_{N_{i-1} \con I}(F_k \cap Z_i) < a$. Also, 
			the set $\cl_{N_{i-1} \con I}(J)$ does not contain $Z_i$. The set $Z_i$ is $(d' \ge s+1)$-thick 
			in $N_{i-1} \con I$, and there are at most $s-1$ possible $k$, so there is some $f \in Z_i$ 
			that is not in any of the sets $F_k$, and not in $\cl_{N_{i-1} \con I}(J)$. Set 
			$N_i = N_{i-1} \con (I \cup \{f\})$. By choice of $f$, we have $r_{N_i}(J) = h = r_{N_{i-1}}(J)$, 
			so $I \cup \{f\}$ is skew to $J$ in $N_{i-1}$; we now show that $N_i$ 
			satisfies (\ref{ecca}), (\ref{eccb}) and (\ref{eccc}).  

			\begin{enumerate}[(a)]
				\item %a
					We have $I \cup \{f\} \subseteq Z_i \cup Z_i'$. The sets $Z_i$ and $Z_i'$ are both 
					similar to $V_i$ in $M_i \con J = (N_{i-1}\con J)|E(M_i)$, so 
					$I \cup \{f\} \subseteq \cl_{N_{i-1} \con J}(V_i)$. The collection $\{V_i, \dotsc, V_s\}$ is mutually skew 
					in $N_{i-1} \con J$ by the inductive hypothesis, so $\{V_{i+1}, \dotsc, V_s\}$ is mutually skew 
					in $N_{i-1} \con (J \cup I \cup \{f\}) = N_i \con J$. 
				\item %b
					Let $j \in \{i+1,\dotsc,s\}$. Since $(M_j, \cS_j; e_1, \dotsc, e_h)$ is an $(a,q+1,h,d)$-pyramid 
					and $V_j \in \cS_j$, the set $J \cup V_j$ is spanning in $M_j$ and $J$ is skew to $V_j$ 
					in $M_j$. As we saw in (\ref{ecca}), $I \cup \{f\}$ is skew to $J$ in $N_{i-1}$, 
					and is skew to $V_j$ in $N_{i-1} \con J$. Now, $M_j = N_{i-1}|E(M_j)$ and 
					$M_i = N_{i-1}|E(M_i)$, so 
					\begin{align*}
						r_{N_{i-1}}((I \cup \{f\}) \cup (J \cup V_j)) 
						&= r_{N_{i-1} \con J}(I \cup \{f\} \cup V_j) + r_{N_{i-1}}(J) \\
						&= r_{N_{i-1} \con J}(I \cup \{f\}) + r_{N_{i-1} \con J}(V_j) + r_{N_{i-1}}(J)\\
						&= r_{N_{i-1}}(I \cup \{f\}) + r_{N_{i-1}}(V_j) + r_{N_{i-1}}(J)\\
						&= r_{N_{i-1}}(I \cup \{f\}) + r_{N_{i-1}}(V_j \cup J). 
					\end{align*}
					Therefore, $I \cup \{f\}$ and $V_j \cup J$ are skew in $N_{i-1}$. Since $V_j \cup J$ 
					is spanning in $M_j$, this gives $N_i|E(M_j) = N_{i-1}|E(M_j) = M_j$. 
				\item %c 
					Since $I \cup \{f\}$ is skew to $J$ in $N_{i-1}$, it is clear that
					 $\{V_1, \dotsc, V_{i-1}\} \subseteq \cR_{a}(N_i|\cl_{N_i}(J))$ and that 
					 $\{V_1, \dotsc, V_{i-1}\}$ is simple in $N_i$. Moreover, $V_i$ is a rank-$a$ set 
					 that is skew to $Z_i \cup Z_i'$ in $N_{i-1}$, and therefore is skew to $I \cup \{f\}$, 
					 so $r_{N_i}(V_i) = a$. It therefore remains to show that $V_i$ is not similar in $N_i$ 
					 to any of $V_1, \dotsc, V_{i-1}$. 
		
					Suppose for a contradiction that $V_i \equiv_{N_i} V_k$ for some $k \in \{1,\dotsc,i\}$. Either 
					$V_i$ and $V_k$ are similar in $N_{i-1} \con I$, or $V_i$ and $V_k$ lie in a common 
					rank-$(a+1)$ flat $F$ of $N_{i-1} \con I$ and contracting $f \in F$ makes the two sets 
					similar in $N_i$. In the first case, the fact that $V_k$ and $V_i$ are skew in $N_{i-1}\con J$ gives 
					$0 = r_{N_{i-1} \con (I \cup  V_k)}(V_i) \ge r_{N_{i-1} \con (I \cup J)}(V_i) 
					= r_{N_{i-1} \con J}(V_i) - r_{N_{i-1}}(I) = a - (a-1) = 1$, a contradiction. 
					In the second case, we have $f \in \cl_{N_{i-1} \con I}(V_i \cup V_k)$, 
					which does not occur by choice of $f$.	
			\end{enumerate}
		\end{proof}
		
		Now, let $N = N_s|\cl_{N_s}(J)$. We have $r(N) \le h$, and $\cV$ is a simple subset of $\cR_a(N)$ 
		by construction, so $\elem_N(\cV) = |\cV| = s = d^{h-a}$. Also, $\cV \subseteq \cX$ and $d' \ge d$, 
		so every $V \in \cV$ is $d$-thick in $N$. (\ref{ec1}) now follows by applying 
		Lemma~\ref{firmdensity} to $\cV$ in $N$. 
	\end{proof}
		
	\section{Upgrading a Pyramid}\label{u1s7}
	
	The goal of this section is to prove that a sufficiently large pyramid minor will be enough to 
	prove Theorem~\ref{halfwaypoint}. We show that for very large $h$ and $d$, an $(a_0,q+1,h,d)$-pyramid 
	will either contain a thick set of rank greater than $a$, or a large projective geometry over $\GF(q')$ 
	for some $q' > q$. We first prove this when $a_0 = a$, and then show that for $a_0 < a$ we can find a 
	large pyramid as a minor with a larger $a_0$, thereby `upgrading' our pyramid.  
	
	An important intermediate object is an $(a_0,q+1,\cdot,\cdot)$-pyramid $P$ `on top of' a very firm set 
	$\cX \subseteq \cS_P$ with rank greater than $a_0$. We construct such objects using the results in the 
	previous section; this is the reason that we need to exclude a projective geometry. 
	
	We upgrade a pyramid of height $h$ on top of a firm set by `lifting' the firm set one level up the 
	pyramid $h$ times, sacrificing a large amount of firmness at each step. Our next two lemmas give
	the machinery needed for this; the first simply lifts a firm set up a pyramid of height $1$:
	
	\begin{lemma}\label{climbinductive}
		Let $a_0,a,q,d,d'$ be integers with $1 \le a_0 \le a$, $d,d' \ge 2$, and $q \ge 2$. 
		If $(M, \cS; e)$ is an $(a_0,q,1,d')$-pyramid, and $\cX \subseteq \cS$ is $d^{q+2}$-firm 
		in $M \con e$ and satisfies $r_{M \con e}(\cX) = a$, then either 
		\begin{enumerate}[(i)]
			\item\label{ci1} 
				there exists $\cY \subseteq \cS$ so that $r_M(\cY) = a+1$ and 
				$\cY$ is $d$-firm in $M$, or 
			\item\label{ci2} 
				there exist sets $\cX_1, \dotsc, \cX_q \subseteq \cS$ such that 
				\begin{itemize}
					 \item each $\cX_i$ is $d$-firm in $M$ and $r_M(\cX_i) = a$, and 
					 \item the $\cX_i$ are pairwise dissimilar in $M$, and each is 
					 		skew to $\{e\}$ in $M$, and similar to $\cX$ in $M \con e$.
				\end{itemize}
		\end{enumerate}
	\end{lemma}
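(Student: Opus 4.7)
The plan is to greedily build the $q$ sets $\cX_1, \ldots, \cX_q$ required by outcome~(\ref{ci2}), and to show that any failure of the construction yields outcome~(\ref{ci1}). Set $F = \cl_M(\cX \cup \{e\})$, the rank-$(a+1)$ flat carved out by $\cX$ and $e$, and $G = \cl_{M \con e}(\cX)$, the corresponding rank-$a$ flat of $M \con e$. For each $X \in \cX$, the pyramid condition at $i = 0$ provides $q$ pairwise $M$-dissimilar rank-$a_0$ sets in $\cS$, each $M \con e$-similar to $X$; all such lifts lie in $\cS \cap F$ and are skew to $\{e\}$ in $M$. Gather them (together with $\cX$ itself) into a pool $\cT \subseteq \cS \cap F$.

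The key geometric observation is that a subset $\cY \subseteq \cS \cap F$ with $\cl_{M \con e}(\cY) = G$ satisfies $r_M(\cY) = a$ precisely when $\cl_M(\cY)$ is a rank-$a$ flat $H$ of $F$ avoiding $e$, and two such subsets are $M$-dissimilar iff they span distinct such flats. Thus producing outcome~(\ref{ci2}) reduces to selecting $q$ pairwise distinct rank-$a$ flats $H_1, \ldots, H_q$ of $F$ with $e \notin H_j$, such that for each $j$ the elements of $\cT \cap H_j$ project (via $M \con e$-similarity) onto a $d$-firm subset of $G$ that hits many $M \con e$-classes of $\cX$. I would proceed greedily: at step $j$ each $X \in \cX$ still has at least $q - (j-1)$ lifts outside the $M$-closures of $\cX_1, \ldots, \cX_{j-1}$, and a pigeonhole over rank-$a$ flats of $F$ avoiding $e$ yields some new flat $H_j$ receiving lifts from at least a $d^{-1}$ fraction of the $M \con e$-classes of $\cX$. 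Since $\cX$ is $d^{q+2}$-firm in $M \con e$ and at most a factor of $d$ in firmness is lost per step, each resulting $\cX_j$ remains $d$-firm in $M$.

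If the greedy construction succeeds all $q$ times we obtain outcome~(\ref{ci2}). If it fails at some step $j \le q$, then the unused lifts are spread so thinly across the rank-$a$ flats of $F$ avoiding $e$ that no such flat absorbs a $d^{-1}$ fraction of $\cX$'s $M \con e$-classes. Consequently, for most pairs of unused lifts the union spans rank $a+1$ in $M$; combining this with the slack of at least $d^{3}$ in firmness remaining at the failure step, and applying a Lemma~\ref{firmdensity}-style argument inside the rank-$(a+1)$ flat $F$, I would extract a rank-$(a+1)$ $d$-firm subset $\cY \subseteq \cT$, giving outcome~(\ref{ci1}). The main obstacle I expect is the careful bookkeeping of the firmness budget across the $q$ greedy iterations, together with the non-canonicity of assigning a lift to a rank-$a$ flat of $F$ containing it, since a single lift can lie in many such flats and a consistent selection rule must be chosen so that the pigeonhole and firmness counts carry through cleanly.
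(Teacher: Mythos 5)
Your overall dichotomy (either the pyramid's lifts repeatedly concentrate on rank-$a$ flats off $e$, giving the $q$ dissimilar firm sets, or a failure of concentration gives a rank-$(a+1)$ firm set) is in the same spirit as the paper's proof, but the branch that is supposed to deliver outcome~(\ref{ci1}) does not work as you describe it. The step you call a ``pigeonhole over rank-$a$ flats of $F$ avoiding $e$'' is not a pigeonhole: there are unboundedly many such flats, so nothing forces one of them to receive a $d^{-1}$ fraction, and all of the real content is pushed into your failure branch. There, the assertion that ``for most pairs of unused lifts the union spans rank $a+1$'' is false in general: the lifts are rank-$a_0$ sets with $a_0\le a$, so for instance when $a_0=1$ and $a=2$ every pair of lifts has rank at most $2\le a$ no matter how spread out the lifts are. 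Moreover a Lemma~\ref{firmdensity}-style extraction only returns a $d$-firm subset of rank greater than $a_0$, not of rank exactly $a+1$ as~(\ref{ci1}) demands. A second genuine problem is your counting unit: you measure everything in $M\con e$-similarity classes of $\cX$, but the $d^{q+2}$-firmness hypothesis counts elements of $\cX$, and classes can have wildly unequal sizes; a union of many tiny classes can be a large fraction of the classes yet a negligible fraction of $\cX$, so ``a $d^{-1}$ fraction of classes'' gives no rank lower bound via firmness.

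The paper sidesteps both issues with a small device you are missing: index $\cX=\{X^i: i\in I\}$ by its elements, fix for each $i$ the $q$ pairwise $M$-dissimilar lifts $X^i_1,\dotsc,X^i_q$, and for each $j$ look at the single indexed family $\{X^i_j : i\in I_{j-1}\}$, which lies in the rank-$(a+1)$ span of $\cX\cup\{e\}$. The negation of~(\ref{ci1}), applied directly to this family, hands over a subfamily of at least a $d^{-1}$ fraction of the indices having rank at most $a$ --- no flats and no pigeonhole are needed --- and the nested index sets $I\supseteq I_1\supseteq\dotsb\supseteq I_q$ make the firmness bookkeeping transfer back to $\cX$ at element level, losing one factor of $d$ per level, which is exactly why $d^{q+2}$ suffices. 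Pairwise dissimilarity of the resulting $\cX_j$ is then free: if $\cX_j\equiv_M\cX_{j'}$ with $j<j'$, a common index $i\in I_{j'}\subseteq I_j$ gives lifts $X^i_j,X^i_{j'}$ that are $M$-dissimilar but $M\con e$-similar, forcing $e\in\cl_M(\cX_j)$ and contradicting $r_M(\cX_j)=r_{M\con e}(\cX_j)=a$. To salvage your flat-based greedy you would need to (a) switch from class counting to element-indexed counting, and (b) replace the ``pairs plus firmdensity'' extraction by a direct verification that, under the failure condition, one unused lift per element of $\cX$ is $d$-firm of rank $a+1$, splitting low-rank subfamilies according to whether their closure contains $e$ (handled by firmness of $\cX$) or avoids $e$ (handled by the failure condition after extending to a hyperplane of $F$ off $e$); at that point you have essentially rebuilt the paper's argument in heavier notation.
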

	\begin{proof}
		We may assume that $\cX$ is spanning in $M \con e$, so $r(M) = a+1$. Suppose that the 
		first outcome does not hold. Let $I$ be an indexing set for $X$ (i.e. let 
		$\cX = \{X^i: i \in I\}$, with $|I| = |\cX|$). For each $i \in I$, let $X^i_1, \dotsc, X^i_q$ 
		be pairwise dissimilar sets in $\cS$, each similar to $X^i$ in $M$, as given by the 
		definition of a pyramid. 
		
		\begin{claim}
			There are sets $\cX_1, \dotsc, \cX_q \subseteq \cS$ and $I_1, \dotsc, I_q \subseteq I$ 
			such that the following conditions hold:
			\begin{itemize}
				\item for each $j \in \{1,\dotsc,q\}$, we have $\cX_j = \{X^i_j: i \in I_j\}$, 
				\item $I \supseteq I_1 \supseteq I_2 \supseteq \dotsc \supseteq I_q$, and
				\item each $j \in \{1,\dotsc,q\}$ satisfies $|\cX_j| \ge d^{-j}|\cX|$ and $r_M(\cX_j) \le a$. 
			\end{itemize}
		\end{claim}
		\begin{proof}[Proof of claim:]
			We construct the sets in question by induction on $j$. Suppose that $1 \le j < q$, and 
			that the sets $\cX_1, \dotsc, \cX_{j-1}$ and $I_1, \dotsc, I_{j-1}$ have been defined 
			to satisfy the conditions. Let $I_0 = I$, and $\cX_0 = \cX$; note that $|\cX_0| \ge d^0 |\cX|$.  
			As (\ref{ci1}) does not hold, the set $\{\cX^i_j: i \in I_{j-1}\}$ is not a rank-$(a+1)$, 
			$d$-firm set in $M$, so we may assume that there is some 
			$\cX_j  \subseteq \{X_i^j: I \in I_{j-1}\}$ such that $|\cX_j| \ge d^{-1}|\{\cX^i_j: i \in I_{j-1}\}|$ 
			and $r_M(\cX_j) \le a$. Now, 
			$|\cX_j| \ge d^{-1}|\{\cX^i_j: i \in I_{j-1}\}| = d^{-1}|I_{j-1}| = d^{-1}|\cX_{j-1}| \ge d^{-j}|\cX|$. 
			The set $\cX_j$, along with $I_j = \{i \in I_{j-1}: X^i_j \in \cX_j\}$, satisfies the required conditions. 
		\end{proof}
		\begin{claim}\label{cic1}
			For each $j \in \{1,\dotsc,q\}$, the set $\cX_j$ is $d$-firm in $M$ and $r_M(\cX_j) = r_{M \con e}(\cX_j) = a$. 
		\end{claim}
		\begin{proof}[Proof of claim]
			We know that $r_M(\cX_j) \le a$; let $\cX_j' \subseteq \cX_j$ satisfy $|\cX_j'| \ge d^{-1}|\cX_j|$, 
			and let $I_j' = \{i \in I_j: X_j^i \in \cX_j'\}$. Let $\cX' = \{X^i: i \in I_j'\}$. By definition 
			of $\cX$ and $\cX_i$, each set in $\cX'$ is similar in $M \con e$ to a set in $\cX'_j$, and vice versa. 
			We therefore have $|\cX'| = |\cX_j'|$ and $r_{M \con e}(\cX') = r_{M \con e}(\cX_j')$. 
			Now $|\cX'| = |\cX_j'| \ge d^{-1}|\cX_j| > d^{-(q+2)}|\cX|$, and $\cX' \subseteq \cX$, so 
			$d^{q+2}$-firmness of $\cX$ gives $r_{M \con e}(\cX') = r_{M \con e}(\cX) = a$. Therefore 
			\[ a \ge r_M(\cX_j) \ge r_M(\cX'_j) \ge r_{M \con e}(\cX'_j) = r_{M \con e}(\cX') = r_{M \con e}(\cX) = a,\]
			and the claim follows from definition of firmness and the fact that 
			$r_{M \con e}(\cX_j) \ge r_{M \con e}(\cX_j')$. 
		\end{proof}
	
		\begin{claim}
			The sets $\cX_j: j \in \{1, \dotsc, q\}$ are pairwise dissimilar in $M$. 
		\end{claim}
		\begin{proof}[Proof of claim:]
			Suppose not; let $\cX_j$ and $\cX_{j'}$ be similar in $M$, where $1 \le j < j' \le q$. 
			By~\ref{cic1}, $r_M(\cX_j \cup \cX_{j'}) = r_M(\cX_j) = a$. Let $i \in I_{j'}$. We have 
			$X^i_{j'}\in \cX_{j'}$ by definition, and $I_{j'} \subseteq I_j$, so $i \in I_j$ and 
			$X^i_j \in \cX_j$. But $X^i_j$ and $X^i_{j'}$ are dissimilar rank-$a_0$ sets in $M$, 
			each similar to the rank-$a_0$ set $X^i$ in $M \con e$. 
			Therefore, $e \in \cl_M(X^i_j \cup X^i_{j'})$, and so $e \in \cl_M(\cX_j \cup \cX_j') = \cl_M(\cX_j)$. 
			This contradicts the previous claim.
		\end{proof}
				
		By assumption, the set $\cX$ is spanning in the rank-$a$ matroid $M \con e$, and by the second 
		part of~\ref{cic1}, the set $\cX_j$ is also spanning in $M \con e$, so $\cX_j \equiv_{M \con e} \cX$. 
		By the claims above, (\ref{ci2}) follows. 
	\end{proof}
	
	The next lemma iterates the previous one $h$ times to upgrade a pyramid completely - here, 
	$a_0$ is upgraded to $a_1$ in the second outcome: 
	
	\begin{lemma}\label{pyramidbootstrap}
		Let $a_0,a_1,q$ and $d$ be integers with $1 \le a_0 \le a_1$ and $d,q \ge 2$, and let 
		$(M, \cS; e_1, \dotsc, e_h)$ be an $(a_0,q,h,d)$-pyramid. For each $0 \le i \le h$, let 
		$M_i = M \con \{e_1, \dotsc, e_i\}$. If $\cX \subseteq \cS$ is a set so that $r_{M_h}(\cX) = a_1$ 
		and $\cX$ is $d^{(q+2)^h}$-firm in $M_h$, then either
		\begin{enumerate}[(i)]
			\item\label{pbs1} there is an integer $i \in \{1, \dotsc, h\}$  and a set $\cY \subseteq \cS$ so that 
				$\cY$ is $d$-firm in $M_i$, and $r_{M_i}(\cY) > a_1$, or
			\item\label{pbs2} there is a set $\cT$ so that $(M, \cT; e_1, \dotsc, e_h)$ is an $(a_1,q,h,d)$-pyramid. 
		\end{enumerate}
	\end{lemma}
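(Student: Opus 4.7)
Plan: I would induct on $h$. For the base case $h = 0$, the hypothesis gives that $\cX$ is $d$-firm in $M = M_0$ with $r_M(\cX) = a_1$, and every $X \in \cX \subseteq \cS$ is $d$-thick by the pyramid structure, so Lemma~\ref{thicknessfirmness} yields that $\cl_M(\cX)$ is $d$-thick; taking $\cT = \{\cl_M(\cX)\}$ immediately gives an $(a_1,q,0,d)$-pyramid, so outcome~(\ref{pbs2}) holds. For $h \ge 1$, the idea is to climb down the pyramid level by level using Lemma~\ref{climbinductive}.

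For the first climb I view $(M_{h-1}, \cS; e_h)$ as an $(a_0,q,1,d)$-pyramid via Lemma~\ref{shrinkpyramid} and apply Lemma~\ref{climbinductive} to it, with climb's firmness parameter set to $D := d^{(q+2)^{h-1}}$ so that its required $D^{q+2} = d^{(q+2)^h}$ matches our assumption on $\cX$. Climb outcome~(i) returns $\cY \subseteq \cS$ that is $D \ge d$-firm in $M_{h-1}$ with $r_{M_{h-1}}(\cY) = a_1 + 1$, which yields outcome~(\ref{pbs1}) of our lemma with $i = h-1$ as long as $h \ge 2$. Climb outcome~(ii) supplies $q$ collections $\cX_1, \ldots, \cX_q \subseteq \cS$, each $D$-firm in $M_{h-1}$ of rank $a_1$, pairwise dissimilar in $M_{h-1}$ but pairwise similar in $M_h$.

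Iterating, at round $k$ I apply climb to each collection currently living at level $M_{h-k+1}$ via the sub-pyramid $(M_{h-k}, \cS; e_{h-k+1})$, now with climb's firmness parameter dropped to $d^{(q+2)^{h-k}}$ — exactly what the previous round delivered. Assuming no climb returns outcome~(i), after $h$ rounds I obtain a family $\{\cY_\sigma : \sigma \in [q]^h\}$ of subsets of $\cS$, each $d$-firm in $M = M_0$ of rank $a_1$, arranged so that $\cY_\sigma$ and $\cY_{\sigma'}$ are similar in $M_{i+1}$ but dissimilar in $M_i$ precisely when $\sigma$ and $\sigma'$ first differ at position $h-i$. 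Setting $T_\sigma = \cl_M(\cY_\sigma)$ (each $d$-thick by Lemma~\ref{thicknessfirmness}) and $\cT = \{T_\sigma\}$, the branching inherited from the climbs exactly matches the definition of an $(a_1, q, h, d)$-pyramid; skewness of each $T_\sigma$ to $\{e_1, \ldots, e_h\}$ follows because $r_M(\cY_\sigma) = a_1 = r_{M_h}(\cY_\sigma)$.

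The principal obstacle is the final climb (from $M_1$ down to $M_0$), whose outcome~(i) yields a $d$-firm $\cY \subseteq \cS$ in $M_0$ of rank $a_1+1$ — which is not literally outcome~(\ref{pbs1}), as that requires firmness in some $M_i$ with $i \ge 1$ rather than in $M_0$. If $e_1 \notin \cl_M(\cY)$ then $\cY$ is skew to $e_1$ in $M$, so both its rank and its firmness transfer intact to $M_1$, giving outcome~(\ref{pbs1}) with $i = 1$. The harder subcase is $e_1 \in \cl_M(\cY)$: then $F := \cl_M(\cY)$ is a $d$-thick rank-$(a_1+1)$ flat of $M$ containing $e_1$, and my plan is to exploit the pyramid branching of $\cS$ at level $i=0$ together with the firmness of the parent collection in $M_1$ to extract $q$ rank-$a_1$, $d$-thick subsets of $F$ that are skew to $e_1$, pairwise dissimilar in $M$, and pairwise similar in $M_1$; these furnish the topmost branching level of $\cT$ while the previously constructed sub-pyramid in $M_1$ supplies the rest. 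Executing this extraction cleanly is the technical heart of the proof.
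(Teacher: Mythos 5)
Your construction is the same as the paper's: iterate Lemma~\ref{climbinductive} down the levels, applying it to the height-one pyramid $(M_i,\cS;e_{i+1})$ with firmness parameter $d^{(q+2)^i}$ (so that the required $(q+2)$-th power matches what the previous level delivered), and then assemble $\cT$ from the closures of the resulting $d$-firm rank-$a_1$ collections, getting $d$-thickness from Lemma~\ref{thicknessfirmness}; the branching and skewness bookkeeping you describe is exactly what the paper records in its inductive claim (the families $\fX^i$), and your justification of skewness via $r_M(\cY_\sigma)=r_{M_h}(\cY_\sigma)=a_1$ is the right one.

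The only divergence is the boundary case you call the ``technical heart.'' The paper does not treat it as a case at all: whenever \ref{climbinductive}(\ref{ci1}) fires at a level $i\in\{0,\dotsc,h-1\}$ it simply declares outcome~(\ref{pbs1}), including at $i=0$, where the firm set of rank $a_1+1$ lives in $M_0=M$. Read against the stated range $i\in\{1,\dotsc,h\}$ this is an off-by-one slip in the statement rather than something the paper argues around; note that the downstream use in Lemma~\ref{pyramidpromote} is unaffected by allowing $i=0$ (one then works with the height-$h'$ pyramid on the uncontracted matroid, and the subsequent recursion goes through verbatim). So the obstacle you isolated is real only for the literal statement. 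Your $e_1\notin\cl_M(\cY)$ subcase is handled correctly, but your plan for $e_1\in\cl_M(\cY)$ is only a sketch: you would have to manufacture, for every member of $\fX^1$ whose climb fails (not just one), $q$ pairwise dissimilar $d$-firm rank-$a_1$ children inside the relevant flat, skew to $e_1$ and similar in $M_1$ to their parent, and nothing in the $d$-firmness of $\cY$ in $M$ obviously produces these. As a proof of the lemma exactly as printed, that step is a genuine gap; as a proof of what the paper actually establishes and uses (outcome~(\ref{pbs1}) with the firm set allowed to sit in $M_0$), your argument is complete and coincides with the paper's.
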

	\begin{proof}
		Assume that (\ref{pbs1}) does not hold; we will build a pyramid-like structure inductively. 
		\begin{claim}
			For each $i \in \{0,\dotsc,h\}$ there exists a nonempty collection $\fX^i$ of subsets of $\cS$ satisfying the following: 
			\begin{itemize}
				\item $\cl_M(\cX)$ is skew to $\{e_{i+1}, \dotsc, e_h\}$ in $M_i$ for all 
					$\cX \in \fX^i$, 
				\item for all $\cX \in \fX^i$ and $i'$ such that $i \le i' < h$, there exist sets 
					$\cX_1, \dotsc, \cX_q \in \fX^i$, pairwise dissimilar in $M_{i'}$ and each similar to 
					$\cX$ in $M_{i'+1}$, and 
				\item 
					every $\cX \in \fX^i$ satisfies $r_{M_i}(\cX) = a_1$ and is $d^{(q+1)^i}$-firm in $M_i$.
			\end{itemize}
		\end{claim}		
		\begin{proof}[Proof of claim:]
			Let $\fX^h = \{\cX\}$. It is clear that $\fX^h$ satisfies all three conditions. Fix $0 \le i < h$, 
			and suppose that $\fX^{i+1}$ has been defined to satisfy the conditions. Let $\cX \in \fX^{i+1}$. 
			We know that $(M_i,\cS; e_{i+1})$ is an $(a_0,q,1,d)$-pyramid; by the inductive hypothesis, the 
			set $\cX$ satisfies the hypotheses of Lemma~\ref{climbinductive} for this pyramid, and for $d^{(q+2)^i}$. 
			If \ref{climbinductive}(\ref{ci1}) holds, then so does outcome~(\ref{pbs1}) of the current lemma, 
			as $d^{(q+2)^i} \ge d$. Otherwise let $P(\cX) = \{\cX_1, \dotsc, \cX_q\}$, where 
			$\cX_1, \dotsc, \cX_q$ are the sets given by~\ref{climbinductive}(\ref{ci2}). Now 
			$\fX^i = \bigcup_{\cX \in \fX^{i+1}}P(\cX)$ will satisfy the claim, which follows inductively. 
		\end{proof}
		Let $\cT = \{\cl_M(\cX): \cX \in \fX^0\}$. Each set in $\fX^0$ is $d$-firm in $M$, so all sets in 
		$\cT$ are $d$-thick by Lemma~\ref{thicknessfirmness}. It is now clear from the claim that 
		$(M,\cT;e_1, \dotsc, e_h)$ is an $(a,q,h,d)$-pyramid.
	\end{proof}
	
	Having seen that a pyramid on top of a firm set is a useful object, we now show that such an object can be 
	constructed by Lemma~\ref{easycase} by excluding a projective geometry. 

	\begin{lemma}\label{pyramidfindfirm}
		There is an integer-valued function $f_{\ref{pyramidfindfirm}}(a_0,d,n,q,h)$ so that, for any 
		integers $a_0,d,n,q,d',h'$ with $a_0,d,n,q \ge 1$, $h \ge 0$, and 
		$\min(d',h') \ge f_{\ref{pyramidfindfirm}}(a_0,d,n,q,h)$, if $P$ is an $(a_0,q+1,h',d')$-pyramid 
		on a matroid $M$, then either
		\begin{enumerate}[(i)]
			\item\label{pff1}
				$M$ has a $\PG(n-1,q')$-minor for some $q' > q$, or
			\item\label{pff2}
				there is a minor $M'$ of $M$, an $(a_0,q+1,h,d)$-pyramid \[(M',\cS';e_1, \dotsc, e_h)\] 
				such that $\cS' \subseteq \cS_P$, and a set $\cY \subseteq \cS'$ such that $\cY$ is 
				$d$-firm in $M' \con \{e_1, \dotsc, e_h\}$ and $r_{M' \con \{e_1, \dotsc, e_h\}}(\cY) > a_0$. 
		\end{enumerate}
	\end{lemma}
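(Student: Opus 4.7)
The plan is to apply Lemma~\ref{easycase} at the top of a height-$h$ truncation of the given pyramid, then use Lemma~\ref{minorpyramid} to lift the resulting firm set back into a pyramid structure on $\cS_P$.

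First, apply Lemma~\ref{boundpyramid} to replace $P$ by a rank-$(a_0+h')$ restriction $(M^*, \cS^*; e_1, \dotsc, e_{h'})$ where $M^*$ is a restriction of $M$ and $\elem_{M^*}(\cS^*) \ge (q+1)^{h'}$. Bounding the rank of $M^*$ is essential so that the polynomial-times-$q^r$ density bound in Lemma~\ref{easycase} is actually reachable. Observe that truncating the list of contraction elements preserves the pyramid definition, so $(M^*, \cS^*; e_1, \dotsc, e_h)$ is itself an $(a_0, q+1, h, d')$-pyramid; set $M_h = M^* \con \{e_1, \dotsc, e_h\}$. Each $S \in \cS^*$ lies in $\cR_{a_0}(M_h)$ by skewness and is $d'$-thick in $M_h$ by Lemma~\ref{maintainthickness}, and applying Lemma~\ref{sizepyramid} to the sub-pyramid $(M_h, \cS^*; e_{h+1}, \dotsc, e_{h'})$ yields $\elem_{M_h}(\cS^*) \ge (q+1)^{h'-h}$, which is exponential in $h'-h$.

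Next, let $t = f_{\ref{easycase}}(a_0, d, n, q)$ and define $f_{\ref{pyramidfindfirm}}(a_0, d, n, q, h)$ to be an integer large enough that $\min(d', h') \ge f_{\ref{pyramidfindfirm}}$ forces all of $d' \ge \max(t,d)$, $r(M_h) = a_0 + h' - h \ge t$, and the density bound $(q+1)^{h'-h} \ge (a_0 + h'-h)^{t} q^{a_0 + h' - h}$. The last inequality reduces to $(1+1/q)^{h'-h} \ge (a_0+h'-h)^{t} q^{a_0}(q+1)^{a_0}$, which holds once $h'-h$ is large enough because the left side grows exponentially. Then apply Lemma~\ref{easycase} to $M_h$ and $\cS^*$: either outcome~\ref{easycase}(\ref{ec2}) produces a $\PG(n-1,q')$-minor of $M_h$ (and hence of $M$) with $q' > q$, giving outcome~(\ref{pff1}), or outcome~\ref{easycase}(\ref{ec1}) supplies a minor $N$ of $M_h$ and a set $\cY \subseteq \cS^* \cap \cR_{a_0}(N)$ with $r_N(\cY) > a_0$ and $\cY$ being $d$-firm in $N$.

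Finally, apply Lemma~\ref{minorpyramid} to $(M^*, \cS^*; e_1, \dotsc, e_h)$, the minor $N$ of its top $M_h$, and the family $\cY$. This produces a minor $M'$ of $M^*$ (hence of $M$) along with an $(a_0, q+1, h, d')$-pyramid $(M', \cS'; e_1, \dotsc, e_h)$ satisfying $\cY \subseteq \cS' \subseteq \cS^* \subseteq \cS_P$ and $N|\cY = (M' \con \{e_1, \dotsc, e_h\})|\cY$. Since firmness and rank of $\cY$ depend only on $N|\cY$, the set $\cY$ is $d$-firm in $M' \con \{e_1, \dotsc, e_h\}$ with rank exceeding $a_0$; and since $d' \ge d$, the pyramid is also an $(a_0, q+1, h, d)$-pyramid, yielding outcome~(\ref{pff2}). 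The real work is in the third paragraph: balancing $d'$, $h'$ and $t$ so that the exponential-versus-polynomial race built into Lemma~\ref{easycase} is actually won.
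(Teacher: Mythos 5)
Your proposal is correct and follows essentially the same route as the paper: bound the pyramid's rank, truncate to height $h$, use Lemma~\ref{sizepyramid} to get the exponential density of $\cS$ in $M \con \{e_1,\dotsc,e_h\}$, apply Lemma~\ref{easycase} there (choosing the threshold so that $(q+1)^{h'-h}$ beats $r(M_h)^{t}q^{r(M_h)}$), and then recover the pyramid containing the firm set via Lemma~\ref{minorpyramid}. The only cosmetic difference is that the paper uses Lemma~\ref{boundpyramid} to fix $h'$ at exactly $h+h^*$ while you keep $h'$ arbitrary and note the inequality holds for all sufficiently large $h'-h$; both are fine.
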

	\begin{proof}
		Let $a_0,d,n,q$ be integers at least $1$. Let $h^*$  be an integer so that 
		$(q+1)^{h^*} \ge (a_0 + h^*)^{f_{\ref{easycase}}(a_0,d,n,q)}q^{a_0+h^*}$ and 
		$h^* \ge f_{\ref{easycase}}(a_0,d,n,q)$. Set $f_{\ref{pyramidfindfirm}}(a_0,d,n,q,h) = h+h^*$.  
		Now, let $h'$ and $d'$ be integers at least $h+h^*$, and $P = (M,\cS; e_1, \dotsc, e_{h'})$ 
		be an $(a_0,q+1,h',d')$-pyramid on a matroid $M$. We show that $M$ satisfies one of the two outcomes; 
		by Lemma~\ref{boundpyramid}, we may assume that $h' = h + h^*$ and that $r(M) = h+h^*+a_0$. 
		Let $M_h = M \con \{e_1, \dotsc, e_h\}$. 
		
		Now, $r(M_h) = h^* + a_0$, and $Q = (M_h, \cS; e_{h+1}, \dotsc, e_{h+h^*})$ is an 
		$(a_0,q+1,h^*,d')$-pyramid, and by Lemma~\ref{sizepyramid}, 
		$\elem_{M_h}(\cS) = (q+1)^{h^*} \ge (h^* + a_0)^{f_{\ref{easycase}}(a_0,d,n,q)}q^{h^*+a_0} 
		= r(M_h)^{f_{\ref{easycase}}(a_0,d,n,q)}q^{r(M_h)}$. Since 
		$d' \ge h \ge f_{\ref{easycase}}(a_0,d,n,q)$, we can apply Lemma~\ref{easycase} to $\cS$ in $M_h$. 
		We may assume that \ref{easycase}(\ref{ec2}) does not hold, so \ref{easycase}(\ref{ec1}) does; 
		therefore, there is a minor $N$ of $M_h$ and a set $\cY \subseteq \cS \cap \cR_a(N)$ such that 
		$r_N(\cY) > a_0$ and $\cY$ is $d$-firm in $N$. By Lemma~\ref{minorpyramid}, there is an 
		$(a_0,q+1,h,d')$-pyramid $(M', \cS'; e_1, \dotsc, e_h)$ so that $\cY \subseteq \cS'$ and 
		$N|\cY = (M' \con \{e_1, \dotsc, e_h\})|\cY$. Since $d' \ge d$, this gives (\ref{pff2}). 
	\end{proof}
	
	Finally, we combine the lemmas in this section to prove what we want: any $(a,q+1, h,d)$-pyramid 
	for very large $h$ and $d$ contains either a thick minor of rank greater than $a$, or a large projective 
	geometry over a field larger than $\GF(q)$. This tells us that finding such a pyramid is enough to 
	prove Theorem~\ref{halfwaypoint}. 
	
	\begin{lemma}\label{pyramidpromote}
		There is an integer-valued function $f_{\ref{pyramidpromote}}(a,d,n,q)$ so that, for any integers 
		$n,q,a_0,a,d,d^*,h^*$ with $n,q \ge 1$, $d \ge 2$, $1 \le a_0 \le a$, and 
		$\min(h^*,d^*) \ge f_{\ref{pyramidpromote}}(a,d,n,q)$, if $P$ is an $(a_0,q+1,h^*,d^*)$-pyramid 
		on a matroid $M$, then either
		\begin{enumerate}[(i)]
			\item\label{pp1} $M$ has a $\PG(n-1,q')$-minor for some $q' > q$, or
			\item\label{pp2} $M$ has a $d$-thick minor $N$ such that $r(N) > a$. 
		\end{enumerate}
	\end{lemma}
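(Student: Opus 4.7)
The plan is to prove the statement by reverse induction on $a_0 \in \{1, \dotsc, a\}$, using Lemma~\ref{pyramidfindfirm} as the engine that attaches a very firm set to the top of a pyramid, and Lemma~\ref{pyramidbootstrap} to convert such a firm set either into a pyramid with a strictly larger base, or into a firm set of strictly greater rank. The value $f_{\ref{pyramidpromote}}(a,d,n,q)$ is defined to be large enough to dominate, at each level of the reverse induction, the inputs required below.

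For the base case $a_0 = a$, starting from the given $(a,q+1,h^*,d^*)$-pyramid, I apply Lemma~\ref{pyramidfindfirm} with height parameter $0$ and target firmness $d$. Outcome~\ref{pyramidfindfirm}(\ref{pff1}) immediately gives the desired $\PG(n-1,q')$-minor; otherwise outcome~\ref{pyramidfindfirm}(\ref{pff2}) yields, on a minor $M'$ of $M$, an $(a,q+1,0,d)$-pyramid with set system $\cS'$ and a $d$-firm set $\cY \subseteq \cS'$ of rank greater than $a$ in $M'$. Each $Y \in \cY$ lies in $\cS'$ and so is $d$-thick in $M'$, so Lemma~\ref{thicknessfirmness} yields that $\cl_{M'}(\cY)$ is $d$-thick of rank greater than $a$, as required.

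For $a_0 < a$, assuming the result for all larger pyramid bases, I first apply Lemma~\ref{pyramidfindfirm} with a suitably large height $h_1$ and (much larger) target firmness $D_1$ to obtain either the desired $\PG$-minor or a pyramid $(M', \cS'; e_1, \dotsc, e_{h_1})$ on a minor $M'$ of $M$ together with a $D_1$-firm set $\cY_1 \subseteq \cS'$ of rank $a_1 > a_0$ in $M' \con \{e_1, \dotsc, e_{h_1}\}$. If $a_1 > a$, Lemma~\ref{thicknessfirmness} finishes directly. Otherwise I iterate Lemma~\ref{pyramidbootstrap}: at step $k$ I use the shortened pyramid $(M',\cS'; e_1, \dotsc, e_{h_k})$, which is itself an $(a_0,q+1,h_k,D_1)$-pyramid because any initial segment of the level sequence of a pyramid is again a pyramid, together with a firm set $\cY_k \subseteq \cS'$ of rank $a_k$ in $M' \con \{e_1, \dotsc, e_{h_k}\}$, where $a_0 < a_k \le a$. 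Outcome~\ref{pyramidbootstrap}(\ref{pbs2}) produces an $(a_k, q+1, h_k, d_k)$-pyramid on $M'$ with $a_k > a_0$; provided $h_k$ and $d_k$ are at least $f_{\ref{pyramidpromote}}(a,d,n,q)$, the inductive hypothesis applies and we are done. Outcome~\ref{pyramidbootstrap}(\ref{pbs1}) supplies a firm set $\cY_{k+1} \subseteq \cS'$ in an even shorter contraction $M' \con \{e_1, \dotsc, e_{h_{k+1}}\}$ of strictly greater rank $a_{k+1} > a_k$, and I iterate. Because the $a_k$ strictly increase and are capped above by $a$ (at which point Lemma~\ref{thicknessfirmness} finishes), the iteration terminates in at most $a - a_0$ steps.

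The main obstacle is the parameter bookkeeping. Each application of Lemma~\ref{pyramidbootstrap} at height $h_k$ consumes a factor of $(q+2)^{h_k}$ in the firmness exponent, so iterated bootstrapping demands that the initial firmness $D_1$ dominate a tower of up to $a - a_0$ such exponents whose top level is the $f_{\ref{pyramidpromote}}(a,d,n,q)$ needed by the inductive call. One therefore defines $f_{\ref{pyramidpromote}}(a,d,n,q)$ by reverse induction on $a_0$, choosing values that simultaneously meet the input threshold of Lemma~\ref{pyramidfindfirm} and dominate the compounded firmness demands of the iterated bootstrap applications; this is straightforward arithmetic once the qualitative structure above is fixed.
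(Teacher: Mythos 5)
Your overall strategy is the same as the paper's: attach a firm set to the top of a pyramid via Lemma~\ref{pyramidfindfirm}, then use Lemma~\ref{pyramidbootstrap} to either raise the rank of the firm set or upgrade the base rank of the pyramid, and finish with Lemma~\ref{thicknessfirmness} once the rank exceeds $a$. However, there is a genuine gap in your iteration: you have no control over the height of the pyramid that survives a bootstrap step. Outcome~\ref{pyramidbootstrap}(\ref{pbs1}) only guarantees that the new firm set is firm in $M' \con \{e_1,\dotsc,e_i\}$ for \emph{some} $i \in \{1,\dotsc,h_k\}$, and $i$ may be as small as $1$. Since the next application of Lemma~\ref{pyramidbootstrap} requires the firm set to sit on top of (i.e., be firm in the full contraction of) the pyramid being bootstrapped, you are forced to take $h_{k+1}=i$, so the available height can collapse to $1$ after a single step. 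If outcome~\ref{pyramidbootstrap}(\ref{pbs2}) then fires, the upgraded pyramid has that same tiny height; your proviso that $h_k$ and $d_k$ be at least $f_{\ref{pyramidpromote}}(a,d,n,q)$ cannot be guaranteed, and there is no way to recover by re-attaching a firm set, since Lemma~\ref{pyramidfindfirm} itself needs large height. So the difficulty is not, as you claim, ``straightforward arithmetic once the qualitative structure is fixed'': the quantity $h_k$ is simply not determined by any parameters you can choose in advance.

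The paper's proof contains exactly the device that closes this gap: it reserves height before each bootstrap. Its claim is indexed lexicographically by the pair (pyramid base rank $i$, firm-set rank $j$), and the height is split additively, $h^i_j = h^i_{j+1}+h^{i+1}_{i+1}$; Lemma~\ref{pyramidbootstrap} is applied only to the top block of height $h^{i+1}_{i+1}$. Then outcome~(\ref{pbs1}) lands the new firm set somewhere in that top block, and contracting the used-up middle levels (Lemma~\ref{shrinkpyramid}) leaves the untouched bottom block, of prescribed height $h^i_{j+1}$, with the new firm set on top, ready for the recursive call at $(i,j')$; outcome~(\ref{pbs2}) yields an upgraded pyramid of height exactly $h^{i+1}_{i+1}$, chosen in advance to suffice for the recursive call at base rank $j$. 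Your firmness bookkeeping (compounding exponents of the form $(q+2)^{h}$ through at most $a-a_0$ rounds) is fine in principle and matches the paper's $d^i_j$ recursion, but without the height-splitting step the argument as you describe it can fail.
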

	\begin{proof}
		Let $n,q,a_0,a,d$ be integers with $n,q \ge 1$, $d \ge 2$, and $1 \le a_0 \le a$. For each pair of 
		integers $i,j$ with $1 \le i \le j \le a$, recursively define integers $h^i_j$ and $d^i_j$ as follows: 
		($h^i_j$ and $d^i_j$ are well-defined for all $i,j$ in the range, as $h^a_a$ and $d^a_a$ are defined, 
		and the definitions of $h^i_j$ and $d^i_j$ depend only on pairs $(i',j')$ exceeding $(i,j)$ lexicographically)  	
		\[h^i_j =  \begin{cases} f_{\ref{pyramidfindfirm}}(a,d,n,q,0) & \text{   if $j = a$} \\
								\max(h_{i+1}^i,f_{\ref{pyramidfindfirm}}(a,d^i_{i+1},n,q,h^i_{i+1})) & \text{   if $j < a$ and $i = j$}\\
								h_{i+1}^{i+1} + h^{i}_{j+1} &\text{   if $1 \le i < j < a$}
					\end{cases}\]
		\[d^i_j = \begin{cases}	\max(d,f_{\ref{pyramidfindfirm}}(a,d,n,q,0)) & \text{   if $j = a$} \\
								\max(d_{i+1}^i,f_{\ref{pyramidfindfirm}}(a,d^i_{i+1},n,q,h^i_{i+1})) & \text{   if $j < a$ and $i = j$}\\
								(\max(d^{i+1}_{i+1},d^{i}_{j+1}))^{(q+2)^{\left(h^{i+1}_{i+1}\right)}} &\text{   if $1 \le i < j < a$} 	
				  \end{cases}					
		\]
		Note that if $(i,j)$ exceeds $(i',j')$ lexicographically, then $h^i_j \le h^{i'}_{j'}$ and 
		$d \le d^i_j \le d^{i'}_{j'}$. We set $f_{\ref{pyramidpromote}}(a,d,n,q) = \max(h^1_1, d^1_1)$. The 
		lemma will follow from a technical claim:
		
		\begin{claim}
			Let $1 \le i \le j \le a$, and $d^{*} \ge d^i_j$ and $h^{*} \ge h^i_j$ be integers. 
			If $P = (M,\cS; e_1, \dotsc, e_{h^*})$ is an $(i,q+1,h^*,d^*)$-pyramid, and $\cX \subseteq \cS$ is 
			$d^*$-firm in $M \con \{e_1, \dotsc, e_{h}\}$ and satisfies $r_{M \con \{e_1, \dotsc, e_{h^*}\}}(\cX) = j$, 
			then (\ref{pp1}) or (\ref{pp2}) holds for $M$. 
		\end{claim}
		\begin{proof}[Proof of claim:]
			By Lemma~\ref{boundpyramid}, we may assume that $h^* = h^i_j$. If $j = a$, then 
			$h^i_j$ and $d^i_j$ are both at least $f_{\ref{pyramidfindfirm}}(a,d,n,q,0)$; we can therefore apply Lemma~\ref{pyramidfindfirm} 
			to $P$. Outcome \ref{pyramidfindfirm}(\ref{pff1}) gives (\ref{pp1}), and applying 
			Lemma~\ref{thicknessfirmness} to the $\cX$ and $M'$ given by \ref{pyramidfindfirm}(\ref{pff2}) 
			gives (\ref{pp2}). Suppose inductively that $1 \le i \le j < a$, and that the claim holds for 
			all $(i',j')$ lexicographically greater than $(i,j)$.
			
			If $j = i$, then let $h' = h_{i+1}^i$. By Lemma~\ref{pyramidfindfirm}, there is a minor $M'$ of $M$, an 
			$(i,q+1,h',d^i_{i+1})$-pyramid $(M',\cS'; e_1, \dotsc, e_{h'})$, and a set 
			$\cX' \subseteq \cS'$ so that $\cX'$ is $d^i_{i+1}$-firm in $M' \con \{e_1, \dotsc, e_{h'}\}$ 
			and $r_{M' \con \{e_1, \dotsc, e_{h'}\}}(\cX') \ge i+1$. Let $i' = r_{M'}(\cX)$. If $i' > a$, then outcome (\ref{pp2}) holds by Lemma~\ref{thicknessfirmness}. Otherwise, since $h' = h^i_{i+1} \ge h^i_{i'}$ and $d^i_{i+1} \ge d^i_{i'}$, 
			the lemma follows from the inductive hypothesis. 
			
			We may now assume that $1 \le i < j < a$. For each $k \in \{0,\dotsc,h\}$, write $M_k$ for
			$M \con \{e_1, \dotsc, e_k\}$.  Now $h^* = h^i_j = h^i_{j+1} + h^{i+1}_{i+1}$; let 
			$h' = h^i_{j+1}$ and $h'' = h^{i+1}_{i+1}$. By Lemma~\ref{shrinkpyramid}, 
			$P' = (M_{h'}, \cS; e_{h'+1}, \dotsc, e_{h' + h''})$ is an $(i,q+1,h'',d^i_j)$-pyramid, 
			and $\cX$ is $d^i_j$-firm in $M_h = M_{h'} \con \{e_{h'+1}, \dotsc, e_h\}$. Since
			$d^* \ge (\max(d^{i+1}_{i+1},d^i_{j+1}))^{(q+2)^{h''}}$, we can apply Lemma~\ref{pyramidbootstrap} to $P'$. 
 
 			If \ref{pyramidbootstrap}(\ref{pbs1}) holds for $P'$, then there is an $\ell \in \{1, \dotsc, h''\}$ and a set 
 			$\cY \subseteq \cS$ that is $d^i_{j+1}$-firm in 
 			$M_{h'} \con \{e_{h'+1}, \dotsc, e_{h'+\ell}\} = M_{h' + \ell}$ and satisfies $r_{M_{h'+\ell}}(\cY) > j$; 
 			let $j ' = r_{M_{h'+\ell}}(\cY)$. If $j' > a$, then Lemma~\ref{thicknessfirmness} gives outcome (\ref{pp2}). Otherwise 
 			$(M \con \{e_{h'+1}, \dotsc, e_{h'+\ell}\}, \cS; e_1, \dotsc, e_{h'})$ is an $(i,q+1,h',d)$-pyramid by Lemma~\ref{shrinkpyramid}, 
 			and since $d^* \ge d^i_{j+1} \ge d^i_{j'}$ and $h' = h^i_{j+1} \ge h^{i}_{j'}$, this pyramid and the 
 			set $\cY$ satisfy the hypotheses of the claim for $(i,j')$. The claim follows by induction. 
			
			If \ref{pyramidbootstrap}(\ref{pbs2}) holds for $P$, then there is a $(j,q+1,h'',d^{i+1}_{i+1})$-pyramid 
			$Q$ on $M_{h'}$. We have $h'' = h^{i+1}_{i+1} \ge h^{j}_{j}$, and for any $X \in \cS_Q$ the set $\{X\}$ 
			is trivially $d^{j}_{j}$-firm in $M_{h'}$, so $Q$ and $\{X\}$ satisfy the hypotheses of the claim for 
			$(j+1,j+1)$. Again, the claim follows inductively. 
			
		\end{proof}
		Let $h^*$ and $d^*$ be integers with $\min(h^*,d^*) \ge f_{\ref{pyramidpromote}}(a,d,n,q)$, and 
		$P = (M, \cS; e_1, \dotsc, e_{h^*})$ be an $(a_0,q+1,h^*,d^*)$-pyramid. For any $X \in \cS_P$, the 
		set $\{X\}$ is $d^*$-firm in $M \con \{e_1, \dotsc, e_{h^*}\}$, and 
		$d^* \ge f_{\ref{pyramidpromote}}(a,d,n,q) \ge d^1_1 \ge d^{a_0}_{a_0}$. Moreover, 
		$h^* \ge f_{\ref{pyramidpromote}}(a,d,n,q) \ge h^1_1 \ge h^{a_0}_{a_0}$, so the lemma follows 
		by applying the claim to $P$ and $\{X\}$. 
	\end{proof}
	
	\section{The Main Theorems}

	We are now able to prove Theorem~\ref{halfwaypoint}, which we restate here for convenience:
	 
	\begin{theorem}\label{halfwaypointrep}
		There is an integer-valued function $f_{\ref{halfwaypointrep}}(a,b,n,q)$ so that, for any integers 
		$1 \le a < b$, $q \ge 1$ and $n \ge 1$, if $M \in \cU(a,b)$ satisfies $r(M) > 1$ and  
		$\tau_a(M) \ge r(M)^{f_{\ref{halfwaypointrep}}(a,b,n,q)}q^{r(M)}$, then $M$ has a $\PG(n-1,q')$-minor 
		for some prime power $q' > q$.
	\end{theorem}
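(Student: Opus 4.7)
The proof plan is to combine Corollary~\ref{halfwayreduction} and Lemma~\ref{pyramidpromote}, using the $U_{a+1,b}$-exclusion via Lemma~\ref{thickuniform} to rule out the ``thick rank-$>a$ minor'' alternative in the second lemma. Setting aside constants for a moment, the path is: a dense matroid in $\cU(a,b)$ produces a pyramid of large height and thickness, which then either yields the desired projective geometry or a thick minor of high rank, the latter being forbidden by hypothesis.

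Concretely, set $d_0 = \binom{b}{a}$ and $D = f_{\ref{pyramidpromote}}(a,d_0,n,q)$, and then define
\[
f_{\ref{halfwaypointrep}}(a,b,n,q) \;=\; f_{\ref{halfwayreduction}}(a,b,D,D).
\]
Given $M \in \cU(a,b)$ with $r(M) > 1$ and $\tau_a(M) \ge r(M)^{f_{\ref{halfwaypointrep}}(a,b,n,q)} q^{r(M)}$, apply Corollary~\ref{halfwayreduction} with parameters $h = d = D$. This produces, for some $a_0 \in \{1,\dots,a\}$, an $(a_0, q+1, D, D)$-pyramid minor $P$ of $M$. Since $\min(D,D) \ge f_{\ref{pyramidpromote}}(a,d_0,n,q)$, Lemma~\ref{pyramidpromote} applies to $P$ and gives one of two conclusions: either (i) $M$ has a $\PG(n-1,q')$-minor for some prime power $q' > q$, which is exactly what we want, or (ii) $M$ has a $d_0$-thick minor $N$ of rank greater than $a$. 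In case (ii), Lemma~\ref{thickuniform} yields a $U_{a+1,b}$-minor of $N$, and hence of $M$, contradicting $M \in \cU(a,b)$. Thus (i) must hold.

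The only substantive step is verifying that the numerical choices line up. The key point is that Lemma~\ref{pyramidpromote} accepts $d_0$ as the thickness one wishes to extract in case (ii); because our exclusion $U_{a+1,b}$ is compatible with $d_0 = \binom{b}{a}$ via Lemma~\ref{thickuniform}, that case gets killed cleanly and we do not have to chase a larger thickness parameter through the bookkeeping. Similarly, Corollary~\ref{halfwayreduction} lets us prescribe both $h$ and $d$ for the pyramid it outputs, so matching its output to the hypothesis of Lemma~\ref{pyramidpromote} is just a matter of taking $h = d = D$.

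The main obstacle, then, is not conceptual but rather confirming that nothing has been slipped between the two major results: one should check that the pyramid produced by Corollary~\ref{halfwayreduction} is literally the input required by Lemma~\ref{pyramidpromote} (same $a_0 \le a$, same $q+1$ branching, large enough $h$ and $d$), and that case (ii) of Lemma~\ref{pyramidpromote} yields a minor of $M$ (not of some auxiliary structure) so that the $U_{a+1,b}$-minor produced by Lemma~\ref{thickuniform} genuinely contradicts $M \in \cU(a,b)$. Both are immediate from the statements, so the proof reduces to the short chain of implications above.
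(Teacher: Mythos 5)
Your proposal is correct and follows essentially the same route as the paper: apply Corollary~\ref{halfwayreduction} with $h=d=f_{\ref{pyramidpromote}}(a,\binom{b}{a},n,q)$ to get an $(a_0,q+1,h,h)$-pyramid minor, then Lemma~\ref{pyramidpromote}, with Lemma~\ref{thickuniform} eliminating the thick rank-$(>a)$ outcome because $M \in \cU(a,b)$. The only (minor) difference is that the paper additionally pads the exponent so that $r^p \ge \binom{b}{a}^r$ for all $2 \le r < p$ and uses Theorem~\ref{kdensity} to rule out the small-rank case before invoking the reduction, a safeguard you skip by taking the stated hypothesis $r(M)>1$ of Corollary~\ref{halfwayreduction} at face value.
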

	\begin{proof}
		Let $a,b,n,q$ be integers with $n,q \ge 1$ and $1 \le a < b$. Let $d = \binom{b}{a}$, and 
		$h = f_{\ref{pyramidpromote}}(a,d,n,q)$.  Set $f_{\ref{halfwaypointrep}}(a,b,n,q)$ to be an integer 
		$p$ such that $p \ge f_{\ref{halfwayreduction}}(a,b,h,h)$, and so that $r^p \ge d^r$ for all $r$ 
		such that $2 \le r < p$. 
		
		Let $M \in \cU(a,b)$ satisfy $r(M) > 1$ and $\tau_a(M) \ge r(M)^pq^{r(M)}$; we show that 
		$M$ has a $\PG(n-1,q')$-minor for some $q' > q$.  If $r(M) < p$, then by Theorem~\ref{kdensity}, 
		$\tau_a(M) \le \binom{b-1}{a}^{r(M)} < d^{r(M)} \le r(M)^p$, a contradiction. So we may assume that 
		$r(M) \ge p$. By Lemma~\ref{halfwayreduction}, $M$ has an $(a_0,q+1,h,h)$-pyramid minor for some 
		$1 \le a_0 \le a$. By Lemma~\ref{pyramidpromote}, $M$ either has a $\PG(n-1,q')$-minor for some 
		$q' > q$, giving the theorem, or a $d$-thick minor of rank greater than $a$, in which case a 
		contradiction follows from Lemma~\ref{thickuniform}. 
	\end{proof}
	
	We now derive Theorem~\ref{mainnice}, which we also restate, as a consequence:
	
	\begin{theorem}\label{mainnicerep}
		Let $a \ge 1$ be an integer. If $\cM$ is a minor-closed class of matroids, then either
		\begin{enumerate}
			\item\label{mri} $\tau_a(M) \le r(M)^{n_{\cM}}$ for all $M \in \cM$, or
			\item\label{mrii} there is a prime power $q$ so that $\tau_a(M) \le r(M)^{n_{\cM}}q^{r(M)}$ 
				for all $M \in \cM$, and $\cM$ contains all $\GF(q)$-representable matroids, or
			\item\label{mriii} $\cM$ contains all rank-$(a+1)$ uniform matroids. 
		\end{enumerate}
	\end{theorem}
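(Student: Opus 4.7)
The plan is to derive Theorem~\ref{mainnicerep} directly from Theorem~\ref{halfwaypointrep} via a case analysis on which projective geometries appear in $\cM$. If $\cM$ contains every rank-$(a+1)$ uniform matroid then outcome~(\ref{mriii}) holds, so I may assume there exists an integer $b > a$ with $U_{a+1,b} \notin \cM$; then $\cM \subseteq \cU(a,b)$.

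Define $Q^* = \{q \text{ a prime power} : \PG(n-1,q) \in \cM \text{ for every } n\}$. I first observe that $Q^*$ is finite. For any prime power $q \ge b-1$, the matroid $U_{a+1,b}$ is a restriction of $\PG(a,q)$ (take $b$ points of a normal rational curve in $\GF(q)^{a+1}$), so $\PG(a,q) \notin \cU(a,b) \supseteq \cM$, and hence $Q^* \subseteq \{2,3,\dotsc,b-2\}$. Moreover, since every $\GF(q)$-representable matroid is a minor of $\PG(m-1,q)$ for some $m$, each $q \in Q^*$ has the property that $\cM$ contains all $\GF(q)$-representable matroids.

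Set $q^* = \max Q^*$ if $Q^* \ne \varnothing$, and $q^* = 1$ otherwise. For each prime power $q'$ with $q^* < q' \le b-2$, the definition of $q^*$ together with minor-closure of $\cM$ produces an integer $n_{q'}$ so that $\PG(n-1,q') \notin \cM$ for every $n \ge n_{q'}$. Let $N = \max\bigl(a+1,\,\max_{q^* < q' \le b-2} n_{q'}\bigr)$. Then $\PG(N-1,q') \notin \cM$ for every prime power $q' > q^*$: for $q' \le b-2$ by choice of $N$, and for $q' \ge b-1$ by the normal-rational-curve argument combined with $N \ge a+1$. Now set $n_\cM = f_{\ref{halfwaypointrep}}(a,b,N,q^*)$. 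If some $M \in \cM$ with $r(M) > 1$ satisfied $\tau_a(M) \ge r(M)^{n_\cM}(q^*)^{r(M)}$, Theorem~\ref{halfwaypointrep} would furnish a $\PG(N-1,q')$-minor of $M$ for some prime power $q' > q^*$, contradicting minor-closure of $\cM$ and the choice of $N$. Thus $\tau_a(M) \le r(M)^{n_\cM}(q^*)^{r(M)}$ for all $M \in \cM$ with $r(M) > 1$, and matroids of smaller rank are handled trivially by enlarging $n_\cM$ if necessary. When $q^* = 1$ this is outcome~(\ref{mri}); when $q^* \ge 2$ this is outcome~(\ref{mrii}).

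The substance of Theorem~\ref{mainnicerep} is entirely contained in Theorem~\ref{halfwaypointrep}; the present argument is really a bookkeeping exercise. Its only non-trivial ingredients are two classical facts about projective geometries: that $U_{a+1,b}$ is $\GF(q)$-representable for all prime powers $q \ge b-1$ via normal rational curves, and that every $\GF(q)$-representable matroid arises as a minor of some $\PG(m-1,q)$. Neither presents a real obstacle.
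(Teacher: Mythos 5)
Your proposal is correct and takes essentially the same route as the paper: both reduce the statement to Theorem~\ref{halfwaypointrep} and then do bookkeeping on which projective geometries lie in $\cM$, using the $\GF(q)$-representability of $U_{a+1,b}$ for large $q$ to cap the relevant prime powers below $b$. Your choice of $q^*$ as the maximum of $Q^*$ (with the explicit finiteness argument producing a uniform $N$) is just the mirror image of the paper's minimal choice of $q_0$ with the exclusion property, and yields the same constant and the same two outcomes.
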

	\begin{proof}
		We may assume that (\ref{mriii}) does not hold, so there is some $b$ such that $\cM \subseteq \cU(a,b)$. 
		Moreover, the uniform matroid $U_{a+1,b}$ is simple and $\GF(q)$-representable for all 
		$q \ge b$ (see [\ref{hirschfeld}]), so $\PG(a,q) \notin \cM$ for all $q \ge b$. Therefore, 
		there is some $q_0 < b$ and some $n_0 > a$ such that $\PG(n_0-1,q) \notin \cM$ for all 
		$q > q_0$; choose $q_0$ to be minimal such that $q_0$ is either $1$ or a prime power, 
		and this $n_0$ exists. 
		
		By choice of $q_0$, we have $\tau_a(M) \le r(M)^{f_{\ref{halfwaypointrep}}(a,b,n_0,q_0)}q_0^{r(M)}$ 
		for all $M \in \cM$ by Theorem~\ref{halfwaypointrep}. If $q_0 = 1$ then this gives (\ref{mri}), 
		and if $q_0$ is a prime power greater than $1$, then minimality of $q_0$ implies that 
		$\PG(n-1,q_0) \in \cM$ for all $n \ge 1$, giving (\ref{mrii}). 
	\end{proof}

	\section*{References}
\newcounter{refs}

		\begin{list}{[\arabic{refs}]}
{\usecounter{refs}\setlength{\leftmargin}{10mm}\setlength{\itemsep}{0mm}}

\item\label{openprobs}
J. Geelen, 
Some open problems on excluding a uniform matroid, 
Adv. in Appl. Math. 41(4) (2008), 628--637.

\item\label{gk}
J. Geelen, K. Kabell,
Projective geometries in dense matroids, 
J. Combin. Theory Ser. B 99 (2009), 1--8.

\item\label{gkb}
J. Geelen, K. Kabell, 
The {E}rd{\H o}s-{P}\'osa property for matroid circuits,
J. Combin. Theory Ser. B 99 (2009), 407--419.  

\item\label{gkw}
J. Geelen, J.P.S. Kung, G. Whittle,
Growth rates of minor-closed classes of matroids,
J. Combin. Theory. Ser. B 99 (2009), 420--427.

\item\label{gw}
J. Geelen, G. Whittle,
Cliques in dense $\GF(q)$-representable matroids, 
J. Combin. Theory. Ser. B 87 (2003), 264--269.

\item\label{hirschfeld}
J. W. P. Hirschfeld,
Complete Arcs, 
Discrete Math. 174(1-3):177--184 (1997),
Combinatorics (Rome and Montesilvano, 1994).

\item\label{part2}
P. Nelson, 
Projective geometries in exponentially dense matroids. II, 
In preparation.

\item\label{thesis}
P. Nelson,
Exponentially Dense Matroids,
Ph.D thesis, University of Waterloo (2011). 

\item \label{oxley}
J. G. Oxley, 
Matroid Theory,
Oxford University Press, New York (2011).
\end{list}		
			\end{document}